\newtheorem{thm}{Theorem}
\newtheorem{lemma}{Lemma}
\theoremstyle{remark}
\newtheorem{example}{Example}
\newtheorem{remark}{Remark}
\DeclareMathOperator{\supp}{supp}
\DeclareMathOperator{\WF}{WF}
\newcommand{\eps}{\varepsilon}
\newcommand{\R}{{\bf R}} 
\newcommand{\Id}{\mbox{Id}} 
\renewcommand{\r}[1]{(\ref{#1})} 
\newcommand{\PDO}{$\Psi$DO} 
\newcommand{\be}[1]{\begin{equation}\label{#1}} 
\newcommand{\ee}{\end{equation}} 
\renewcommand{\d}{\mathrm{d}}
\renewcommand{\i}{\mathrm{i}} 
\newcommand{\bo}{{\partial M}}
\renewcommand{\L}{\mathcal{L}}
\title[The Dirichlet-to-Neumann map on Lorentzian manifolds]{The inverse problem for the Dirichlet-to-Neumann map\\ on Lorentzian manifolds}
\author[P. Stefanov]{Plamen Stefanov}
\address{Department of Mathematics, Purdue University, West Lafayette, IN 47907}
\thanks{First author partly supported by  NSF  Grant DMS--1301646 and DMS--1600327}
\author[Yang Yang]{Yang Yang}
\address{Department of Mathematics, Purdue University, West Lafayette, IN 47907}
\begin{document}

\begin{abstract}
We  consider the Dirichlet-to-Neumann map $\Lambda$ on a cylinder-like Lorentzian manifold related to the wave equation related to the metric $g$, a magnetic field $A$ and a potential $q$. We show that we can recover the jet of $g,A,q$ on the boundary from $\Lambda$ up to a gauge transformation in a stable way. We also show that $\Lambda$ recovers the following three invariants in a stable way: the lens relation of $g$, and the light ray transforms of $A$ and $q$. Moreover, $\Lambda$ is an FIO away from the diagonal with a  canonical relation  given by  the lens relation.  We present applications for recovery of $A$ and $q$ in a logarithmically stable way in the Minkowski case, and uniqueness with partial data. 
\end{abstract}

\maketitle

\section{Introduction and main results}\label{sec_Intro}
Let $(M,g)$ be a Lorentzian manifold of dimension $1+n$, $n\ge2$, i.e., $g$ is a metric with signature   $(-1,1,\dots,1)$. Suppose a part of  $\partial M$ is timelike. An example of $M$ is a cylinder-like domain representing a moving and shape changing compact manifold in the $x$-space (if we have fixed time and space variables) with the requirement that the normal speed of the boundary is less than one, see section~\ref{sec_ex}.


Denote the wave operator by $\Box_g$; in local coordinates $x=(x^0,\dots,x^n)$ it takes the form:
$$
\Box_g := \frac{1}{\sqrt{|\det g|}} \partial_j \left( \sqrt{|\det g|} g^{jk} \partial_k \right).
$$
Consider the following operator $P=P_{g,A,q}$ which is a first order perturbation of $\Box_g$:
\begin{equation} \label{P}
P=P_{g,A,q}:= \frac{1}{\sqrt{|\det g|}}\left(\partial_j - i A_j\right) \sqrt{|\det g|} g^{jk} \left( \partial_k -i A_k \right)  + q.
\end{equation}
Here $i=\sqrt{-1}$; $A$ is a smooth $1$-form on $M$; $q$ is a smooth function on $M$. 

The goal of this work is to study  the inverse problem of recovery of $g$, $A$ and $q$, up to a data preserving gauge transformation,  from the outgoing  Dirichlet-to-Neumann (DN) $\Lambda$ map on a timelike boundary associated with the wave equation
\be{we}
Pu=0\quad\text{in $M$}.
\ee
We are motivated by applications in relativity but also in applications to classical wave propagation problems with media moving and/or changing at a speed not negligible compared to the wave speed.   
We are  interested in possible stability results even though some steps in the recovery are inherently unstable. This problem remains widely open. The results we prove are the following. First, we show that one can recover the jet of $g,A,q$ at  the boundary (up to a gauge transform) in a H\"older stable way. Next, we show that  one can extract the natural geometric invariants of $g,A,q$ from $\Lambda$ in a H\"older stable way. More precisely, $\Lambda$ recovers the lens relation $\mathcal{L}$  related to $g$, in stable way. If we know $g$,   the light ray transform $L_1A$ of $A$ is recovered stably. If $g$ and $A$ are known, the light ray transform  $L_0q$ of  $q$ is recovered stably. The lens relation $\mathcal{L}$ is the canonical relation of the Fourier Integral Operator (FIO) $\mathcal{L}$ away from the diagonal, and the light ray transforms $L_1A$ and $L_0q$ are  in fact encoded in the principal and the subprincipal symbol of it.
 In fact, $\mathcal{L}$ is directly measurable from $\Lambda$.

Since the results we prove are local or semilocal (near a fixed lightlike geodesic); and the proofs are microlocal, we  do not formulate a global mixed problem for the wave equation at the beginning but we do consider one in section~\ref{sec_ex}. In fact,  existence of solutions of such problems depend on global properties of $(M,g)$, one of them is global hyperbolicity,  which are not needed for our weaker formulation and for the proofs. Instead, we  define the DN map up to smoothing operators only. In case when one can prove the existence of a global solution, the true DN map would coincide with ours up to a smoothing error, see section~\ref{sec_ex}; and our results are not affected by adding smoothing operators. 

This problem has a long history in  the stationary Riemannian setting, i.e., when $M=[0,T]\times M_0$, where $(M_0,g)$ is a compact Riemannian manifold with boundary, and the metric is $-\d t^2+g_{ij}(x) \d x^i \d x^j$. The boundary control method \cite{Belishev_87} and the Tataru's uniqueness continuation theorem \cite{tataru95,Tataru99} provide uniqueness provided that $T$ is greater than a certain sharp critical value $T$, as shown by Belishev and Kurylev in \cite{BelishevK92}, see also the survey \cite{belishev_2007}.  Stability however does not follow from such arguments. Stability results for recovering of the metric and lower order terms appeared in \cite{SU-IMRN,SU-JFA, Carlos_12,BellassouedDSF,BaoZhang}, with \cite{Carlos_12} covering the general case. A main assumption in those works is that the metric is simple, i.e., that there are no conjugate points and the boundary is strictly convex (not so essential assumption) and the main technical tool for recovery of the metric is to reduce it to stability for  the boundary/lens rigidity problem, see, e.g., \cite{SU-JAMS}. 
For related results, we refer to \cite{IsakovSun92, Sun_90}. Recently, the progress in treating the local rigidity problem allowed results under the more general foliation condition \cite{SUV-DNmap2014} which allows  conjugate points. In any case, some condition is believed to be necessary for stability. It is worth noticing that all inverse (hyperbolic) \textit{scattering} problems for compactly supported perturbations are equivalent to inverse DN map problems.

Recently, there has been  increased interest in this problem or in related inverse scattering problems in time-space. Recovery of lower order time-dependent terms for the Minkowski metric has been studied in  \cite{MR1004174,Ramm-Sj, Ramm_Rakesh_91,waters2014stable, Salazar_13, Aicha_15, Bellassoued_BA_16}, and for $-\d t^2+g_{ij}(x)\d x^i \d x^j$ in \cite{Kian_Oksanen_16}. In \cite{Eskin2015}, Eskin proved that one can recover $g,A,q$ up to a gauge transformation, assuming existence of a global time variable $t$ and  analyticity of all coefficients with respect to it. The proof is based in an adaptation of the boundary control methods and the analyticity is needed so that one can still use the unique continuation results in \cite{Tataru99}. Stability does not follow from such arguments. 
 Other inverse problems on Lorentzian manifolds are studied in \cite{KLU-Einstein-I,KLU-2015,  LassasUW_2016}. The inverse scattering problem of recovery a moving boundary is studied in \cite{CooperS84, MR1115177, EskinR10}. 
 The first author showed in \cite{MR1004174} that in the case of $g$ Minkowski and $A=0$, the problem of recovery of $q$ reduces to the inversion of the X-ray transform in time-space over light rays, which was shown there to be injective for functions tempered in time and uniformly compactly supported in space.   In \cite{LOSU-strings}, it is shown that the linearized metric problem leads to the inversion of a light ray transform of tensor fields. Such light ray transforms are inherently unstable however because they are smoothing on the time-like cone. They  require specialized tools for analyzing the singularities near the lightlike cone, not fully developed in the geodesic case, see  \cite{Greenleaf-Uhlmann, Greenleaf_Uhlmann90, Greenleaf_UhlmannCM}. The light ray transform has been also studied in \cite{BQ, Begmatov01, S-support2014, Kian2016}.

We describe the main results below. 
Let $x_0\in\bo$ and assume that $\bo$ is timelike near $x_0$. Then $\bo$ with the induced metric is a Lorentzian manifold as well and we choose (locally) one of the two time orientations that we call future pointing.

Let $f\in \mathcal{E}'(\bo)$ be supported near $x_0$ with $\WF(f)$ close to a fixed timelike $(x_0,\xi^0{}')\in T^*\bo\setminus 0$.
We define the \textit{local outgoing} solution operator $f\mapsto u$, defined up to a smoothing operator, as the operator mapping $f$ to the \textit{outgoing} solution $u$ of 
\be{ibvp}
Pu\in C^\infty \quad\text{in $M$ near $x_0$}, \qquad u|_{\bo}=f\quad \text{mod $C^\infty$}.
\ee
The term ``outgoing'' here refers to the following. We chose that microlocal solution (parametrix)
 for which the singularities of the solution are required to propagate along future pointing bicharacteristics. 
We refer to section~\ref{sec_not} for more details.  On the other hand, it is ``local'' because it solves \r{ibvp} near $x_0$ only and this keeps the singularities close enough to $\bo$ without allowing them to hit $\bo$ again.

Define the associated \textit{local outgoing} Dirichlet-to-Neumann map as
\be{DN}
\Lambda_{g,A,q}^{\rm loc} f = \left(\partial_\nu u-i\langle A,\nu \rangle u \right)|_{\partial M}, 
\ee
where $\nu$ denotes the unit outer normal vector field to $\partial M$, and the equality is modulo smoothing operators applied to $f$.  
By definition, the $\Lambda_{g,A,q}^{\rm loc}$ is defined near $x_0$ only, and in fact, in some conic neighborhood of the timelike $(x_0,\xi^0{}')$. Since the latter is arbitrary, $\Lambda_{g,A,q}^{\rm loc}$ extends naturally to the whole timelike cone on $\bo$ but we keep it microlocalized near $(x_0,\xi^0{}')$ to emphasize what we can recover given microlocal data only.

 As we  show in Theorem~\ref{thm1a}, $\Lambda_{g,A,q}^{\rm loc}$ is actually a \PDO\ on the timelike cone bundle near $x_0$. The main result about $\Lambda_{g,A,q}^{\rm loc}$ is Theorem~\ref{bdrystability}: a stability  estimate about the recovery of the boundary jets of the coefficients. 

Let $f\in \mathcal{E}'(\bo)$ have  $\WF(f)$ as above. Let $u$ as before be the parametrix in a neighborhood of the future pointing null  bicharacteristic issued from the unique future pointing lightlike covector $(x_0,\xi^0)\in T^*M \setminus 0$ with orthogonal projection $(x_0,\xi^0{}')$. Note that the direction of  $(x_0,\xi^0)$ and that of the bicharacteristic might be the same or opposite.  
Assume that this bicharacteristics hits $\bo$ again, transversely, at point $y_0$ in the codirection $\eta^0$ and let $ \eta^0{}'$ be the corresponding orthogonal tangential projection on $T_{y_0}^*\bo$. Then $(y_0,\eta^0{}')$ is timelike, as well. Let $\mathcal{U} $ and $\mathcal{V} $ be two small conic  timelike neighborhoods in $T^*\bo\setminus 0$ of $(x_0,\xi^0{}')$ and $(y_0, \eta^0{}')$, respectively. If $\mathcal{U} $ is small enough, for every timelike $(x,\xi')\in \mathcal{U} $ close to $(x_0,\xi^0{}')$, we can define $(y,\eta')$ in the same way. 
This defines the \textit{lens relation} 
\be{L}
\L : \mathcal{U} \longrightarrow \mathcal{V}, \quad \L(x,\xi')= (y,\eta'),
\ee
see Figure~\ref{DN_Lorentz_fig1}. 
By definition,  $L$ is an even map in the second variable, i.e., $L(x,-\xi') = (y,-\eta')$. If $(x,\xi')$ is future pointing (i.e., if the associated vector by the metric is such), then $(x,-\xi')$ is past-pointing but we can interpret $(y,-\eta')$ as the end point of the null geodesic with initial point projecting to $(y,-\eta')$ but moving ``backward'' w.r.t.\ the parameter over it. This property  correlates well with Theorem~\ref{thm1b} since the wave equation has two wave ``speeds'' of opposite signs. 

The map $\L$ is positively homogeneous of order one in its second variable. Now, for $f$ as above, let $u$ be the outgoing solution to \r{ibvp} near the bicharacteristic issued from $(x_0,\xi^0)$ all the way to its second contact with $\bo$ at $y_0$. At this point, we assume that $(x_0,\xi^0{}')$ is not a fixed point for $\L$, which means that the reflected bicharacteristic does not become a periodic one after the first reflection. 
Since $f$ is smooth near $(y_0,\eta^0{}')$ that means no singularity of the  solution $u$ at $(y_0, \eta^0{}')$, therefore, the singularity reflects at $y_0$. We extend the solution microlocally over a small segment of the reflected ray before reaching $\bo$ again. Then we define the \textit{global DN map}  $\Lambda_{g,A,q}^{\rm gl}$  by \r{DN} again but with the r.h.s.\ localized  to $V$, the projection of $\mathcal{V}$ to the base.  In fact, by propagation of singularities,  $\Lambda_{g,A,q}^{\rm gl}f$ has a wave front set in $\mathcal{V}$ only and we can cut smoothly outside some neighborhood of $y_0$.   The map $\Lambda_{g,A,q}^{\rm gl}$ is actually just semi-global because it is the DN map restricted to a solution near one geodesic segment connecting boundary points. In Theorem~\ref{thm1b}, we prove that $\Lambda_{g,A,q}^{\rm gl}$ is an FIO associated with the graph of $\mathcal{L}$. In Theorem~\ref{thm_LR_stab}, we show that $\Lambda_{g,A,q}^{\rm gl}$ recovers $\mathcal{L}$ in a stable way, which is also a general property of FIOs associated to a local canonical diffeomorphism.

Another fundamental object is the \textit{light ray transform} $L$ which integrates functions or more generally tensor fields along lightlike geodesics. We define $L$ on functions by
\be{L1} 
L_0f(\gamma) =\int f(\gamma(s))\,\d s,
\ee
and on covector fields of order one by
\be{L2} 
L_1f(\gamma) =\int \langle f(\gamma(s)),\dot\gamma(s)\rangle \,\d s,
\ee
where  $\langle f(\gamma(s)),\dot\gamma(s)\rangle= f_j(\gamma(s))\dot\gamma^j(s)$ in local coordinates and $\gamma$ runs over a give set of lighlike geodesics, and we always assume that $\supp f$ is such that the integral is taken over a finite interval. In out results below, $\gamma$'s in $L_0$ and $L_1$ are the maximal geodesics through $M$ connecting boundary points.  Unlike the Riemannian case, lightlike geodesics do not have a natural speed one parameterization and every rescaling of the parameter along them (even if that rescaling changes from geodesic to geodesic) keeps them being lightlike. 
The transform $L_1$ is invariant under reparameterization of the geodesics and can be considered as an integral of $\langle f,\d\gamma\rangle$ over the geodesics. On the other hand, $L_0$ is not. Despite that freedom, the property $L_0f=0$  does not change. One way to parameterize it is to define it locally near a lightlike geodesic hitting a timelike surface at $s=0$, in our case, $\bo$. Then the orthogonal projection $\dot\gamma'(0)$ of each such $\gamma$ on $T\bo$ (the prime stands for projection) determines $\dot\gamma(0)$ and therefore, $\gamma$ uniquely. 
To  normalize the projections on $T\bo$, we can choose a timelike covector field $Z$ on $T\bo$ locally and  require  $g(\dot\gamma, Z)=\mp1$ for future/past pointing directions. 

In Theorem~\ref{intstability}, we show that given $g$, one can recover $L_1A$ in a H\"older stable way; and if we are given $g$, $A$, one can recover $L_0q$ in  a H\"older stable way.  Notice that we do not require absence of conjugate points and we do not use Gaussian beams. Instead, we use standard microlocal tools including Egorov's theorem.  In section~\ref{sec_ex}, we consider some cases where $L_1$ and $L_0$ can be inverted to derive uniqueness results. As we mentioned above, those transforms are unstable. The reason is that  they are microlocally smoothing in the spacelike cone, see, e.g., \cite{Greenleaf_UhlmannCM, S-support2014, LOSU-strings}. Therefore, stable recovery of $L_1A$ and $L_0q$ does not imply H\"older stable recovery of $A_1$ (up to a gauge transform) and $q$ but allows for weaker logarithmic estimates using the estimate for recovery of $q$ from $L_0q$ in the Minkowski case proven in \cite{Begmatov01}, for example. We discuss some of those possible corollaries in section~\ref{sec_ex}. Recovery of $g$ from $\mathcal{L}$ is an open problem with some results about the linearized problems obtained recently in \cite{LOSU-strings}. 

\textbf{Acknowledgments.} We would like to thank Matti Lassas for providing  some of the  references and for the useful discussions. 

\section{Preliminaries}
\subsection{Notation and terminology} \label{sec_not}
In what follows, we denote by $U$ and $V$ the projections of $\mathcal{U}$ and $\mathcal{V}$ onto the base $\bo$. We freely assume that $\mathcal{U}$ and $\mathcal{V}$, and therefore, $U$ and $V$ are small enough to satisfy the needed requirements below. 

If $\xi$ is a covector based at a point $x$ on $\bo$, we denote by $\xi'$ its orthogonal projection to $T_x^*\bo$. We routinely denote covectors on $T_x^*\bo$ by placing primes, like $\xi'$, etc., even if a priori such covector is not a projection of a given one. 

\textit{Timelike/spacelike/lightlike} vectors $v$ are the ones satisfying $g(v,v)<0$, or $g(v,v)>0$, or $g(v,v)=0$, respectively. We identify vectors and covectors by the metric. 
We choose an orientation in $U$ that we call future pointing (FP). More precisely, we choose some smooth timelike vector  $Z$ in $U$ (identified with an open set in the tangent bundle) and we call \textit{future pointing} those timelike vectors $v$ for which $g(v,Z)>0$. If we have a time variable $t$, for example, such a choice could be $Z=\partial/\partial t$. In semigeodesic coordinates $(t,x)$, see the remark after Lemma~\ref{seminormal}, FP $v=(v_0,v')$ means $v_0>0$. Notice that for the associated covector $(\tau,\xi) = gv$, we have $\tau<0$.

\begin{figure}[h!] 
  \centering
  \includegraphics[scale=1,page=1]{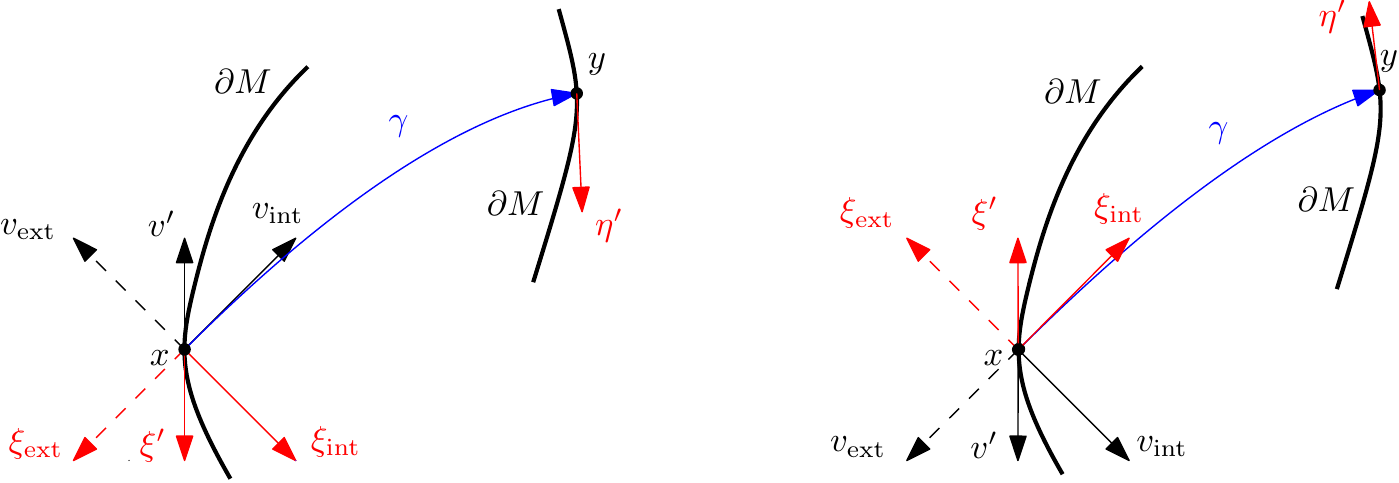}
\caption{\small  A tangent timelike future pointing (FP) vector $v'$ on the left, and a past pointing on the right; and the two  lightlike vectors $v_{\rm int}$ and $v_{\rm ext}$ with the same projection, pointing to $M$ and outside $M$, respectively. The FP geodesic $\gamma=\gamma_{x,\xi'}(s)$ in both cases propagates to the future but on the right, it is determined by negative values of the parameter over it. The corresponding covectors $\xi'$, $\xi_{\rm int}$ and $\xi_{\rm ext}$ are plotted, as well. The lens relation is $\mathcal{L}(x,\xi')= (y,\eta')$. 
}
\label{DN_Lorentz_fig1}
\end{figure}

Given a timelike $(x,\xi')\in \mathcal{U}$, assume first that $\xi'$ is FP. Let $\xi$ be the lightlike covector pointing into $M$ with orthogonal projection $\xi'$, identified with the vector $v=g^{-1}\xi$.  The  geodesic $\gamma_{x,\xi'}(s)$ issued from $(x,v)$, for $s\ge0$ will be called  the FP  geodesic issued from $(x,\xi')$. In Figure~\ref{DN_Lorentz_fig1} on the left, $v=v_{\rm int}$ and $\gamma_{x,\xi'}(s)=\gamma$. If $(x,\xi')$ is past pointing, then we choose $v$ to be the lightlike vector projecting to $v'$ pointing to the exterior ($v_{\rm ext}$ in Figure~\ref{DN_Lorentz_fig1} on the right) and take  $\gamma_{x,\xi'}(s)$ for $s\le0$. By propagation of singularities, a boundary singularity $(x,\xi')$ as above would propagate either along the FP geodesics chosen above, or along the past pointing ones (or both) that we did not choose. The choice we made reflects the requirement that singularities should propagate to the future only. We call such microlocal solutions  \textit{outgoing}.  We borrow that term from scattering theory.  In the case of the classical formulation of the Riemannian version of this problem, this is guaranteed by the condition $u=0$ for $t<0$. 

\subsection{Gauge Invariance} 
There exist some gauge transformations which leave the local and the global versions of the Dirichlet-to-Neumann map $\Lambda_{g,A,q}$ invariant, thus one can only expect to recover the corresponding gauge equivalence class. To simplify the formulations, we assume that the DN map $\Lambda_{g,A,q}$ is well defined globally on $M$. In our main theorems, we will apply this to the \PDO\ part of  $\Lambda_{g,A,q}$  first, and then $\Phi$ below needs to be identity near a fixed point only. For the semiglobal one, we need $\Phi$ to be identity near both ends of the fixed lightlike geodesic only. Since the computations below are purely algebraic, the lemmas remain true for the localized maps  with obvious modifications. 

We will consider two types of gauge transformations in this part. The first one is a diffeomorphism in $M$ which fixes $\partial M$.

\begin{lemma} \label{changeofcoordinates}
Let $(M,g)$ be a Lorentzian manifold with boundary as above, let $A$ be a smooth $1$-form and $q$ be a smooth function on $M$. If $\Phi:M\rightarrow M$ is a diffeomorphism with $\Phi|_{\partial M}=\text{\rm Id}$, then
$$
\Lambda_{g,A,q} = \Lambda_{\Phi^{\ast}g, \Phi^{\ast}A, \Phi^{\ast}q}.
$$
Here $Id:\partial M\rightarrow\partial M$ is the identity map, $\Phi^{\ast}g, \Phi^{\ast}A, \Phi^{\ast}q$ are the pullbacks of $g,A,q$ under $\Phi$, respectively.
\end{lemma}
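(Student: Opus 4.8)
The statement is the standard invariance of a boundary value problem (and hence of its DN map) under coordinate changes that fix the boundary. I would prove it by showing that $\Phi$ sets up a bijection between solutions of $Pu\in C^\infty$ and solutions of the pulled-back equation, and that this bijection preserves both the Dirichlet data and the Neumann data defined in \r{DN}.

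First I would recall that $P=P_{g,A,q}$ in \r{P} is the geometric wave operator built invariantly from the data $(g,A,q)$: up to the $C^\infty$ ambiguity allowed in \r{ibvp}, it is the operator $u\mapsto \Box_g u - 2i\langle A,\d u\rangle_g + (\dots)u + qu$, and the point is that $P_{g,A,q}(u\circ\Phi) = \big(P_{\Phi^*g,\Phi^*A,\Phi^*q}\, u\big)\circ\Phi^{-1}$ wherever this is examined. This is because each ingredient of $P$ — the Levi-Civita connection, the metric contraction $g^{jk}$, the volume density $\sqrt{|\det g|}\,$, and the minimal coupling $\partial_k - iA_k$ — is natural with respect to diffeomorphisms, so pulling back $g$, $A$, $q$ simultaneously and precomposing $u$ with $\Phi$ is the same operation. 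Concretely one can verify this in local coordinates by the chain rule: writing $y=\Phi(x)$, one checks that $\sqrt{|\det(\Phi^*g)|}$, $(\Phi^*g)^{jk}$ and $(\Phi^*A)_k$ are exactly the tensors obtained by transporting $g^{jk}$, $A_k$ and the density through the Jacobian $D\Phi$, and that the factors of $D\Phi$ cancel in the composition $\tfrac{1}{\sqrt{|\det g|}}(\partial-iA)\sqrt{|\det g|}\,g(\partial-iA)$. I would not grind through this; it is the routine tensorial calculation that makes $\Box_g$ and its minimal-coupling perturbation coordinate-independent objects.

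Next I would use $\Phi|_{\partial M}=\mathrm{Id}$ in two places. For the Dirichlet data: if $u$ solves \r{ibvp} for $(\Phi^*g,\Phi^*A,\Phi^*q)$ with $u|_{\bo}=f$, then $v:=u\circ\Phi^{-1}$ solves $P_{g,A,q}v\in C^\infty$ by the naturality just discussed, and $v|_{\bo} = u\circ\Phi^{-1}|_{\bo} = u|_{\bo} = f$ because $\Phi^{-1}$ is the identity on $\bo$; outgoing-ness is preserved because $\Phi$ is the identity near $\bo$ so it maps future-pointing bicharacteristics issued from $\bo$ to future-pointing ones (here I use the remark that for the localized maps one only needs $\Phi=\mathrm{Id}$ near the relevant boundary points, which holds automatically once $\Phi|_{\bo}=\mathrm{Id}$ for the $\Psi$DO part, or one imposes it near both endpoints for the semiglobal part, exactly as the paper stipulates). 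For the Neumann data: the conormal derivative $\partial_\nu$ appearing in \r{DN} is taken with respect to the unit outer normal of $(\bo,g)$; since $\Phi$ fixes $\bo$ pointwise, $D\Phi$ restricts to the identity on $T\bo$ at boundary points, hence the unit normal of $\Phi^*g$ at the boundary equals the unit normal of $g$, and $\partial_\nu u = \partial_\nu v$ along $\bo$; likewise $\langle \Phi^*A,\nu\rangle u|_{\bo} = \langle A,\nu\rangle v|_{\bo}$ since $\Phi^*A$ and $A$ agree on vectors tangent-or-normal to $\bo$ at boundary points. Therefore $\Lambda_{\Phi^*g,\Phi^*A,\Phi^*q}f = \Lambda_{g,A,q}f$ modulo $C^\infty$, which is the claim. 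Running the same argument with $\Phi$ replaced by $\Phi^{-1}$ gives the reverse inclusion, so the two DN maps coincide.

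The only genuinely delicate point — and the one I would state carefully rather than wave at — is the matching of the \emph{outgoing} microlocal solutions, i.e.\ that $\Phi$ intertwines the two parametrices and not just the two equations. Since the outgoing parametrix is uniquely determined (mod $C^\infty$) by the equation, the Dirichlet data, and the requirement that $\WF$ propagate along \emph{future-pointing} bicharacteristics, I need $\Phi$ to preserve the time orientation used to define ``future pointing'' near the boundary. This is automatic: $\Phi=\mathrm{Id}$ in a neighborhood of $\bo$ (for the $\Psi$DO/local statement it suffices that $D\Phi=\mathrm{Id}$ on $T_{x}M$ for $x\in\bo$, which the hypothesis gives on $T\bo$ and which one may arrange on the normal factor; for the semiglobal statement the paper already requires $\Phi=\mathrm{Id}$ near both endpoints), so the canonical transformation $(x,\xi)\mapsto(\Phi(x),(D\Phi^{-1})^{T}\xi)$ induced by $\Phi$ is the identity there and maps the future null cone to itself, hence sends outgoing parametrices to outgoing parametrices. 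With that settled, the algebraic/tensorial identity $P_{g,A,q}(u\circ\Phi)=(P_{\Phi^*g,\Phi^*A,\Phi^*q}u)\circ\Phi^{-1}$ and the boundary-data comparison above close the argument.
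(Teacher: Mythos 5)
Your overall strategy is the same as the paper's: verify that $\Phi$ intertwines the two equations (naturality of $P$ under pullback), check that the Dirichlet data match because $\Phi|_{\bo}=\mathrm{Id}$, and then compare Neumann data. The one step where you are actually wrong, not merely terse, is the Neumann comparison. You assert that since $D\Phi$ restricts to the identity on $T\bo$, ``the unit normal of $\Phi^*g$ at the boundary equals the unit normal of $g$.'' That inference is false: $\Phi|_{\bo}=\mathrm{Id}$ forces $D\Phi_x|_{T_x\bo}=\mathrm{Id}$ for $x\in\bo$, but $D\Phi_x$ can still shear the normal direction (e.g.\ $\Phi(x',x^n)=(x'+x^n e_1,\,x^n)$ on a half-space). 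In that case the $\Phi^*g$-unit normal at $x$ is $(D\Phi_x)^{-1}\nu_g$, not $\nu_g$. Your later suggestion that ``one may arrange'' $D\Phi=\mathrm{Id}$ on the normal factor is also not available, since composing $\Phi$ with another boundary-fixing diffeomorphism changes $\Phi^*g$ and hence the statement being proved.

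What rescues the conclusion, and is what the paper actually checks in coordinates, is a Jacobian cancellation rather than equality of normals: with $\tilde\nu:=(D\Phi_x)^{-1}\nu_g$ the $\Phi^*g$-unit normal, and $u=v\circ\Phi$, the chain rule gives $\tilde\nu u = dv_{\Phi(x)}(D\Phi_x\tilde\nu)=dv_x(\nu_g)=\nu_g v$ at $x\in\bo$, and similarly $(\Phi^*A)(\tilde\nu)=A(D\Phi\,\tilde\nu)=A(\nu_g)$. So the full conormal expression $\partial_\nu-i\langle A,\nu\rangle$ transforms correctly even though $\tilde\nu\neq\nu_g$. Replace your ``the unit normals coincide'' claim with this chain-rule computation and the argument is sound and matches the paper's. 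Your additional discussion of why the outgoing parametrix is preserved (time orientation fixed near $\bo$) is a correct and useful point that the paper leaves implicit.
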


\begin{proof}
For any $f\in C^{\infty}(\partial M)$, let $u$ be the solution of $\mathcal{L}_{g,A,q}u=0$ on $M$ with $u|_{\partial M}=f$. Define $v:=\Phi^{\ast} u$ as the pull-back of $u$, then simple calculation in local coordinates shows that $\mathcal{L}_{\Phi^{\ast}g,\Phi^{\ast}A,\Phi^{\ast}q}v=0$ and $v|_{\partial M}=f$. If we write $y=\Phi(x)$ as a local coordinate representation of $\Phi$, then
\begin{align*}
\Lambda_{g,A,q} f(y) & = \nu^{j}(y)\frac{\partial u}{\partial y^j}(y) - i\nu^{j}(y) A_{j}(y) u(y) \Big|_{\partial M} \\
& =   \nu^j(x) \frac{\partial x^l}{\partial y^j} \frac{\partial v}{\partial y^l} - i\frac{\partial x^l}{\partial y^j} \nu^j(x) \frac{\partial y^k}{\partial x^l} A_k(x) v(x) \Big|_{\partial M}\\
& = \tilde \nu^{j}(x)\frac{\partial v}{\partial x^j}(x) - i\tilde \nu^{j}(x) (\Phi^* A)_{j}(x) v(x) \Big|_{\partial M} \\
& = \Lambda_{\Phi^{\ast}g, \Phi^{\ast}A, \Phi^{\ast}q}f,
\end{align*}
where $\nu$ and $\tilde \nu$ are the unit normals in the $y$ and the $x$ variables, respectively. The above calculation essentially verifies that $\Lambda_{g,A,q}$ is defined invariantly. 
 Therefore,  $\Lambda_{g,A,q} = \Lambda_{\Phi^{\ast}g, \Phi^{\ast}A, \Phi^{\ast}q}$.
\end{proof}

Another type of gauge invariance occurs when one makes a conformal change of the metric $g$. This type of gauge invariance also occurs when $g$ is a Riemannian metric and $\Lambda_{g,A,q}$ is the corresponding Dirichlet-to-Neumann map for the magnetic Schr\"{o}dinger equation, see \cite[Proposition 8.2]{DosSantosKSU09}.

\begin{lemma} \label{conformalgauge}
Let $(M,g)$ be a Lorentzian manifold with boundary as above, let $A$ be a smooth $1$-form and $q$ be a smooth function on $M$. If $\varphi$ and $\psi$ are smooth functions such that
$$
\varphi|_{\partial M} = \partial_\nu \varphi |_{\partial M} = 0, \quad \psi|_{\partial M}=0,
$$
then we have
$$
\Lambda_{g,A,q} = \Lambda_{e^{-2\varphi}g, A-d\psi, e^{2\varphi}(q-q_\varphi)}
$$
where $q_\varphi:=e^{\frac{n-2}{2}\varphi}\Box_g e^{\frac{2-n}{2}\varphi}$.
\end{lemma}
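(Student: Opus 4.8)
The plan is to verify the claimed identity $\Lambda_{g,A,q} = \Lambda_{e^{-2\varphi}g,\,A-\d\psi,\,e^{2\varphi}(q-q_\varphi)}$ by splitting it into two independent gauge transformations and composing them. First I would handle the magnetic part: if $\psi$ is smooth and vanishes on $\bo$, then replacing $A$ by $A-\d\psi$ corresponds to conjugating the operator $P$ by $e^{i\psi}$, i.e.\ $P_{g,A-\d\psi,q} = e^{-i\psi} P_{g,A,q}\, e^{i\psi}$, which is a direct algebraic check using the product rule on $(\partial_j - iA_j + i\partial_j\psi)(e^{i\psi} u) = e^{i\psi}(\partial_j - iA_j)u$. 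Since $\psi|_{\bo}=0$, we have $e^{i\psi}|_{\bo}=1$, so $u$ and $e^{i\psi}u$ have the same Dirichlet data; and since $\nu(e^{i\psi}u) = e^{i\psi}(\nu u + i(\nu\psi)u)$, when we subtract $i\langle A-\d\psi,\nu\rangle e^{i\psi}u = i(\langle A,\nu\rangle - \nu\psi)u$ on $\bo$, the extra terms cancel and the Neumann data agree. This gives $\Lambda_{g,A,q}=\Lambda_{g,A-\d\psi,q}$.

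Next I would handle the conformal part with $A$ fixed (now set to $A-\d\psi$, but since the magnetic term played no role in the previous step's structure I can treat them in either order). The key is the classical conformal covariance of the wave/Laplace operator: for a conformal rescaling $\g = e^{-2\varphi}g$ in dimension $1+n$, one has the intertwining identity
\begin{equation}
\Box_{e^{-2\varphi}g}\big(e^{\frac{2-n}{2}\varphi} w\big) = e^{-\frac{n+2}{2}\varphi}\Big(\Box_g w + \big(e^{\frac{n-2}{2}\varphi}\Box_g e^{\frac{2-n}{2}\varphi}\big) w\Big),
\end{equation}
with the same computation carrying over verbatim when $\partial_j$ is replaced by $\partial_j - iA_j$ throughout, since $A$ is not affected by a conformal change and the conjugation by the scalar $e^{\frac{2-n}{2}\varphi}$ commutes past the $-iA_j$ term in the same way. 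Writing $c:=\frac{2-n}{2}$ so that $w = e^{-c\varphi}u$ solves the rescaled equation, I would verify that $P_{e^{-2\varphi}g,A,e^{2\varphi}(q-q_\varphi)}w = e^{-\frac{n+2}{2}\varphi}P_{g,A,q}u$ modulo checking that the potential term matches: $e^{2\varphi}(q-q_\varphi)\cdot e^{c\varphi} w \cdot (\text{conformal factor}) $ combines with the $q_\varphi w$ term produced by the operator identity to leave exactly $qu$. Thus $P_{g,A,q}u = 0 \iff P_{e^{-2\varphi}g,A,e^{2\varphi}(q-q_\varphi)}w = 0$.

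Finally I would check that the boundary data are preserved. Because $\varphi|_{\bo}=0$, we get $e^{-c\varphi}|_{\bo}=1$, so $w|_{\bo}=u|_{\bo}=f$. For the Neumann data I must be careful on two counts: the conformal factor rescales the metric and hence the unit normal $\nu_{\g} = e^{\varphi}\nu_g$ (up to sign/orientation), and $w = e^{-c\varphi}u$ differs from $u$ by a factor whose normal derivative is governed by $\partial_\nu\varphi$. This is precisely where the hypothesis $\partial_\nu\varphi|_{\bo}=0$ enters: it forces $\partial_{\nu}(e^{-c\varphi}u)|_{\bo} = \partial_\nu u|_{\bo}$, and combined with $\nu_{\g}|_{\bo}=\nu_g|_{\bo}$ (since $e^\varphi = 1$ there) and $\langle A,\nu_{\g}\rangle = \langle A,\nu_g\rangle$ on $\bo$, we obtain $(\partial_{\nu_{\g}} w - i\langle A,\nu_{\g}\rangle w)|_{\bo} = (\partial_{\nu_g} u - i\langle A,\nu_g\rangle u)|_{\bo}$, i.e.\ the DN maps coincide. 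Composing the two gauge steps yields the claim. The main obstacle, and the step deserving the most care, is the conformal intertwining computation together with the bookkeeping of normal derivatives on $\bo$ — making sure the three hypotheses $\varphi|_{\bo}=0$, $\partial_\nu\varphi|_{\bo}=0$, $\psi|_{\bo}=0$ are each used exactly where needed and that no leftover first-order boundary terms survive; everything else is routine algebra in local coordinates, and as in Lemma~\ref{changeofcoordinates} the same computation localizes without change to the microlocal \PDO\ and FIO versions of $\Lambda$.
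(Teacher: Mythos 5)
Your approach is essentially the same as the paper's: both use the two conjugation identities $e^{-i\psi}P_{g,A,q}e^{i\psi}=P_{g,A-d\psi,q}$ and $e^{\frac{n+2}{2}\varphi}P_{g,A,q}\,e^{\frac{2-n}{2}\varphi}=P_{e^{-2\varphi}g,A,e^{2\varphi}(q-q_\varphi)}$ and then track Dirichlet and Neumann data on $\bo$ using the three hypotheses exactly as you indicate (the paper just compresses the two steps into the single substitution $v=e^{\frac{n-2}{2}\varphi}e^{-i\psi}u$). One caveat: your displayed intertwining identity and the subsequent line ``$P_{e^{-2\varphi}g,A,e^{2\varphi}(q-q_\varphi)}w=e^{-\frac{n+2}{2}\varphi}P_{g,A,q}u$'' have incorrect exponents/signs compared with the correct relation $P_{e^{-2\varphi}g,A,e^{2\varphi}(q-q_\varphi)}w=e^{+\frac{n+2}{2}\varphi}P_{g,A,q}u$; this does not affect the conclusion that the transformed $w$ solves the transformed equation (both sides vanish), but it would matter if you actually carried out the ``direct computation in local coordinates'' the paper invokes, so be careful when you fill in that step.
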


\begin{proof}
A direct computation in local coordinates shows that 
\begin{align*}
e^{\frac{n+2}{2}\varphi} P_{g,A,q} (e^{\frac{2-n}{2}\varphi}u) &=  P_{e^{-2\varphi}g, A, e^{2\varphi}(q-q_\varphi)}u \\
e^{-i\psi} P_{g,A,q} (e^{i\psi}u) & =  P_{g, A-d\psi, q}u 
\end{align*}
For any $f\in C^{\infty}(\partial M)$, let $u$ be the solution of $P_{g,A,q}u=0$ on $M$ with $u|_{\partial M}=f$. Setting $v:=e^{\frac{n-2}{2}\varphi} e^{-i\psi} u$,  we have
\begin{align*}
P_{e^{-2\varphi}g, A-d\psi, e^{2\varphi}(q-q_\varphi)}v &=  P_{e^{-2\varphi}g, A-d\psi, e^{2\varphi}(q-q_\varphi)}(e^{\frac{n-2}{2}\varphi} e^{-i\psi} u) \\
&=  e^{\frac{n+2}{2}\varphi} P_{g,A-d\psi,q}(e^{-i\psi}u) \\
&=  e^{\frac{n+2}{2}\varphi} e^{-i\psi} P_{g,A,q}u =0
\end{align*}
Furthermore, notice that $\nu_{e^{-2\varphi}g}=\nu_g$ by the assumption on $\varphi$, thus
\begin{align*}
\Lambda_{e^{-2\varphi}g, A-d\psi, e^{2\varphi}(q-q_\varphi)} f & =  \nu^j \frac{\partial v}{\partial x^j} - i \nu^j \left(A_j - \frac{\partial\psi}{\partial x^j}\right)v |_{\partial M} \\
&=  \nu^j \frac{\partial (e^{\frac{n-2}{2}\varphi} e^{-i\psi} u)}{\partial x^j} - i \nu^j \left(A_j - \frac{\partial\psi}{\partial x^j}\right) (e^{\frac{n-2}{2}\varphi} e^{-i\psi} u) |_{\partial M} \\
&=  \nu^j \left( -i\frac{\partial\psi}{\partial x^j} u + \frac{\partial u}{\partial x^j} \right) - i \nu^j i \nu^j \left(A_j - \frac{\partial\psi}{\partial x^j}\right) u |_{\partial M} \\
&=  \nu^j \frac{\partial u}{\partial x^j} - i \nu^j A_j u |_{\partial M} \\
&=  \Lambda_{g,A,q}f
\end{align*}
which completes the proof.
\end{proof}

\subsection{Gauge equivalent modifications of $g,A,q$}

It is convenient to work in semi-geodesic normal coordinates on a Lorentzian manifold. These coordinates are the Lorentzian counterparts of the well known Riemannian semigeodesic  coordinates for Riemannian manifolds with boundary. We formulate the existence of such coordinate in the following lemma.
 
\begin{lemma} \label{seminormal}
Let $S$ be either a timelike  hypersurface in $M$. For every $x_0\in S$, there exist $\eps>0$, a neighborhood $N$ of $x_0$  in $M$, and a diffeomorphism $\Psi:S\cap N \times [0,T)\rightarrow N$ such that

 (i)  $\Psi(x',0)=x'$ for all $x'\in S\cap N$;
 
(ii)  $\Psi(x',x^n) = \gamma_{x'}(x^n)$ where $ \gamma_{x'}(x^n)$ is the unit speed geodesic issued from $x'$ normal to $S$.	

Moreover,   if $(x^0,\dots,x^{n-1})$ are local boundary coordinates on $S$, in the coordinate system \allowbreak $(x^0,\dots,x^n)$,  the metric tensor $g$ takes the form
\begin{equation} \label{metricform}
g=g_{\alpha\beta} dx^\alpha\otimes dx^\beta + dx^n\otimes dx^n, \quad \alpha,\beta\le n-1. 
\end{equation}
\end{lemma}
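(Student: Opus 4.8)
The plan is to construct $\Psi$ via the normal exponential map of the timelike hypersurface $S$ and then verify that the induced coordinates put $g$ in the stated form. First I would note that since $S$ is timelike, its normal line bundle is spacelike and one-dimensional: at each $x' \in S$ the $g$-orthogonal complement of $T_{x'}S$ is a line on which $g$ is positive definite, so there is a well-defined unit normal field $\nu$ (up to sign; fix one side locally since $M$ has boundary, or fix either orientation in the interior case) with $g(\nu,\nu)=1$. For $x' \in S \cap N$ and small $x^n \ge 0$, let $\gamma_{x'}(x^n)$ be the geodesic with $\gamma_{x'}(0)=x'$, $\dot\gamma_{x'}(0)=\nu(x')$, and set $\Psi(x',x^n) = \gamma_{x'}(x^n)$. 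Properties (i) and (ii) hold by construction. That $\Psi$ is a diffeomorphism from a neighborhood of $S \times \{0\}$ onto a neighborhood $N$ of $x_0$ follows from the inverse function theorem: $d\Psi$ at $(x_0,0)$ maps $T_{x_0}S$ identically into $T_{x_0}M$ and sends $\partial/\partial x^n$ to $\nu(x_0) \notin T_{x_0}S$, hence is an isomorphism; shrink $N$ and choose $\eps>0$ (the "$T$" in the statement) accordingly.

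Next I would establish the coordinate form \r{metricform}, i.e.\ that in the coordinates $(x^0,\dots,x^{n-1},x^n)$ pulled back by $\Psi$ one has $g_{nn}\equiv 1$ and $g_{\alpha n}\equiv 0$ for $\alpha \le n-1$. The identity $g_{nn}=1$ is immediate because the curves $x^n \mapsto \gamma_{x'}(x^n)$ are unit-speed geodesics, so $g(\partial_n,\partial_n) = g(\dot\gamma_{x'},\dot\gamma_{x'}) = 1$ along each of them. For the cross terms $g_{\alpha n} = g(\partial_\alpha,\partial_n)$, I would use the standard Gauss-lemma computation: along a fixed geodesic $\gamma_{x'}$, differentiate $g(\partial_\alpha,\partial_n)$ in $x^n$. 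Since $\nabla_{\partial_n}\partial_n = 0$ (geodesic) and $[\partial_\alpha,\partial_n]=0$ (coordinate vector fields), metric compatibility gives $\partial_n\, g(\partial_\alpha,\partial_n) = g(\nabla_{\partial_n}\partial_\alpha,\partial_n) = g(\nabla_{\partial_\alpha}\partial_n,\partial_n) = \tfrac12 \partial_\alpha\, g(\partial_n,\partial_n) = \tfrac12 \partial_\alpha(1) = 0$. Hence $g_{\alpha n}$ is constant in $x^n$, and at $x^n=0$ it equals $g(\partial_\alpha,\nu)$ on $S$, which vanishes because $\nu$ was chosen orthogonal to $TS$. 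Therefore $g_{\alpha n}\equiv 0$, and $g = g_{\alpha\beta}\,dx^\alpha\otimes dx^\beta + dx^n\otimes dx^n$ with $\alpha,\beta \le n-1$, as claimed; note the $+$ sign in front of $dx^n\otimes dx^n$ reflects that the normal direction to a timelike surface is spacelike.

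The only genuine point requiring care—rather than a true obstacle—is the sign/causality bookkeeping: one must check that the normal direction is spacelike (so the unit normal exists with $g(\nu,\nu)=+1$ and the geodesics $\gamma_{x'}$ stay real and spacelike for small $x^n$), which is exactly where the hypothesis that $S$ is timelike is used, and which is what makes the Lorentzian statement differ from the Riemannian one only by this sign. Everything else is a verbatim transcription of the Riemannian semigeodesic-coordinate argument, since the Gauss-lemma computation above is purely a consequence of the Levi-Civita connection and does not see the signature. I would close by remarking that, as in the Riemannian case, one may additionally arrange the boundary coordinates on $S$ freely; this does not affect the normal-direction normalization.
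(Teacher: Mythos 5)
Your proposal is correct and is essentially the standard argument that the paper defers to (via the reference to Petrov's book): construct $\Psi$ as the normal exponential map off $S$, invoke the inverse function theorem to get a local diffeomorphism, then obtain the block form of $g$ via the Gauss-lemma computation ($g_{nn}=1$ from unit speed, $\partial_n g_{\alpha n}=0$ from torsion-freeness and metric compatibility, and $g_{\alpha n}|_{x^n=0}=0$ from normality). The paper's brief remark that $\Gamma^i_{nn}=0$ along the normal lines is an equivalent reformulation of the same fact you derive; your write-out just makes the computation explicit rather than citing a reference.
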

 
Clearly, $g_{\alpha\beta}$ has a Lorentzian signature as well.  If $M$ has a boundary, then $S$ can be $\bo$ and $x^n$ is restricted to $[0,\eps]$.  
A proof of the lemma can be found in \cite{Petrov_book} and is based on the fact that the lines $x'=\text{const.}$, $x^n=s$ are unit speed geodesics; therefore the Christoffel symbols  $\Gamma_{nn}^i$ vanish for all $ i$.  
 We will call such coordinates  the semi-geodesic normal coordinates. The lemma remains true if $S$ is spacelike with a negative sign in front of $dx^n\otimes dx^n$ in \r{metricform}, and this gives as a way to define a time function $t=x^n$ locally, and put the metric in the block form \r{metricform}.

Now we use the gauge invariance of $\Lambda_{g,A,q}$ to alter $g,A,q$ without changing the DN map. Three types of modifications are made in the following, labeled as \textbf{(M1)-(M3)} respectively.

Firstly, given two metrics $g$ and $\tilde{g}$, one can choose diffeomorphisms  as in Lemma \ref{changeofcoordinates} to obtain common semi-geodesic normal coordinates. In fact, let $\Psi$ and $\tilde{\Psi}$ be diffeomorphisms like in Lemma \ref{seminormal} with respect to $g$ and $\tilde{g}$ respectively, then $\tilde{\Psi} \circ \Psi^{-1}$ is a diffeomorphism near $\partial M$ which fixes $\partial M$.  Extend $\tilde{\Psi} \circ \Psi^{-1}$ as in \cite{Palais60} to be a global diffeomorphism on $M$. 
The properties of $\Psi$ and $\tilde{\Psi}$ ensure that the two metrics $g$ and $(\tilde{\Psi} \circ \Psi^{-1})^\ast \tilde{g}$ have common semi-geodesic normal coordinates near $\partial M$. Therefore, we may assume \\

\textbf{(M1)}: if $(x',x^n)$ are the semi-geodesic normal coordinates for $g$, they are also the semi-geodesic normal coordinates for $\tilde{g}$.\\

Secondly, we employ the conformal gauge invariance to replace $\tilde g$ with a gauge equivalent one to obtain some identities which later will help simplify the calculations. 

\begin{lemma} \label{detvanish}
Let $S$ be either a timelike or spacelike hyperplane near some point $p_0\in S$. Given smooth functions $r_2, r_3, \dots$ on $S$ near $p_0$, there exists a smooth function $\mu>0$ near $p_0$ with $\mu=0$, $\partial_\nu\mu=0$ on $S$ so that if $\hat{\Psi}$ is the diffeomorphism in Lemma \ref{seminormal} related to the metric $\hat{g}:=e^{\mu}g$, then 
$$
\partial^j_n \det(\hat{\Psi}^\ast \hat{g}) = r_j, \quad\quad j=2,3,\dots
$$
on $S$ near $p_0$. Here $\partial_n=\frac{\partial}{\partial x^n}$ with $(x^0,\dots, x^n)$ the semi-geodesic normal coordinates for $g$.
\end{lemma}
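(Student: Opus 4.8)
The plan is to reduce the assertion to a prescription problem for the normal Taylor coefficients of $\mu$ along $S$, solved by a triangular recursion together with Borel's lemma. Work in the semi-geodesic normal coordinates $(x^0,\dots,x^n)$ for $g$ furnished by Lemma~\r{seminormal}, so that locally $S=\{x^n=0\}$, $g$ has the block form \r{metricform}, and $\Gamma^i_{nn}\equiv0$. For a smooth $\mu$ with $\mu|_S=\partial_n\mu|_S=0$ the metric $\hat g=e^\mu g$ is again Lorentzian with $S$ of the same causal type, so Lemma~\r{seminormal} applies to $\hat g$ and produces $\hat\Psi$; in the associated $\hat g$-semi-geodesic coordinates $\hat\Psi^\ast\hat g$ is block-diagonal with $(\hat\Psi^\ast\hat g)_{nn}=\pm1$, $(\hat\Psi^\ast\hat g)_{\alpha n}=0$, hence $\det(\hat\Psi^\ast\hat g)=\pm\det h$ with $h:=\big((\hat\Psi^\ast\hat g)_{\alpha\beta}\big)_{0\le\alpha,\beta\le n-1}$. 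Since on $S$ these coordinates agree with the ones for $g$ together with their differentials, the normal derivative $\partial_n$ is unambiguous on $S$, and passing between the two normal jets at $S$ is a triangular--unipotent operation (this is the only place the precise reading of ``$\partial_n$'' matters). The hypotheses $\mu|_S=\partial_\nu\mu|_S=0$ are used exactly to force $d\mu=0$ on $S$ (whence $\hat\Gamma^i_{jk}=\Gamma^i_{jk}$ on $S$) and $\hat\nu=\nu$ on $S$.

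The key computation I would carry out is to show that $h$ depends triangularly on the normal jet of $\mu$. From $\hat\nu=\nu$ on $S$ the $\hat g$-normal geodesic $\hat\gamma_{x'}$ has $\hat\gamma_{x'}(0)=x'$ and $\dot{\hat\gamma}_{x'}(0)=\nu(x')$. From the conformal law $\hat\Gamma^i_{jk}=\Gamma^i_{jk}+\tfrac12\big(\delta^i_j\partial_k\mu+\delta^i_k\partial_j\mu-g_{jk}g^{il}\partial_l\mu\big)$ and $d\mu|_S=0$ one gets $\hat\Gamma^i_{jk}=\Gamma^i_{jk}$ on $S$; differentiating $\ddot{\hat\gamma}^i=-\hat\Gamma^i_{jk}(\hat\gamma)\dot{\hat\gamma}^j\dot{\hat\gamma}^k$ repeatedly and using $\Gamma^i_{nn}\equiv0$, a short induction shows that $\partial_n^k\hat\Psi^i|_S$ is a polynomial in $g$, its derivatives, and $\partial_n^2\mu|_S,\dots,\partial_n^{k-1}\mu|_S$ (and their tangential derivatives); in particular it never involves $\partial_n^k\mu|_S$. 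Plugging this into $(\hat\Psi^\ast\hat g)_{\alpha\beta}=\hat g_{ij}(\hat\Psi)\,\partial_\alpha\hat\Psi^i\,\partial_\beta\hat\Psi^j$ and expanding $\partial_n^j$ by the Leibniz and chain rules, one sees that $\partial_n^j\mu|_S$ can occur only once: through the chain-rule term in which all $j$ derivatives land on $\hat g_{\alpha\beta}(\hat\Psi)$ with the other factors frozen to their values on $S$. That term equals $\partial_n^j(e^\mu g_{\alpha\beta})|_S=(\partial_n^j\mu)|_S\,g_{\alpha\beta}|_S+(\text{terms in }\partial_n^{<j}\mu|_S)$ because $\mu|_S=\partial_n\mu|_S=0$, while every other term involves only $\partial_n^{<j}\mu|_S$. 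Hence
\[
\partial_n^j h_{\alpha\beta}\big|_S=(\partial_n^j\mu)\big|_S\,g_{\alpha\beta}\big|_S+R^{(j)}_{\alpha\beta},
\]
where $R^{(j)}_{\alpha\beta}$ depends only on $g$ and on $\partial_n^2\mu|_S,\dots,\partial_n^{j-1}\mu|_S$.

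Combining this with the identity $\partial_n^j\det h=\det h\cdot\mathrm{tr}\!\big(h^{-1}\partial_n^j h\big)+(\text{terms in }h,\partial_n h,\dots,\partial_n^{j-1}h)$ and with $h|_S=(g_{\alpha\beta})|_S$ nondegenerate yields
\[
\partial_n^j\det(\hat\Psi^\ast\hat g)\big|_S=n\,\det(g|_S)\,\partial_n^j\mu\big|_S+F_j,
\]
where $F_j$ again depends only on $g$ and on $\partial_n^2\mu|_S,\dots,\partial_n^{j-1}\mu|_S$, $n=\dim S$, and $n\det(g|_S)\neq0$. One then defines, recursively in $j=2,3,\dots$, the smooth function $\partial_n^j\mu|_S:=\big(r_j-F_j\big)\big/\big(n\det(g|_S)\big)$ on $S$ near $p_0$ (at stage $j$ only the previously fixed coefficients occur in $F_j$), together with $\mu|_S=\partial_n\mu|_S=0$; by Borel's lemma there is a smooth $\mu$ defined near $p_0$ (shrinking the neighborhood if needed) with exactly these normal Taylor coefficients along $S$, and then $\hat g=e^\mu g$ and $\hat\Psi$ as in Lemma~\r{seminormal} satisfy $\partial_n^j\det(\hat\Psi^\ast\hat g)|_S=r_j$ for all $j\ge2$ by the last display. (The statement's ``$\mu>0$'' should be read as ``$e^\mu>0$'', which is automatic; a smooth $\mu$ is all that is used. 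If one insists on $\partial_n$ being the normal coordinate of $g$ rather than of $\hat g$, the triangular--unipotent change of normal jets noted above only relabels the data $r_j$ and leaves the recursion solvable.)

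The main obstacle is the bookkeeping behind the first displayed identity: since $\hat\Psi$ is itself manufactured from $\mu$ through the geodesic flow of $\hat g$, one must verify that the top coefficient $\partial_n^j\mu|_S$ cannot enter $\partial_n^j\det(\hat\Psi^\ast\hat g)|_S$ through $\hat\Psi$, and does enter, with the nonvanishing coefficient $n\det(g|_S)$, through the pointwise factor $e^\mu$ followed by taking the determinant. Granting that, the recursion and the Borel construction are entirely routine.
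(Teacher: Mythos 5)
Your argument is correct, and it takes a genuinely different route from the paper's at the key computational step, namely isolating the coefficient of the top normal derivative $\partial_n^j\mu|_S$ inside $\partial_n^j\det(\hat\Psi^*\hat g)|_S$. The paper factorizes $\det(\hat\Psi^*\hat g)=(\det\d\hat\Psi)^2\,\det(\hat g\circ\hat\Psi)$ and tracks the top $\mu$-derivative through each factor separately: $(\det\d\hat\Psi)^2$ is claimed to contribute $-2\,\partial_n^j\mu\,\det g$ via the Jacobian of the geodesic map, and $\det(\hat g\circ\hat\Psi)=e^{(n+1)(\mu\circ\hat\Psi)}\det g\circ\hat\Psi$ contributes $(n+1)\,\partial_n^j\mu\,\det g$, giving the paper's coefficient $n-1$. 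You instead exploit that $\hat\Psi^*\hat g$ is by construction block diagonal with $(\hat\Psi^*\hat g)_{nn}=\pm1$, so $\det(\hat\Psi^*\hat g)=\pm\det h$ for the $n\times n$ tangential block $h$, then combine the inductive statement that $\partial_n^k\hat\Psi|_S$ involves only $\partial_n^{\le k-1}\mu$ with $\partial_n^j h_{\alpha\beta}|_S=(\partial_n^j\mu)\,g_{\alpha\beta}|_S+R^{(j)}_{\alpha\beta}$ and the trace identity $\partial_n^j\det h=\det h\cdot\mathrm{tr}(h^{-1}\partial_n^j h)+(\text{lower})$, giving the coefficient $n\det g|_S$. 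Your value is in fact the correct one: a direct check at $j=2$ shows $\partial_n\hat\Gamma^i_{nn}|_S=\tfrac12\delta^i_n\mu_{nn}$, so $\partial_n^2(\det\d\hat\Psi)|_S=-\tfrac12\mu_{nn}$ (not $-\mu_{nn}$ as the paper's \r{A2j} states), and the two contributions sum to $n$, not $n-1$. Since both $n$ and $n-1$ are nonzero for $n\ge 2$, this slip does not affect the paper's conclusion, but your block-form route both avoids the Jacobian-of-determinant bookkeeping and lands on the correct constant. The remainder of your proof (the triangular recursion on normal jets and the Borel construction, the remark that $\mu=0$, $\partial_\nu\mu=0$ forces $\d\mu=0$ and $\hat\nu=\nu$ on $S$, and the reading of the statement's ``$\mu>0$'') matches the paper's closing steps.
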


Before giving the proof of the lemma, we remark that $(x^0,\dots,x^n)$ may not be the semi-geodesic normal coordinates for $\hat{g}$.

\begin{proof}
The statement of the theorem is invariant under replacing $g$ by $\Psi^\ast g$ for any local diffeomorphism $\Phi$ which preserves the boundary pointwise. Therefore, we may assume that $g$ is replaced by $\Psi^\ast g$, i.e., that $x=(x',x^n)$ are semi-geodesic coordinates of $g$.

Note first that the conformal factor does not change the property of a covector being normal to $S$ but rescales the normal derivative and may change the higher order ones because $\gamma_{x'}$ may change its curvature with respect to the old metric. More precisely, for the vector $e_n=(0,\dots,0,1)$ we have $g(e_n,e_n)=\mp 1$ but $\hat{g}(e_n,e_n)=\mp e^{\mu}$. Therefore, for the corresponding normal derivatives we have $\hat{\partial}_\nu=e^{-\mu/2}\partial_\nu=\partial_n$ on $x^n=0$. Let $\hat{\gamma}_{x'}(s)$ be the normal geodesic at $x'\in S$ with $\dot{\hat{\gamma}}_{x'}$ consistent with the orientation of $S$, normalized by $\hat{g}(\dot{\hat{\gamma}}_{x'}(s), \dot{\hat{\gamma}}_{x'}(s))=\mp 1$. Then for every smooth function $f$, 
$$
\partial^j_n \hat{\Psi}^\ast f (x')|_{x^n=0} = \partial^j_n|_{x^n=0} f(\hat{\gamma}_{x'}(x^n)).
$$
For $j=0,1$, the results are not affected by the conformal factor and we get 
$$\hat{\Psi}^\ast f(x')|_{x^n=0} = f(x',0), \quad \partial_n \hat{\Psi}^\ast f(x')|_{x^n=0} = f_n(x',0)$$
To compute the higher order normal derivatives, we write
\begin{equation} \label{A1}
\partial^2_n \hat{\Psi}^\ast f(x') = f_{ij} \dot{\hat{\gamma}}^{i}_{x'} \dot{\hat{\gamma}}^{j}_{x'} + f_i \ddot{\hat{\gamma}}^{i}_{x'} \quad \text{ on } x^n=0.
\end{equation}
Under the conformal change of the metric, the Christoffel symbols are transformed by the law
$$\hat{\Gamma}^{k}_{jk} = \Gamma^{k}_{ij} + \frac{1}{2}\delta^k_i \partial_j \mu + \frac{1}{2}\delta^k_j \partial_i \mu - g_{ij} \nabla^k \mu.$$
In particular,
\begin{equation} \label{A1g}
\hat{\Gamma}^{k}_{nn} = \Gamma^{k}_{nn} + \frac{1}{2}\delta^k_n \partial_n \mu + \frac{1}{2}\delta^k_n \partial_n \mu - g_{nn} \nabla^k \mu = \delta^k_n \partial_n \mu - \frac{1}{2} g^{kl}\partial_l \mu.
\end{equation}
Therefore, $\hat{\Gamma}^{k}_{nn} = 0$ on $x^n=0$ and \eqref{A1} reduces to
\begin{equation} \label{A2}
\partial^2_n \hat{\Psi}^\ast f(x') = f_{nn} \quad \text{ on } x^n=0.
\end{equation}
In a similar way, we may compute $\partial^j_n \hat{\Psi}^\ast f(x')$ on $x^n=0$. The result is $\partial^j_n f$ plus normal derivatives of $f$ of order $j-1$ and less with coefficients depending on the normal derivatives of $\mu$ up to order $j-1$. For our purposes, the exam expression does not matter.

The metric $\hat{g}$ has the form
$$(\hat{\Psi}^\ast \hat{g})_{kl} = (\hat{g}_{ij}\circ\hat{\Psi}) \frac{\partial \hat{\Psi}^i}{\partial x^k} \frac{\partial \hat{\Psi}^j}{\partial x^l} = (\hat{g}_{\alpha\beta}\circ\hat{\Psi}) \frac{\partial \hat{\Psi}^\alpha}{\partial x^k} \frac{\partial \hat{\Psi}^\beta}{\partial x^l} + \frac{\partial \hat{\Psi}^n}{\partial x^k} \frac{\partial \hat{\Psi}^n}{\partial x^l}$$
where the Greek indices range from $0$ to $n-1$ (but not $n$). In particular,
\be{Adet}
\det\hat  \Psi^* \hat g = (\det \d\hat \Psi)^2\det( \hat  g\circ \hat \Psi).
\ee

We need to understand the structure of $\partial_n^k(\det \d\hat\Psi)|_{x^n=0}$ now. For $k=0$, we have $\d\hat\Psi|_{x^n=0}=\Id$. Notice next that
\be{A2d}
\d\hat\Psi = (\partial_0\hat\Psi,\dots,\partial_{n-1}\hat\Psi, \partial_n\hat\Psi),
\ee
where each partial derivative is a vector. Since by \r{A1g}, $\partial_n^2\hat \Psi^i =  -\hat\Gamma^i_{nn}=0$ for $x^n=0$,
\[
\partial_n(\det \d\hat\Psi)|_{x^n=0} = 0.
\]
To analyze $k=2$, we notice first that
\[
\partial_n^3\hat \Psi^i =  -\partial_n\hat\Gamma^i_{nn}= -\partial_n \left(\delta_n^i \partial_n\mu - \frac12 g^{il}\partial_l \mu\right) = -\delta_n^i \mu_{nn}+\dots,
\]
where the dots represent a term involving lower order $\partial_n$ derivatives of $\mu$. Using this in \r{A2d}, we get
\[
\partial_n^2(\det \d\hat\Psi)|_{x^n=0} = -\mu_{nn}|_{x^n=0}.
\]
Reasoning as above, we see that
\be{A2j}
\partial_n^j(\det \d\hat\Psi)|_{x^n=0} = -\partial_n^j \mu|_{x^n=0}+\dots,
\ee 
where the dots represent terms involving normal derivatives of $\mu$ (possibly differentiated tangentially) up to order $j-1$.

We will analyze the normal derivatives of $\det(\hat g\circ\hat \Psi)$  in \r{Adet} now. 
 Since $\det \hat g = e^{n\mu}\det g$, we get
\be{Amu}
\begin{split}
 \partial_n \det(\hat g\circ \hat \Psi) &= \partial_n \left( e^{(n+1)\mu\circ \hat \Psi} \det g \circ \hat \Psi\right)\\
 & =   (n+1)\mu_n\det g+  \partial_{n} \det g  =  \partial_{n} \det g 
\quad\text{on $\partial M$}.
\end{split}
\ee
We used the fact that $\d\Psi=\Id$ on $\partial M$ and that $\partial_n\d\Psi=0$ since $\d\mu=0$ on $\partial M$.  
Therefore, $ \partial_n^j \det \hat g\circ \hat \Psi=  \partial_n^j \det  g $ on $x^n=0$ for $j=0,1$.

For the highest order derivatives, notice that $\partial_n^j \hat\Psi$ involves $\partial_n^{j-1}\mu$ as its highest order normal $\mu$ derivative, as the arguments leading to \r{A2j} show. Differentiating \r{Amu}, we therefore get 
\be{Amu2}\begin{split}
 \partial_n^j \det(\hat g\circ \hat \Psi)& = \partial_n^j \left( e^{(n+1)\mu\circ \hat \Psi} \det g \circ \hat \Psi\right)\\
 & =   (n+1)(\partial_n^j \mu) \det g+  \dots 
\quad\text{on $\partial M$},
\end{split}
\ee
where the dots have the same meaning as in \r{A2j}. 

Use \r{Adet} in combination with \r{A2j} and \r{Amu2} to get
\be{A3}
\partial_n^j (\det\hat  \Psi^* \hat g)|_{x^n=0}=    (n-1)(\partial_n^j \mu) \det g+  \dots,
\ee

To complete the proof of the lemma, we determine the normal derivatives of $\mu$ on $x^n=0$ for $j=2,\dots$. We get first $\partial_n^2 (\det\hat  \Psi^* \hat g)|_{x^n=0}= (n-1)\mu_{nn}|_{x^n=0} $, which needs to be equal to $r_2$; and can be solved for $\mu_{nn}$. Then we can determine the tangential derivatives of the latter. After that, we can solve \r{Amu2} with $j=3$ for $\mu_{nnn}$, etc. To complete the proof, we use Borel's lemma. 
\end{proof}

Let $g$ and $\tilde{g}$ be two metrics satisfying \textbf{(M1)} with the two diffeomorphisms $\Psi$ and $\tilde{\Psi}$ respectively as in Lemma \ref{seminormal}. Applying Lemma \ref{detvanish} to $S=\partial M$ and $p=x_0$, we can find a metric $\hat{g}:=e^{\mu}g$ with $\mu=0$, $\partial_\nu \mu=0$ on $\partial M$ such that under the semi-geodesic normal coordinates $(x^0,\dots,x^n)$ for $g$ we have
$$
\partial^j_n \det(\hat{\Psi}^\ast \hat{g}) = \partial^j_n \det(\tilde{\Psi}^\ast \tilde{g})  \quad\quad j=2,3,\dots.
$$
on $\partial M$. Notice that $(x^0,\dots,x^n)$ are also semi-geodesic normal coordinates for $\tilde{g}$ by \textbf{(M1)}.

Now consider the metrics $(\hat{\Psi}\circ \tilde{\Psi}^{-1})^\ast \hat{g}$ and $\tilde{g}$. These metrics have common semi-geodesic normal coordinates (see the argument following Lemma \ref{seminormal}), which are $(x^0,\dots,x^n)$. In these coordinates the choice of $\hat{g}$ yields
$$\partial^j_n \det (\tilde{\Psi}^\ast \circ (\hat{\Psi} \circ \tilde{\Psi}^{-1})^\ast \hat{g} ) = \partial^j_n \det (\hat{\Psi}^\ast \hat{g}) = \partial^j_n \det (\tilde{\Psi}^\ast \tilde{g}).$$
Thus we may replace $g$ by $(\hat{\Psi}\circ \tilde{\Psi}^{-1})^\ast \hat{g}$ and change $A$, $q$ accordingly as in Lemma \ref{changeofcoordinates} and Lemma \ref{conformalgauge} without affecting $\Lambda_{g,A,q}$. We therefore can assume that $g$ and $\tilde{g}$ satisfy not only \textbf{(M1)}, but also \\

\textbf{(M2)}: in the common semi-geodesic normal coordinates $(x',x^n)$,  
$$\partial^j_n \det g (x',0) = \partial^j_n \det \tilde{g} (x',0)   \quad\quad j=2,3,\dots.$$
Here we have identified the metrics with their coordinate representations under $\tilde{\Psi}$. \\

Thirdly, we make modifications to the $1$-form $A$. Again the modification does not change the gauge equivalence class of $\Lambda_{g,A,q}$ due to Lemma \ref{conformalgauge}.

\begin{lemma} \label{1formvanish}
Let $(M,g)$ be a Lorentzian manifold with boundary as above, let $A$ be a smooth $1$-form and $q$ be a smooth function on $M$. There exists a smooth functions $\psi$ with $\psi|_{\partial M}=0$ such that in the semi-geodesic normal coordinates $(x',x^n)$, $B:=A-d\psi$ satisfy
\begin{equation} \label{normalvanish}
\partial^j_n B_n (x',0) = 0  \quad\quad j=0,1,2,\dots.  
\end{equation}
\end{lemma}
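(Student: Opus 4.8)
The goal is to choose $\psi$ vanishing on $\partial M$ so that the normal component $B_n = A_n - \partial_n\psi$ of the gauged magnetic potential has all its $x^n$-derivatives vanishing on $\{x^n=0\}$. The plan is to solve this order by order in the normal variable and then invoke Borel's lemma to produce an actual smooth $\psi$.

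First I would write $\psi$ in semi-geodesic normal coordinates $(x',x^n)$ and compute its Taylor expansion in $x^n$: $\psi(x',x^n) = \sum_{k\ge 0} \psi_k(x') (x^n)^k/k!$, where $\psi_k(x') = \partial_n^k\psi(x',0)$. The constraint $\psi|_{\partial M}=0$ forces $\psi_0 = 0$. Then $\partial_n\psi(x',0) = \psi_1(x')$, and more generally $\partial_n^{j+1}\psi(x',0) = \psi_{j+1}(x')$. The condition \eqref{normalvanish} becomes the sequence of equations $\partial_n^j A_n(x',0) = \partial_n^{j+1}\psi(x',0) = \psi_{j+1}(x')$ for $j=0,1,2,\dots$. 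Thus we simply set $\psi_{j+1}(x') := \partial_n^j A_n(x',0)$, which are smooth functions of $x'$ near $x_0$ since $A$ is smooth, and $\psi_0 := 0$.

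Next I would apply Borel's lemma: there exists a smooth function $\psi$ defined near $x_0$ whose Taylor coefficients in $x^n$ at $\{x^n=0\}$ are exactly the prescribed $\psi_k(x')$. With this $\psi$, by construction $\partial_n^j B_n(x',0) = \partial_n^j A_n(x',0) - \partial_n^{j+1}\psi(x',0) = 0$ for all $j\ge 0$, which is \eqref{normalvanish}. Finally, since $\psi|_{\partial M} = 0$, Lemma \ref{conformalgauge} (with $\varphi=0$) shows $\Lambda_{g,A,q} = \Lambda_{g, A-d\psi, q}$, so the modification does not alter the DN map.

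The only mild subtlety — hardly an obstacle — is that $d\psi$ has tangential components as well, so replacing $A$ by $B = A - d\psi$ also changes the tangential components $A_\alpha$; but the lemma only asserts the vanishing of the normal derivatives of $B_n$, so this is harmless. One should also note that because $(x',x^n)$ are genuine semi-geodesic normal coordinates, the reduction is coordinate-honest and $\partial_n = \partial/\partial x^n$ is literally normal differentiation along unit-speed normal geodesics, so there is no hidden Christoffel-symbol correction entering the identification $\partial_n^{j+1}\psi|_{x^n=0} = \psi_{j+1}$. I expect the entire argument to be short and essentially formal.
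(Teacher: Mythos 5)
Your proof is correct and follows the same route as the paper: prescribe $\psi_0=0$ and $\psi_{j+1}(x')=\partial_n^j A_n(x',0)$, invoke Borel's lemma to produce a smooth $\psi$ with these normal Taylor coefficients, and conclude $\partial_n^j B_n(x',0)=0$. The paper states this more tersely but the content is identical.
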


\begin{proof}
We can find a smooth function $\psi$ with 
$$\psi(x',0)=0, \quad  \partial^{j+1}_{n} \psi(x',0) = \partial^{j}_{n} A_n(x',0), \quad j=0,1,2,\dots.$$ 
Extend it in a suitable manner so that $\psi\in C^{\infty}(M)$ with $\psi|_{\partial M}=0$. Then $B=A-d\psi$ satisfies \eqref{normalvanish}.
\end{proof}

As a result we may further assume\\

\textbf{(M3)}: in the common semi-geodesic normal coordinates $(x',x^n)$ of $g$ and $\tilde{g}$,  
$$\partial^j_n A_n (x',0) = \partial^j_n \tilde{A}_n (x',0) = 0 \quad\quad j=0,1,2,\dots.$$

\section{Boundary stability}\label{sec_BS} 
We choose the semi-geodesic coordinates $(x',x^n)$ near $x_0$ so that $x_0=0$,    $\partial M$  locally is given by $x^n=0$, and the interior of $M$ is given by $x^n>0$. 
Let $\xi^0 {}'$ be a future pointing timelike covector in $T^*_{x_0}\bo$ at $x_0$.  On Figure~\ref{DN_Lorentz_fig1}, the associated vector would look like $v'$ on the left, while the covector $\xi^0 {}'$ would have the opposite time direction, like the figure on the right. 
Let $\chi(x',\xi') $ be a smooth cutoff function with small enough support in $\mathcal{U}$ that equals to $1$ in a smaller conic timelike neighborhood of $(x_0,\xi^0 {}')$. Assume also that $\chi$ is homogeneous in $\xi'$ of order $0$.

For
\begin{equation} \label{f}
f(x')=e^{i\lambda x' \cdot \xi'}\chi(x',\xi'),
\end{equation} 
and for every $N>0$, we would like to construct a geometric optics approximation of the outgoing solution $u$ near $x_0$ in $M$ of the form
\be{u_N}
u_{N}(x):=e^{i\lambda \phi(x,\xi')} \sum^{N}_{j=0} \frac{1}{\lambda !} a_j(x,\xi').
\ee

The eikonal and the transport equations below are based on the following identity
\[
e^{-i\lambda\phi}P  e^{i\lambda\phi}=    -\lambda^2  g^{jk} (\partial_j \phi)( \partial_k\phi) + i \lambda P\phi+ 2i\lambda  
 g^{jk} \partial_j \phi (\partial_k -iA_k) + P .
\]

In $M$ near $x_0$, the phase function $\phi(x,\xi')$ solves the eikonal equation, which in the semi-geodesic coordinates takes the form
\begin{equation} \label{eikonal}
g^{\alpha\beta} \partial_\alpha \phi \; \partial_\beta \phi + \left(\partial_n \phi\right)^2 =0 , \quad\quad\quad \phi|_{x^n=0}=x'\cdot\xi'.
\end{equation}
With the extra condition $\partial_{\nu}\phi |_{\partial M}<0$, 
\eqref{eikonal} is locally uniquely solvable. Moreover, \eqref{eikonal} implies 
\begin{equation} \label{xinequal}
\partial_n \phi(x',0)=\xi_n(x',\xi')>0	\quad\quad\quad \text{ for any } (x',\xi')\in\mathcal{U},
\end{equation}
where
\begin{equation} \label{xin}
\xi_n(x',\xi') := \sqrt{-g^{\alpha\beta}(x) \xi_\alpha \xi_\beta }.
\end{equation}
Notice that the choice of the sign of $\xi_n$ makes $\xi$ a lightlike future-pointing covector, pointing into $M$. In Figure~\ref{DN_Lorentz_fig1}, the associated vector $v=g^{-1}\xi$ looks like $v_{\rm int}$ on the left.

We recall briefly the method of characteristics for solving the eikonal equation. We first determine $\partial\phi$ on $x^n=0$ to get \r{xinequal} or the same equation with a negative square root. We choose one of them, and in this case our choice is determined by the requirement that 
$\partial\phi$ points into $M$, see Figure~\ref{DN_Lorentz_fig1}.    Let now $(q_{x',\xi'}(s), p_{x',\xi'}(s))$ be the null bicharacteristic with $q_{x',\xi'}(0)=x'$, $p_{x',\xi'}(0)= (\xi',\xi_n)$. We think of $(x',s)$ as local coordinates and set $\phi(x',s)=x'\cdot\xi'$. More precisely, $\phi$ is uniquely determined locally by the requirement to be constant along the null bicharacteristics $q_{x',\xi'}$. 
Moreover,
\be{Ham}
p(s) = \nabla_x \phi(q(s),\xi').
\ee
Since by the Hamilton equations, $\dot q^i(s) =g^{ij}p_j(s)$,  we get in particular that $g^{ij} \partial_j\phi\partial_i$ is just the derivative $\partial/\partial s$ along the null bicharacteristic. 

In $M$ near $x_0$, the amplitudes $a_0$ and $a_j, j=1,2,\dots$ solve the following transport equations:
\begin{align}
Ta_0 = & 0,  \quad\quad a_0|_{x_n=0}=\chi; \label{transporta0}\\
iTa_j = & -P a_{j-1}, \quad\quad a_j|_{x_n=0}=0; \quad j\geq 1. \label{transportaj}
\end{align} 
where the operator $T$ is defined as
\begin{equation} \label{transportoperator}
T:=2 g^{jk} \partial_j \phi  \left( \partial_k - iA_k\right) + \Box_g\phi.
\end{equation}
We prefer to express  the bicharacteristics through the geodesics  $\Gamma(s) := (q_{x',\xi'}(s), p_{x',\xi'}(s))=  (\gamma_{x',\xi'}(s), g\dot \gamma_{x',\xi'}(s))$. Then along the bicharacteristics, we have 
\be{T}
T = 2\partial_s - 2\i \langle A,p(s)\rangle+  \Box_g\phi = 2 \mu\partial_s \mu^{-1},
\ee
with the integrating factor $\mu$  given by 
\begin{equation} \label{mu}  
\begin{split}
\mu (\Gamma(s) ) &=  \exp\bigg\{-\frac{1}{2} \int_0^s (\Box_g\phi) (\Gamma(\sigma) )\, \d \sigma \bigg\}\\
& \qquad \times\exp\bigg\{i \int_0^s \big\langle A \circ\gamma_{x',\xi'}(\sigma), \dot \gamma_{x',\xi'}(\sigma) \big\rangle \,\d \sigma\bigg\}.
\end{split}
\end{equation} 

The amplitudes $a_j, j=0,1,\dots$ are supported in a neighborhood of the characteristics issued from $x_0\in\partial M$ in the codirection $\xi(x_0)$. 
As a result, on some neighborhood of $x_0$, $u_N$ solves $Pu_N = O(\lambda^{-N})$, $u|_{\bo}=f$.

\begin{thm}\label{thm1a}
 $\Lambda_{g,A,q}^{\rm loc}$ is an elliptic {\rm \PDO} of order $1$ in $\mathcal{U}$. 
\end{thm}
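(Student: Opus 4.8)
The plan is to show that $\Lambda_{g,A,q}^{\rm loc}$ applied to the oscillatory data $f(x')=e^{\i\lambda x'\cdot\xi'}\chi(x',\xi')$ produces, modulo $O(\lambda^{-\infty})$, an expression of the form $e^{\i\lambda x'\cdot\xi'}\sum_j\lambda^{-j}b_j(x',\xi')$ with $b_0$ nonvanishing and homogeneous of degree $1$ in $\xi'$; this is precisely the statement that $\Lambda_{g,A,q}^{\rm loc}$ is an elliptic \PDO\ of order $1$ on the timelike cone in $\mathcal U$. The starting point is the geometric optics parametrix $u_N$ built in \r{u_N}: by construction $Pu_N=O(\lambda^{-N})$ near $x_0$ and $u_N|_{\bo}=f$, and since the singularities of the true outgoing solution propagate along the future pointing bicharacteristic issued from $(x_0,\xi^0)$ — which by locality stays away from $\bo$ except at $x_0$ — we have $u=u_N+O(\lambda^{-\infty})$ microlocally near $(x_0,\xi^0{}')$. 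Hence it suffices to compute $\left(\partial_\nu-\i\langle A,\nu\rangle\right)u_N|_{x^n=0}$.

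In the semigeodesic coordinates $x^n$ is the boundary defining function and $\nu=-\partial_n$ (outer normal), with $\langle A,\nu\rangle = -A_n$; by \textbf{(M3)} we may in fact take $A_n=0$ on $\bo$, so the magnetic term contributes nothing at leading order. Differentiating $u_N=e^{\i\lambda\phi}\sum_j\lambda^{-j}a_j$ in $x^n$ and restricting to $x^n=0$ gives
\be{DNcomp}
\partial_n u_N|_{x^n=0} = \i\lambda\,(\partial_n\phi)(x',0)\,e^{\i\lambda x'\cdot\xi'}a_0(x',0) + O(\lambda^0)\cdot e^{\i\lambda x'\cdot\xi'}.
\ee
By \r{xinequal}, $\partial_n\phi(x',0)=\xi_n(x',\xi')=\sqrt{-g^{\alpha\beta}\xi_\alpha\xi_\beta}>0$, and by \r{transporta0}, $a_0(x',0)=\chi(x',\xi')=1$ near $(x_0,\xi^0{}')$. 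Thus the leading symbol of $\Lambda_{g,A,q}^{\rm loc}$ is (up to the sign convention $\nu=-\partial_n$) $-\i\lambda\,\xi_n(x',\xi')$, i.e. $\mp\i|\xi'|_g$-type, which is homogeneous of degree $1$ and nonvanishing on the timelike cone — precisely ellipticity. The lower-order terms assemble into a full symbol: each $\partial_n$ falling on an $a_j$ or producing an extra factor of $x^n$-derivative of $\phi$ yields terms one order lower, and one reads off an asymptotic expansion $\sigma(\Lambda^{\rm loc})\sim \sum_{j\ge0}\lambda^{1-j}p_{1-j}(x',\xi')$ with $p_{1-j}$ homogeneous of degree $1-j$; this is the standard characterization of a classical \PDO\ of order $1$ via its action on oscillatory testing functions.

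The main technical point to be careful about is not the computation \r{DNcomp} but the justification that testing on the single-frequency family \r{f} actually characterizes $\Lambda_{g,A,q}^{\rm loc}$ as an honest \PDO: one must verify that the construction is uniform as $(x_0,\xi^0{}')$ ranges over a conic neighborhood — i.e. $\phi$, the $a_j$, and all error estimates depend smoothly and homogeneously on the parameter $\xi'$ — and that the remainders are $O(\lambda^{-\infty})$ in the appropriate seminorms so that the resulting symbol lies in $S^1_{1,0}$ of the timelike cone bundle. This uses local solvability of the eikonal equation \r{eikonal} with the sign condition $\partial_\nu\phi<0$ (equivalently \r{xinequal}), solvability of the transport equations \r{transporta0}--\r{transportaj} via the integrating factor \r{mu}, and the localization of each $a_j$ near the bicharacteristic, all of which are available from the preceding construction. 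Once uniformity and the remainder estimates are in place, the homogeneity and non-vanishing of $p_1(x',\xi')=\mp\i\xi_n(x',\xi')$ give ellipticity and complete the proof.
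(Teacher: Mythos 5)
Your argument is correct but routes differently from the paper. The paper directly constructs the parametrix as a Fourier integral in $\xi'$ applied to $\hat f$, namely the ansatz \r{ansatz-h} with amplitude $a \sim \sum_j a_j$ built from the same eikonal and transport equations, and then observes that because $\phi|_{x^n=0}=x'\cdot\xi'$, restricting $\partial_\nu u$ to the boundary yields an oscillatory integral in the standard \PDO\ normal form with full symbol $-\i\xi_n(x',\xi')-\partial_n a|_{x^n=0}$; ellipticity is read off from the principal part $-\i\xi_n$. You instead build the single-frequency WKB parametrix $u_N$ of \r{u_N} for $f=e^{\i\lambda x'\cdot\xi'}\chi$, compute $\partial_\nu u_N|_{\bo}$, and then invoke the fact that an operator with the right asymptotic output on all such oscillatory testing families is a classical \PDO\ of order one.

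Both approaches are valid. What you buy is a very concrete calculation; what you give up is that the step ``asymptotic action on oscillatory test functions $\Rightarrow$ \PDO'' is a nontrivial characterization that you flag but do not carry out. That characterization is available (the paper itself leans on \cite[Ch.~VIII.7]{Taylor-book0} for essentially this point, but only in the stability proof of Theorem~\ref{bdrystability}), yet the cleaner route here is the paper's: the Fourier representation \r{ansatz-h} exhibits $\Lambda^{\rm loc}_{g,A,q}$ as a \PDO\ with an explicit symbol directly, without needing the testing characterization or uniformity estimates as a separate step. Two minor remarks: your appeal to \textbf{(M3)} to kill $A_n$ on $\bo$ is legitimate via Lemma~\ref{conformalgauge} (it does not change $\Lambda$), but it is not needed even at subleading order since $\langle A,\nu\rangle u$ is order $0$ and just contributes $\i A_n$ to the full symbol; and your $\mp\i\xi_n$ reflects the sign for past pointing covectors, which matches the paper's closing remark that the argument is the same in that case.
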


\begin{proof}

Given $f\in \mathcal{E}'(U)$ (not related to \r{f}) with a wave front set as in the theorem, we are looking for an outgoing solution $u$ of $Pu=0$ near $x_0$, $u=f$ on $U$ of the form 
\be{ansatz-h}
u (x)= (2\pi)^{-n}\int e^{i\phi(x,\xi')} a(x,\xi') \hat f(\xi')\, \d\xi'.
\ee
The phase $\phi$ solves the eikonal equation \r{eikonal} and therefore coincides with $\phi$ there. We chose the solution which guarantees an outgoing $u$, which corresponds to the positive square root in \r{xin}.  
We are looking for an amplitude $a$ of the form $a\sim \sum_{j=0}^\infty a_j(x,
\xi')  $, where $a_j$ is homogeneous in the $\xi'$ variable of degree $-j$. The standard geometric optics construction leads to the transport equations \r{transporta0}, \r{transportaj}. Using the standard Borel lemma argument, we construct a convergent series for $a$. Then $u$ is the microlocal solution (up to a microlocally smoothing operator applied to $f$) that we used to define $\Lambda_{g,A,q}^{\rm loc}$. Then $\Lambda_{g,A,q}^{\rm loc}f = \partial u/\partial \nu|_U$. Since $\phi=x'\cdot\xi'$ on $U$, we get that $\Lambda_{g,A,q}^{\rm loc}$ is a \PDO\ with symbol
\[-
i\xi_n(x',\xi')-\partial_n a|_{x^n=0}.  
\]
In particular,  for the principal symbol we get
\be{N0}
\sigma_p\left( \Lambda_{g,A,q}^{\rm loc}\right)(x',\xi')  = -i \xi_n = -i \sqrt{-g^{\alpha\beta}(x') \xi_\alpha \xi_\beta }. 
\ee
The proof is the same if $(x',\xi')$ is past pointing. 
\end{proof}

We prove a stable determination result on the boundary next. Let $(g,A,q)$ and $(\tilde{g}, \tilde A, \tilde q)$ be two triples. Denote 
\be{delta}
\delta=\big\|\Lambda_{g,A,q}^{\rm loc}-\Lambda_{\tilde g,\tilde A,\tilde q}^{\rm loc}\big\|_{H^1(U)\to L^2(U)},
\ee
where, as above, $\Lambda_{g,A,q}^{\rm loc}$ and $\Lambda_{\tilde g,\tilde A,\tilde q}^{\rm loc}$ are the local DN maps associated with $(g,A,q)$  and $(\tilde{g}, \tilde A, \tilde q)$, respectively microlocally restricted to a fixed conic neighborhood $\mathcal{U}$ of a timelike future pointing $(x_0,\xi^0 {}')\in T^*U$ with $x_0\in U\subset\bo$. As above, we assume that $\xi^0 {}'$ is future pointing and timelike for both $g$ and $\tilde{g}$, and that $\mathcal{U}$ is small enough so that is included in the future timelike cone on $T^*U$ for both metrics. 
Therefore, in the theorem below, we need to know the DN map microlocally only near a fixed timelike covector on $T^*\bo$.

\begin{thm} \label{bdrystability} 

Let $(g,A,q)$  and $(\tilde{g}, \tilde A, \tilde q)$ be replaced by their gauge equivalent triples satisfying  \textbf{(M1)-(M3)}. Then for any $\mu<1$ and $m\geq 0$, and some open neighborhood $U_0\Subset U$ of $x_0$,
\begin{enumerate}
	\item $\displaystyle\sup_{x\in \overline{U}_0, |\gamma|\leq m} |\partial^\gamma (g-\tilde{g})| \leq C \delta^{\frac{\mu}{2^m}};$
	\item $\displaystyle\sup_{x\in \overline{U}_0, |\gamma|\leq m} |\partial^\gamma (A-\tilde{A})| \leq C \delta^{\frac{\mu}{2^{m+1}}};$
	\item $\displaystyle\sup_{x\in \overline{U}_0, |\gamma|\leq m} |\partial^\gamma (q-\tilde{q})| \leq C \delta^{\frac{\mu}{2^{m+2}}};$
\end{enumerate}
are valid whenever $g,\tilde{g}, A, \tilde{A}, q, \tilde{q}$ are bounded in a certain  $C^{k}$ norm in the semi-geodesic normal coordinates near $x_0$ with a constant $C>0$ depending on that bound with $k=k(m,\mu)$.  
\end{thm}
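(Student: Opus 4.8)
The plan is to exploit the fact, established in Theorem~\ref{thm1a}, that $\Lambda^{\rm loc}_{g,A,q}$ is an elliptic \PDO\ of order $1$ in the timelike cone $\mathcal{U}$, and to extract the boundary jets of $g,A,q$ from its full symbol. The key observation is that the symbol of $\Lambda^{\rm loc}_{g,A,q}$ has an expansion $\sum_{k} p_{1-k}(x',\xi')$ in decreasing homogeneous orders whose leading term is $-i\sqrt{-g^{\alpha\beta}(x')\xi_\alpha\xi_\beta}$ by \r{N0}; from the lower order terms one reads off successively $\partial_n g^{\alpha\beta}|_{x^n=0}$, then $\partial_n A_\alpha|_{x^n=0}$, then $\partial_n q|_{x^n=0}$, and then higher normal derivatives, provided one has first used the gauge normalizations \textbf{(M1)-(M3)}. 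Indeed, \textbf{(M1)} fixes the semi-geodesic normal coordinates so that the tangential jet of $g$ (and the $g_{nn}\equiv1$, $g_{\alpha n}\equiv0$ block structure) is automatically determined; \textbf{(M2)} controls $\partial_n^j\det g$ on the boundary, which removes the residual freedom in the trace of $\partial_n g_{\alpha\beta}$; and \textbf{(M3)} kills the normal component $A_n$ to all orders, so that only the tangential components $A_\alpha$ remain to be recovered and the gauge ambiguity $A\mapsto A-d\psi$ with $\psi|_{\bo}=0$ is used up. What has to be verified is that these are exactly the invariants that the symbol of $\Lambda^{\rm loc}$ determines: each normal derivative of a coefficient enters the transport equations \r{transporta0}-\r{transportaj} (hence the amplitude $a$) and the eikonal equation \r{eikonal} (hence $\xi_n$), and $-\partial_n a_j|_{x^n=0}$ contributes to the order $-j$ part of the symbol. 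One proceeds inductively: the order $1$ symbol gives $g^{\alpha\beta}(x',0)$; the order $0$ symbol, after using the structure of $T$ in \r{transportoperator}-\r{mu}, gives a combination of $\partial_n g$, $\partial_n\det g$ and $A$ on the boundary that, once \textbf{(M2)} is imposed, pins down $\partial_n g_{\alpha\beta}|_{x^n=0}$ and $A_\alpha(x',0)$; the order $-1$ symbol then yields $\partial_n^2 g$, $\partial_n A$, and $q$; and so on.

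The stability is then obtained by propagating a symbol estimate into a coefficient estimate. From the definition \r{delta} of $\delta$ and the calculus of \PDO s, the difference of symbols $\sigma(\Lambda^{\rm loc}_{g,A,q})-\sigma(\Lambda^{\rm loc}_{\tilde g,\tilde A,\tilde q})$, localized near $(x_0,\xi^0{}')$, is controlled in an appropriate seminorm by $\delta$ (up to the loss coming from the operator-norm rather than symbol-norm estimate). The standard device here is a trick going back to Sylvester--Uhlmann / Stefanov--Uhlmann: test $\Lambda^{\rm loc}_{g,A,q}-\Lambda^{\rm loc}_{\tilde g,\tilde A,\tilde q}$ against the oscillatory boundary data $f$ from \r{f} with large frequency $\lambda$, compute the leading asymptotics in $\lambda$ of $\langle(\Lambda^{\rm loc}_{g,A,q}-\Lambda^{\rm loc}_{\tilde g,\tilde A,\tilde q})f,\overline{f}\rangle$ or of the difference applied to $f$, and match powers of $\lambda$. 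Each power of $\lambda$ reveals one more coefficient in the symbol expansion; the $H^1\to L^2$ bound by $\delta$ gives, for the order-$j$ term, an estimate of the form (difference of $j$-th order jet) $\lesssim \delta\,\lambda^{j-1}$ plus lower-order errors of the form (already-estimated lower jets)$\times\lambda^{(\cdot)}$. Optimizing the choice of $\lambda$ as a power of $\delta$ — and here is the mechanism that produces the $\delta^{\mu/2^m}$, $\delta^{\mu/2^{m+1}}$, $\delta^{\mu/2^{m+2}}$ rates — yields H\"older control of the jet of $g$ first, then, feeding that back, of $A$, then of $q$; the halving of the exponent at each differentiation order is the familiar price of interpolating an $L^\infty$ bound on a derivative between a (weak) $\delta$-type bound and an a priori $C^k$ bound, and the extra factor $2$, resp.\ $4$, for $A$, resp.\ $q$, reflects that $A$ is recovered after $g$ and $q$ after both. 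Finally one passes from the jet estimate at the single boundary point $x_0$, in the semi-geodesic coordinates, to a uniform estimate on a neighborhood $U_0\Subset U$ by a standard covering/uniformity argument, using that all constants depend only on a fixed $C^k$ bound on the coefficients with $k=k(m,\mu)$.

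The main obstacle I expect is the bookkeeping in the symbol expansion: one must show explicitly that the order $1-j$ term of $\sigma(\Lambda^{\rm loc})$ determines $\partial_n^j$ of the coefficients \emph{modulo lower-order data}, with the "lower-order data" being precisely what the gauge conditions \textbf{(M1)-(M3)} have normalized. This requires (a) solving the eikonal equation \r{eikonal} to high order in $x^n$ and differentiating $\xi_n$ in the normal direction — which brings in $\partial_n^j g^{\alpha\beta}$ linearly at top order; (b) solving the transport equations \r{transporta0}-\r{transportaj} along the normal geodesics and extracting $\partial_n a_j|_{x^n=0}$, which, through $T=2\mu\partial_s\mu^{-1}$ in \r{T} and the integrating factor \r{mu}, brings in $\Box_g\phi$ (hence second normal derivatives of $g$) and $\langle A,\dot\gamma\rangle$ (hence $A$) and, at the next order, $q$; and (c) checking the triangular/invertible structure of the resulting linear system so that the inductive recovery actually closes. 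A secondary technical point is making rigorous the passage from the operator-norm bound \r{delta} to pointwise control of a finite piece of the symbol expansion: this is where testing against the explicit family \r{f} (rather than abstract symbol calculus) is essential, and where the exact power of $\lambda$ one can afford — hence the constant $\mu<1$ and the $2^m$ in the exponent — gets fixed. Once the linear-algebra structure and the $\lambda$-asymptotics of $(\Lambda^{\rm loc}_{g,A,q}-\Lambda^{\rm loc}_{\tilde g,\tilde A,\tilde q})f$ are in hand, the H\"older interpolation and the optimization over $\lambda$ are routine.
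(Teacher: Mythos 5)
Your proposal follows the same route as the paper: test $\Lambda^{\rm loc}_{g,A,q}-\Lambda^{\rm loc}_{\tilde g,\tilde A,\tilde q}$ against the oscillatory data \r{f}, read the geometric-optics expansion \r{DNmap} order by order in $\lambda$, optimize $\lambda$ as a power of $\delta$ at each order, and finish by interpolation. Two concrete devices that you leave under the heading of ``bookkeeping'' but which are in fact load-bearing and worth naming: first, at each order in $\lambda^{-1}$ the transport equation produces a \emph{sum} of a $g$-contribution (e.g.\ $\frac{\xi_n}{2\det g}\partial_n\det g-\frac{1}{2\xi_n}\partial_n g^{\alpha\beta}\xi_\alpha\xi_\beta$) that is \emph{even} in $\xi'$ and an $A$-contribution ($A^\alpha\xi_\alpha$) that is \emph{odd} in $\xi'$; the paper decouples them by parity in $\xi'$, and without this you cannot separate $\partial_n g$ from $A$ or $\partial_n^2 g$ from $\partial_n A$. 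Second, the passage from ``$(h^{\alpha\beta}\xi_\alpha\xi_\beta+A^\alpha\xi_\alpha=O(\delta^\theta)$ for timelike $\xi$'' to ``$h=O(\delta^\theta)$ and $A=O(\delta^\theta)$'' is not automatic: the paper isolates it as Lemma~\ref{mainlemma}, a linear-algebra statement that finitely many well-chosen covectors suffice, with a constant that is uniform on a neighborhood $U_0$ of $x_0$. At the $\partial_n^2 g$ / $q$ level one also needs the identity $\partial_n^j\log|\det g^{\alpha\beta}|=\partial_n^{j-1}(g_{\alpha\beta}\partial_n g^{\alpha\beta})$ together with \textbf{(M2)} to disentangle $q$ from the trace of $\partial_n^2 g$.

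One small misattribution: the $2^m$ in the exponent does \emph{not} come from the interpolation step. It comes from the iterated $\lambda$-optimization. At step $j$ the coefficient estimate has the shape $C(\lambda^j\delta+\lambda\cdot\varepsilon_{j-1}+\lambda^{-1})$ where $\varepsilon_{j-1}$ is the error from the previous step (the factor $\lambda$ appearing because the already-estimated terms sit one order higher in the $\lambda$-expansion); balancing $\lambda\varepsilon_{j-1}=\lambda^{-1}$ gives $\varepsilon_j=\varepsilon_{j-1}^{1/2}$, hence $\varepsilon_m\sim\delta^{1/2^m}$. The interpolation between the resulting $L^2$ bound and the a priori $C^k$ bound is what introduces the exponent $\mu<1$, not the halving.
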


\begin{proof}
We adapt the proofs in \cite{Carlos_12} and \cite{SU-IMRN} in the Riemannian setting. 
 Let $\Gamma_0$ be a small conic neighborhood of $\xi^0$. We can assume that $\chi=1$ on $U_0\times\Gamma_0$.  Let $f$ be as in \r{f}. We restrict $(x',\xi')$ to $U_0\times\Gamma_0$ below.  
 Since $\partial_\nu = -\partial_n$, the formal Dirichlet-to-Neumann map in the boundary normal coordinates $(x',x^n)$ is given by
\begin{equation} \label{DNmap}
\begin{split}
\Lambda_{g,A,q}^{\rm loc} f(x') =&-e^{i\lambda x'\cdot \xi'} \bigg( i\lambda \partial_n \phi (x',0,\xi') + \sum^{N}_{j=0} \frac{1}{\lambda^j} \left(\partial_n-iA_n \right)a_j (x',0,\xi) \bigg)\\
&\quad {}+O\left(\lambda^{-N-1}\right).
\end{split}
\end{equation}
The expression for $\Lambda_{\tilde{g}, \tilde{A}, \tilde{q}}f$ is similar, with $\phi$ and $a_j$ replaced by $\tilde{\phi}$ and $\tilde{a}_j$, respectively. 

The representation \r{DNmap} could be derived from \r{u_N} but since $u$ there is an approximate solution only, and we defined $\Lambda_{g,A,q}^{\rm loc}$ microlocally, we need to go back to its definition. To justify \r{DNmap}, notice that by \cite[Ch.~VIII.7]{Taylor-book0}, on the set $\chi=1$,   $  e^{-i\lambda x'\cdot \xi'} \Lambda_{g,A,q}^{\rm loc} f$ is equal to the full symbol of $\Lambda_{g,A,q}^{\rm loc}$ with $\lambda=|\xi|$  and $\xi$ in \r{DNmap} bounded, say, unit.  

In the following, $C$ denotes various constants depending only on $M$, $\chi$ in \r{f}, on the choice of $k\gg 1$ and on the a priori bounds of the coefficients of $P$ in $C^k$. 
Solving for $\partial_n \phi$ (resp. $\partial_n \tilde{\phi}$) in \eqref{DNmap} and taking the difference we obtain 
\begin{align*}
\partial_n \phi - \partial_n \tilde{\phi} =  & \frac{1}{i\lambda} \left( \Lambda^{\rm loc}_{g,A,q}f - \Lambda^{\rm loc}_{\tilde{g},\tilde{A},\tilde{q}}f \right) \\
 &{}+  \frac{1}{i\lambda} \sum^{N}_{j=0} \frac{1}{\lambda^j} \left[ \left(\partial_n a_j - \partial_n \tilde{a}_j \right) - i(A_n a_j - \tilde{A}_n \tilde{a}_j) \right] + O\left(\lambda^{-N-1}\right) 
\end{align*}
in $L^{2}(U_0)$. 
Integrating in $U_0$ yields 
\begin{equation} \label{estimate0}
\left\|\partial_n \phi - \partial_n \tilde{\phi} \right\|_{L^{2}(U_0)} \leq \frac{C}{\lambda} \delta \|f\|_{H^{1}(U_0)} + \frac{C}{\lambda}.
\end{equation}
The choice of $f$ in \eqref{f} indicates that $\|f\|_{H^{1}(U_0)} \leq C\lambda$. Thus, taking the limit $\lambda\rightarrow\infty$ yields
\begin{equation} \label{xinstability}
\|\xi_n-\tilde{\xi}_n\|_{L^{2}(U_0)} = \left\|\partial_n \phi - \partial_n \tilde{\phi} \right\|_{L^{2}(U_0)} \leq C\delta.
\end{equation}
From  relation \eqref{xin} we have
\begin{equation} \label{estimate1}
\left\| (g^{\alpha\beta}-\tilde{g}^{\alpha\beta}) \xi_\alpha \xi_\beta \right\|_{L^{2}(U_0)} = \|\xi^2_n - \tilde{\xi}^2_n\|_{L^{2}(U_0)} =  \left\|(\partial_n \phi )^2 - (\partial_n \tilde{\phi} )^2\right\|_{L^{2}(U_0)} \leq C\delta.
\end{equation}
It then follows from Lemma \ref{mainlemma} that by choosing appropriate timelike covectors $\xi'$, \eqref{estimate1} implies $\|g-\tilde{g}\|_{L^{2}(U_0)} \leq C\delta$. 
By interpolation estimates in Sobolev space and Sobolev embedding theorems, we have for any $m\geq 0$ and $\mu<1$ that
\begin{equation} \label{bdryg}
\|g-\tilde{g}\|_{C^{m}(\overline{U}_0)} \leq C \delta^{\mu}
\end{equation}
provided $k \gg 1$ is sufficiently large.

Second, we show that the first order normal derivatives of $g$ and the $1$-form can be stably determined on the boundary. From \eqref{DNmap} we have
\begin{align*}
 & (\partial_n - i\tilde{A}_n )\tilde{a}_0 -  (\partial_n - iA_n )a_0 =  e^{-i\lambda x'\cdot\xi'} \left( \Lambda_{g,A,q}f - \Lambda_{\tilde{g},\tilde{A},\tilde{q}}f \right)+ \nonumber \vspace{1ex} \\
 &  i\lambda  (\partial_n \phi - \partial_n \tilde{\phi} )  + \sum^{N}_{j=1} \frac{1}{\lambda^j} \left( \partial_n a_j - \partial_n \tilde{a}_j \right) + O\left(\frac{1}{\lambda^{N+1}}\right) \quad \text{ in } L^{2}(U_0).
\end{align*}
Estimate as in \eqref{estimate0} to obtain
$$
\left\|( \partial_n - iA_n )a_0 - ( \partial_n - i \tilde{A}_n )\tilde{a}_0 \right\|_{L^{2}(U_0)} \leq C(\delta+\lambda\delta+\frac{1}{\lambda})
$$
which holds for all $\lambda>0$. In particular, we may choose $\lambda=\delta^{-\frac{1}{2}}$ to minimize the right-hand side, then
\begin{equation} \label{a0normal}
\left\| ( \partial_n - iA_n )a_0 - ( \partial_n - i \tilde{A}_n )\tilde{a}_0 \right\|_{L^{2}(U_0)} \leq C \delta^{\frac{1}{2}}.
\end{equation}
In order to estimate the difference of first order normal derivatives of the metrics, we consider the transport equation in \eqref{transporta0}. Since $\chi\equiv 1$ for $x\in U_0$, it follows from the boundary condition in \eqref{transporta0} that $\partial_\alpha a_0=\partial_\alpha \chi=0$ for $\alpha=0,\dots,n-1$. Moreover, $g^{nj}=\delta^{nj}$ in the semi-geodesic coordinates, thus the transport equation in \eqref{transporta0} becomes
\begin{equation} \label{transporta01}
2\xi_n \left(\partial_n - iA_n \right) a_0 -2iA^\alpha \xi_\alpha + \frac{1}{\sqrt{-\det g}} \partial_n \left( \sqrt{-\det g} \partial_n \phi \right) + Q(g) = 0,
\end{equation}
where, as before, Greek indices range from $0$ to $n-1$ (but not $n$). 
Here $A^{\alpha}:=g^{\alpha\beta}A_\beta$, and $Q(g)$ is defined as follows which is a linear combination of tangential derivatives of $g$:
$$
Q(g) := \frac{1}{\sqrt{-\det g}} \partial_\alpha \left( \sqrt{-\det g} g^{\alpha\beta} \right) \xi_\beta.
$$
where we have used that $\partial_\beta \phi=\xi_\beta$ in $U_0$, $\beta = 0,\dots,n-1$. As a consequence of \eqref{bdryg}, 
\begin{equation} \label{Testimate}
Q(g) - Q(\tilde{g}) = O(\delta^{\frac{1}{2}}).
\end{equation}
Therefore, combining \eqref{a0normal} \eqref{transporta01} and \eqref{Testimate} we obtain
$$
\frac{1}{\sqrt{-\det g}} \partial_n \left( \sqrt{-\det g} \partial_n \phi \right) -
\frac{1}{\sqrt{-\det \tilde{g}}} \partial_n \left( \sqrt{-\det \tilde{g}} \partial_n \tilde{\phi} \right)
-2 i(A^\alpha-\tilde{A}^\alpha)\xi_\alpha
= O(\delta^{\frac{1}{2}}).
$$
Notice that
\begin{align*}
\frac{1}{\sqrt{-\det g}} \partial_n \left( \sqrt{-\det g} \partial_n \phi \right) = & \frac{1}{2\det g} \partial_n \det g \partial_n \phi + \partial^2_n \phi \\
= & \frac{\xi_n}{2\det g} \partial_n \det g - \frac{1}{2\xi_n} \partial_n g^{\alpha\beta} \xi_\alpha \xi_\beta
\end{align*}
is an even function of $\xi'$. Here in the computation $\partial_n \phi$ is substituted by $\xi_n$ due to \eqref{xinequal} and $\partial^2_n \phi$ is calculated by differentiating the eikonal equation \eqref{eikonal}. Separating the even and odd parts in $\xi'$ we conclude
\begin{equation} \label{gderivative}
\left( \frac{\xi_n}{2\det g} \partial_n \det g - \frac{1}{2\xi_n} \partial_n g^{\alpha\beta} \xi_\alpha \xi_\beta \right) - \left( \frac{\tilde{\xi}_n}{2\det \tilde{g}} \partial_n \det \tilde{g}  - \frac{1}{2\tilde{\xi}_n} \partial_n \tilde{g}^{\alpha\beta} \xi_\alpha \xi_\beta \right) = O(\delta^{\frac{1}{2}});
\end{equation}
\begin{equation} \label{1form}
(A^\alpha-\tilde{A}^\alpha)\xi_\alpha = O(\delta^{\frac{1}{2}}).
\end{equation}
For the odd part \eqref{1form}, applying Lemma \ref{mainlemma} yields
\begin{equation} \label{Abdry}
\|A-\tilde{A}\|_{L^{2}(U_0)} \leq C \delta^{\frac{1}{2}}.
\end{equation}
To deal with the even part, notice \eqref{gderivative} states that 
$$\frac{\xi_n}{2\det g} \partial_n \det g - \frac{1}{2\xi_n} \partial_n g^{\alpha\beta} \xi_\alpha \xi_\beta$$
is stably determined of order $O(\delta^{\frac{1}{2}})$. As $\xi_n$ is stably determined on $V$, see \eqref{xinstability}, their product
\begin{align*}
 \frac{\xi^{2}_{n}}{2\det g} \partial_n \det g &- \frac{1}{2} \partial_n g^{\alpha\beta} \xi_\alpha \xi_\beta \\
= & -\frac{1}{2\det g} (\partial_n \det g) g^{\alpha\beta}\xi_\alpha \xi_\beta - \frac{1}{2} \partial_n g^{\alpha\beta} \xi_\alpha \xi_\beta \\
= & -\frac{1}{2} \frac{1}{\det g} \partial_n \left( \det g \cdot g^{\alpha\beta}\right) \xi_\alpha \xi_\beta
\end{align*}
is also stably determined. Since $\det g$ is known to be stable and away from zero, it follows that $\partial_n h^{\alpha\beta}$ is stable where $h^{\alpha\beta}:=(\det g)g^{\alpha\beta}$. Hence, the normal derivative of $g=(\det h)^{\frac{1}{1-n}}h$ is also stably determined, that is,

\begin{equation} \label{gnormal}
\left\| \partial_n g - \partial_n \tilde{g}\right\|_{L^{2}(U_0)} \leq C\delta^{\frac{1}{2}}.
\end{equation}
Using interpolation and Sobolev embedding theorems, we obtain from \eqref{gnormal} and \eqref{Abdry} that for any $m\geq 0$ and $\mu<1$,
\begin{equation} \label{gA}
\left\| \partial_n g - \partial_n \tilde{g} \right\|_{C^{m}(\overline{U}_0)} + \|A-\tilde{A}\|_{C^{m}(\overline{U}_0)} \leq C\delta^{\frac{\mu}{2}}
\end{equation}
provided $k\gg 1$ is sufficiently large.

Next we show that the second order normal derivatives of $g$, the first order normal derivatives of $A$, and the values of $q$ can be stably determined on the boundary. By \eqref{DNmap} up to $\lambda^{-1}$ we obtain
$$
\left\| ( \partial_n - iA_n )a_1 - ( \partial_n - i \tilde{A}_n )\tilde{a}_1 \right\|_{L^{2}(U_0)} \leq C (\lambda^2 \delta + \lambda \delta^{\frac{1}{2}} + \lambda^{-1}).
$$
Choose $\lambda=\delta^{-\frac{1}{4}}$ to minimize the right-hand side. Then
\begin{equation} \label{a1normal}
\left\| ( \partial_n - iA_n )a_1 - ( \partial_n - i \tilde{A}_n )\tilde{a}_1 \right\|_{L^{2}(U_0)} \leq C \delta^{\frac{1}{4}}.
\end{equation}
Consider the transport equation \eqref{transportaj} for $a_1$. 
In the semi-geodesic coordinates this equation takes the form
\begin{equation} \label{transporta1}
2i\xi_n \left( \partial_n - iA_n \right) a_1 = - \partial^2_n a_0 +  q + O(\delta^{\frac{1}{2}}),
\end{equation}
where $O(\delta^{\frac{1}{2}})$ represents the stably determined terms of order $O(\delta^{\frac{1}{2}})$. (In fact, $a_1=0$ in these expressions by the boundary condition in \eqref{transportaj}, but it is left here for the convenience of tracking the corresponding terms.) From the estimates \eqref{xinstability} \eqref{gnormal} and \eqref{transporta1} it follows that
\begin{equation} \label{difference1}
( -\partial^2_n a_0 + \partial^2_n \tilde{a}_0 ) 
+ (q -\tilde{q} ) = O(\delta^{\frac{1}{4}}).
\end{equation}
To obtain an expression of $\partial^2_n a_0$, we differentiate the transport equation in \eqref{transporta0} and evaluate it on $U_0$:
\begin{align*}
\partial^2_n a_0 = & -\frac{1}{4\det g} \partial^2_n \det g - \frac{1}{2 \xi_n} \partial^3_n \phi + \frac{i}{\xi_n} g^{\alpha\beta} \partial_n A_\alpha \xi_\beta + O(\delta^{\frac{1}{2}}) \\
= & -\frac{1}{4\det g} \partial^2_n \det g + \frac{1}{4\xi^2_n} \partial^2_n g^{\alpha\beta} \xi_\alpha\xi_\beta + \frac{i}{\xi_n} g^{\alpha\beta} \partial_n A_\alpha \xi_\beta + O(\delta^{\frac{1}{2}}),
\end{align*}
where the $O(\delta^{\frac{1}{2}})$ terms are estimated by \eqref{bdryg} and \eqref{gA} and we have used that $\partial_n A_n (x',0) = 0$ in \textbf{(M3)}. Inserting this into \eqref{difference1} and separating the even and odd parts in $\xi'$ gives (notice that $\xi_n=\sqrt{-g^{\alpha\beta}}\xi_\alpha\xi_\beta$ is an even function of $\xi'$.):
\begin{align} \label{difference2}
\left( \frac{1}{4\det g} \partial^2_n \det g - \frac{1}{4\det \tilde{g}} \partial^2_n \det \tilde{g} - \frac{1}{4\xi^2_n} \partial^2_n g^{\alpha\beta} \xi_\alpha\xi_\beta + \frac{1}{4 \tilde{\xi}^2_n} \partial^2_n \tilde{g}^{\alpha\beta} \xi_\alpha\xi_\beta \right) \nonumber \\
 + (q -\tilde{q} ) = O(\delta^{\frac{1}{4}}).
\end{align}
\begin{equation} \label{difference3}
- \frac{i}{\xi_n}g^{\alpha\beta} \partial_n A_\alpha \xi_\beta + \frac{i}{\tilde{\xi}_n}\tilde{g}^{\alpha\beta} \partial_n \tilde{A}_\alpha \xi_\beta  = O(\delta^{\frac{1}{4}}).
\end{equation}

To deal with \eqref{difference3}, we multiply the two terms by $\xi_n$ and $\tilde{\xi}_n$ respectively. This is valid since $\xi_n$ is stably determined in \eqref{xinstability}. Then applying Lemma \ref{mainlemma} shows
$$
\left\| \partial_n A_\alpha - \partial_n \tilde{A}_\alpha \right\|_{L^{2}(U_0)} \leq C \delta^{\frac{1}{2}}.
$$

To deal with \eqref{difference2}, recall the following matrix identity which is valid for any invertible matrix $S$
$$
\partial \log |\det S| = \text{tr} (S^{-1} \partial S).
$$
Taking $S=g^{\alpha\beta}$ and applying $\partial^{j-1}_n$ we see that 
$$
\partial^j_n \log (-\det g^{\alpha\beta}) = \partial^{j-1}_n (g_{\alpha\beta} \partial_n g^{\alpha\beta}), \quad\quad j=1,2,\dots
$$
For $j=2$, it gives
$$
g_{\alpha\beta} \partial^2_n g^{\alpha\beta} = \partial^2_n \log (-\det g^{\alpha\beta}) - \partial_n g_{\alpha\beta} \partial_n g^{\alpha\beta}.
$$
The right-hand side is stably determined by \textbf{(M2)} and \eqref{gnormal}, we thus get on $U_0$ that
\begin{equation} \label{difference4}
g_{\alpha\beta} \partial^2_n g^{\alpha\beta} - \tilde{g}_{\alpha\beta} \partial^2_n \tilde{g}^{\alpha\beta} = O(\delta^{\frac{1}{2}}).
\end{equation}
On the other hand, remember that the two metrics $g$ and $\tilde{g}$ have been modified to satisfy \textbf{(M2)}, thus by \eqref{estimate1}
$$
\frac{1}{4 \det g} \partial^2_n \det g - \frac{1}{4 \det \tilde{g}} \partial^2_n \det \tilde{g} = \left( \frac{1}{4 \det g} - \frac{1}{4 \det \tilde{g}} \right) \partial^2_n \det g  = O(\delta).
$$
This together with \eqref{difference2} gives
\begin{equation} \label{difference5}
\left( - \frac{1}{4\xi^2_n} \partial^2_n g^{\alpha\beta} \xi_\alpha\xi_\beta + \frac{1}{4 \tilde{\xi}^2_n} \partial^2_n \tilde{g}^{\alpha\beta} \xi_\alpha\xi_\beta \right) + (q -\tilde{q} ) = O(\delta^{\frac{1}{4}}).
\end{equation}
Again we multiply the terms without the tilde by $\xi^2_n$ and those with it  by $\tilde{\xi^2_n}$, using \eqref{xin} we have
$$
(\partial^2_n g^{\alpha\beta} + 4 q g^{\alpha\beta} - \partial^2_n \tilde{g}^{\alpha\beta} - 4 \tilde{q} \tilde{g}^{\alpha\beta} )\xi_\alpha \xi_\beta = O(\delta^{\frac{1}{4}}).
$$
Lemma \ref{mainlemma} claims
$$
(\partial^2_n g^{\alpha\beta} + 4 q g^{\alpha\beta} ) - ( \partial^2_n \tilde{g}^{\alpha\beta} + 4 \tilde{q} \tilde{g}^{\alpha\beta} ) = O(\delta^{\frac{1}{4}}).
$$
Multiplying those terms without $\tilde{ }$ by $g_{\alpha\beta}$, those with $\tilde{ }$ by $\tilde{g}_{\alpha\beta}$, then summing up in $\alpha, \beta$ yields
$$
( g_{\alpha\beta} \partial^2_n g^{\alpha\beta} + 4nq ) - ( \tilde{g}_{\alpha\beta} \partial^2_n \tilde{g}^{\alpha\beta} + 4n\tilde{q} ) = O(\delta^{\frac{1}{4}}).
$$
From \eqref{difference4} we come to the conclusion that
$$
\|q-\tilde{q}\| _{L^{2}(U_0)} \leq C \delta^{\frac{1}{4}}.
$$
Inserting this into \eqref{difference5} and applying Lemma \ref{mainlemma} establishes
$$
\|\partial^2_n g^{\alpha\beta}-\partial^2_n \tilde{g}^{\alpha\beta}\| _{L^{2}(U_0)} \leq C \delta^{\frac{1}{4}}.
$$

Putting the estimates on $g,A,q$ together, we have established 
$$
\|\partial^2_n g -\partial^2_n \tilde{g}\| _{L^{2}(U_0)} + \left\| \partial_n A_\alpha - \partial_n \tilde{A}_\alpha \right\|_{L^{2}(U_0)} + \|q-\tilde{q}\| _{L^{2}(U_0)} \leq C \delta^{\frac{1}{4}}.
$$
As before, interpolation and the Sobolev embedding theorem lead to
$$
\|\partial^2_n g -\partial^2_n \tilde{g}\| _{C^{m}(\overline{U}_0)} + \left\| \partial_n A_\alpha - \partial_n \tilde{A}_\alpha \right\|_{C^{m}(\overline{U}_0)} + \|q-\tilde{q}\| _{C^{m}(\overline{U}_0)} \leq C \delta^{\frac{\mu}{4}}
$$
for $m>0$ and $\mu<1$.
Repeating this type of argument will establish the stability for higher order derivatives of $g,A,q$ on $U_0$.
\end{proof}

\section{Interior Stability}
\subsection{$\Lambda_{g,A,q}^{\rm gl}$ recovers the lens relation $\mathcal{L}$ in a stable way} 

\begin{thm}\label{thm1b}
Under the assumptions in the Introduction, 
 $\Lambda_{g,A,q}^{\rm gl}$ is an elliptic FIO of order $1$ associated with the (canonical) graph of $\L$. 
\end{thm}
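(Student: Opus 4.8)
The plan is to build a global geometric-optics parametrix for the outgoing solution along the fixed lightlike bicharacteristic from $(x_0,\xi^0)$ up to its second contact with $\bo$ at $y_0$, and then to read off $\Lambda_{g,A,q}^{\rm gl}$ as the normal derivative of that parametrix restricted to $V$. Locally near $x_0$, Theorem~\ref{thm1a} already tells us that the solution starts out as the oscillatory integral \r{ansatz-h} with phase $\phi$ solving the eikonal equation \r{eikonal} and amplitude $a\sim\sum a_j$ solving the transport equations \r{transporta0}, \r{transportaj}. First I would continue this solution microlocally along the null bicharacteristic $\Gamma(s)=(\gamma_{x_0,\xi^0}(s),g\dot\gamma_{x_0,\xi^0}(s))$. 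Since the phase $\phi$ becomes multivalued once caustics appear, the clean statement is that the canonical relation of the solution operator $f\mapsto u$ (microlocalized near $\Gamma$) is the flowout of the graph of the identity on $T^*U$ under the Hamilton flow of the principal symbol $p(x,\zeta)=g^{jk}\zeta_j\zeta_k$; this is standard propagation of singularities / Egorov-type theory for the wave equation and is exactly where we use that $(x_0,\xi^0{}')$ is not a fixed point of $\L$ so the ray leaves $\bo$ and returns transversally.

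Next I would analyze the second contact with $\bo$. Near $y_0$ the bicharacteristic hits $x^n=0$ transversally in the codirection $\eta^0$ with tangential projection $\eta^0{}'$; transversality of the intersection means the restriction of the canonical relation of the solution operator to $\{x^n=0\}$ on the target side is again a local canonical graph, and it is precisely the lens relation $\L:(x,\xi')\mapsto(y,\eta')$ defined in \r{L}. Concretely, the composition of (i) the map $(x,\xi')\mapsto(\gamma_{x,\xi'}(s),g\dot\gamma_{x,\xi'}(s))$ into $T^*\Mint$, (ii) restriction/pullback to the hypersurface $x^n=0$ at the return time, and (iii) the projection onto $T^*\bo$ (dropping the $\xi_n$ component, which is recovered from $\xi'$ by lightlikeness via \r{xin}), is a homogeneous canonical diffeomorphism from $\mathcal U$ to $\mathcal V$. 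Taking the normal derivative $\partial_\nu u - i\langle A,\nu\rangle u$ and cutting off near $y_0$ is an elliptic first-order operation transversal to $\bo$, so it does not change the canonical relation and contributes the factor $i\eta_n$ to the principal symbol, matching \r{N0} transported along the ray. Hence $\Lambda_{g,A,q}^{\rm gl}$ is an FIO of order $1$ with canonical relation the graph of $\L$.

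To get ellipticity I would track the principal symbol of the transported parametrix. Along the bicharacteristic, $T$ in \r{transportoperator}--\r{T} is the transport operator with integrating factor $\mu$ from \r{mu}, so $a_0$ never vanishes (it starts as the cutoff $\chi=1$ and is multiplied by a nowhere-zero exponential), and the half-density / Maslov factors that enter when passing through caustics are unimodular and hence nonvanishing. Therefore the principal symbol of $\Lambda_{g,A,q}^{\rm gl}$ at $(y,\eta')$ is, up to a nonzero factor, $i\eta_n=i\sqrt{-\tilde g^{\alpha\beta}\eta_\alpha\eta_\beta}$ times the transported amplitude, which is nonzero on $\mathcal V$; so the FIO is elliptic. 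The order is $1$ because the leading behavior is $\partial_\nu$ hitting the phase, exactly as in the \PDO\ computation in the proof of Theorem~\ref{thm1a}.

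The main obstacle I expect is the bookkeeping at caustics: between $x_0$ and $y_0$ the phase $\phi$ may cease to be globally defined, so the clean single-phase ansatz \r{ansatz-h} is only valid locally and one must either patch together local parametrizations (inserting Maslov factors) or argue abstractly that the solution operator is an FIO with the flowout canonical relation and then compose with restriction to $\bo$. Verifying that this composition is transversal (clean intersection with excess $0$), so that the composed canonical relation is again a local canonical graph equal to $\mathrm{graph}(\L)$ and the order adds correctly, is the technical heart; everything else — ellipticity, order, the explicit principal symbol — then follows from the transport equation analysis already set up in Section~\ref{sec_BS}.
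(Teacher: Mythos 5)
Your proposal is correct and captures the essential ideas, but it handles the caustic issue by a somewhat different bookkeeping device than the paper. Where you propose either inserting Maslov factors into a globally-patched oscillatory representation, or arguing abstractly that the forward solution operator is an FIO with the flowout canonical relation and then composing with restriction to $\{x^n=0\}$, the paper avoids both by a more hands-on decomposition: it introduces a finite chain of small timelike hypersurfaces $S_1,\dots,S_m$ transversal to the null geodesic, so chosen that on each segment the single-phase ansatz \r{ansatz-h} remains valid. On each segment it reads off, by differentiating $\phi$ (using \r{Ham}), that the trace map $F:f\mapsto u|_{S_j}$ is a local FIO whose canonical relation is the partial lens relation $\mathcal{L}_j$ between consecutive surfaces; then it composes $\mathcal{L}_m\circ\cdots\circ\mathcal{L}_1$ and applies the reflected-plus-incident normal derivative as in \r{u_inc} at the end, observing that this last step does not alter the projection of $\nabla_x\phi$ to the boundary. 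The two routes produce the same canonical graph, order, and ellipticity (your observation that $a_0$ is an integrating-factor exponential via \r{T}--\r{mu}, hence nowhere vanishing, is precisely what the paper reuses later in Lemma~\ref{lemma6}). What the paper's segmentwise method buys is that it never has to invoke Maslov/Keller indices or general clean-intersection composition calculus explicitly, at the modest cost of an additional induction on segments; what your abstract flowout argument buys is brevity and generality, provided one is willing to cite the composition theorem for FIOs with transversal restriction to a hypersurface. Your discussion of transversality --- that the bicharacteristic meeting $\bo$ transversally at $y_0$ is what makes the restricted relation a local canonical graph and prevents the two Lagrangians from meeting --- is exactly the non-tangency remark the paper makes immediately after the theorem statement, so no ideas are missing.
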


Note that we excluded lighlike covectors in $\WF(f)$. This excludes bicharacteristics (geodesics) tangent to $\bo$ carrying singularities of $u$. This is where the two Lagrangians (one of them being the diagonal) intersect. We also restricted $u$ to the first reflection and shortly after that. Without that, the canonical relations would contain powers of $L$. The theorem is a direct consequence of the geometric optics construction and propagation of singularities results for the wave equation and can be considered as  essentially known. 

As a consequence of Theorem~\ref{thm1b}, for every $s$, $\Lambda_{g,A,q}^{\rm loc}$ maps $H^s(U)$ into $H^{s-1}(U)$ and $\Lambda_{g,A,q}^{\rm gl}$ maps  $H^s(U)$ into   $H^{s-1}(V)$. Fixing $s=1$, one may conclude that the natural norms for those two operators are the $H^1\to L^2$ ones. While both operators are bounded in those norms, their dependence on the metric $g$ is not necessarily continuous if we stay in those norms. For $\Lambda_{g,A,q}^{\rm loc}$, we will see that the principal symbol (and the whole one, in fact) depends continuously on $g$; and in fact the whole  operator does, as well. On the other hand,  while the canonical relation of $\Lambda_{g,A,q}^{\rm gl}$ depends continuously on $g$, the operator itself does not. This observation was used in \cite{BaoZhang}, see also \cite{SUV-DNmap2014} for a discussion. 
 
\begin{proof}[Proof of Theorem~\ref{thm1b}] 
 We are still looking for a solution of the form \r{ansatz-h} with $f$ having a wave front set as in Theorem~\ref{thm1a}: in a conic neighborhood of a FP timelike $(x',\xi^0{}')$. 
Past pointing codirections can be handled the same way.  The solution is the same but we are now trying to extend it as far as possible away from $\bo$. We know that mircolocally, $u$ is supported in a small neighborhood of the null bicharacteristic (projecting to a null geodesic on $M$) issued from $(x_0,\xi^0)$ with $\xi^0$ future pointing with a projection ${\xi^0}'$ on the boundary, i.e., $\xi^0 = ({\xi^0}', \xi_n(x_0,\xi^0))$, where $\xi_n$ is given by \r{xin}, see Figure~\ref{DN_Lorentz_fig1}. This follows from the general propagation of singularities theory but in this particular case it can be derived from the fact that $T$ in \r{transportoperator} has its principal part a vector field along such null geodesics; and $\WF(u)$ can be analyzed directly with the aid of \r{ansatz-h}. 

Such a solution is guaranteed to exist only near some neighborhood of $x_0$ because the eikonal equation may not be globally solvable. On the other hand, the solution is still a global FIO applied to the boundary data $f$. It can also be viewed as a superposition of a finite number of local FIOs, each one having a representation of the kind \r{ansatz-h}. Indeed, we construct $u$ first near $\bo$. Then we restrict it to a timelike surface intersecting the null geodesic issued from $(x_0,\xi^0)$, and we chose that surface so that the geometric optics construction is still valid. We take the boundary data there, and solve a new similar problem, etc. By compactness arguments, we can cover the whole null geodesics until it hits $\bo$ again. We use this argument several times below. 

We will analyze first the map $F:f\mapsto u|_{S}$, where $S$ is a timelike surface as above, and the  \r{ansatz-h} is valid all the way to it, and a bit beyond it. 

Change the coordinates $x$ so that $S=\{x^n=1\}$. This can be done if $S$ is close enough to $\bo$. 
Then \r{ansatz-h} with $x=(x',1)$ is a local representation of the FIO 
$F$ and its canonical relation is given by (see, e.g., \cite[Ch.~VIII]{Taylor-book2})
\[
(\nabla_{\xi'} \phi|_{x^n=1},\xi')\quad \longmapsto\quad  (x',\nabla_{x'}\phi|_{x^n=1}).
\]
By \r{Ham},  with the momentum $p$ projected to $T^*\{x^n=1\}$, we get that this is the lens relation $\mathcal{L}_1$ from $\bo$ to $S$ (instead of the image being on $\bo$ again). 

We can repeat this finitely many steps by choosing $S_1$, $S_2$, etc., to get a composition of finitely many canonical relations, starting with $\mathcal{L}_1$, then $\mathcal{L}_2$ maps data on $S_1$ to $S_2$, etc. That composition of, say $m$ of them, gives the lens relation from $\bo$ to $S_m$. In the final step, we need to take a normal derivative and reflect the solution as in \r{u_inc}   below. This would not change the projection of $\nabla_x\phi$ on the boundary. This completes the proof.
\end{proof}

To prove stable recovery of the lens relation $\mathcal L$, we recall that the $H^1\to L^2$ norm of the DN maps  is not suitable for measuring how close the canonical relations $\mathcal L$ and $\mathcal{\tilde{ L}}$ of the FIOs $\Lambda_{g,A,q}^{\rm gl}$ and $\Lambda_{\tilde g,\tilde A,\tilde q}^{\rm gl}$ are. Instead, we formulate stability based on  measuring propagation of singularities. Given a properly supported \PDO\ $P$  on $\bo$ near $(y_0,\eta^0)$, with a principal symbol $p_0$, we consider $\Lambda^* P\Lambda$, where $\Lambda = \Lambda_{g,A,q}^{\rm gl}$. By the Egorov theorem, this is actually a \PDO\ near $(x_0,\xi_0)$ with a principal symbol $(p_0\circ\mathcal L)\lambda_0$, where $\lambda_0$ is the principal symbol of $\Lambda \Lambda^*$ which depends on $g$. In this way, we do not recover $\mathcal{L}$ directly; instead we recover functions of $\mathcal L$ for various choices of $p_0$, multiplied by $\lambda_0$. Choosing a finite number of $P$'s  satisfying some non-degeneracy assumption, we can apply the Implicit Function Theorem to recover $\mathcal L$ locally. In fact, we choose below the  differential operators 
\be{Pj}
\{P_j\} = \{1, y^0,\dots,y^{n-1}, \partial/\partial{y^0}, \dots, \partial/\partial{y^{n-1}}\}.
\ee



\begin{thm}\label{thm_LR_stab}
Let   $(y^0,\dots, y^{n-1})$  be local coordinates on $\bo$ near  $y_0$. 
   Let 
\be{53}
\begin{split}
\sum_{j=0}^{n-1} \left\| \Lambda^* y^j\Lambda -\tilde \Lambda^* y^j\tilde \Lambda\right\|_{H^2(U)\to L^2(U)}&\le\delta, \quad 
\left\| \Lambda^* \Lambda -\tilde \Lambda^* \tilde \Lambda\right\|_{H^2(U)\to L^2(U)}\le\delta,\\
\sum_{j=0}^{n-1} \left\| \Lambda^* \frac{\partial}{\partial y^j}\Lambda -\tilde \Lambda^* \frac{\partial}{\partial y^j}\tilde \Lambda\right\|_{H^3(U)\to L^2(U)}&\le\delta,
\end{split}
\ee
with $\Lambda := \Lambda_{ g, A, q}^{\rm gl}$, $\tilde \Lambda := \tilde \Lambda_{\tilde g,\tilde A,\tilde q}^{\rm gl}$. Assume that  $(g,A, q)$ and $(\tilde{g},\tilde  A,  \tilde{q})$ are $\epsilon$--close to a fixed tripple  $(g_0,A_0,q_0)$  in a certain  $C^{k}$ norm in the semi-geodesic normal coordinates near $x_0$ and near $y_0$. 
Then there exist $k>0$ and $\mu\in (0,1)$ so that 
\be{QF}
|(\mathcal{L} - \tilde{\mathcal{L}})(x,\xi')| \le C\delta^\mu\sqrt{- g(\xi',\xi')}, \quad \forall (x,\xi')\in\mathcal{U},
\ee
if $\mathcal{U}$ and $\epsilon>0$ are small enough. 
\end{thm}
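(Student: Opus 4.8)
The proof will combine the Egorov-theorem structure of $\Lambda^* P \Lambda$ with the quantitative version of the boundary stability already established in Theorem~\ref{bdrystability}, and then invert a suitable nondegenerate system via the Implicit Function Theorem. First I would recall that for a zeroth-order properly supported $\Psi$DO $P$ with principal symbol $p_0$ microlocalized near $(y_0,\eta^0)$, the composition $\Lambda^* P \Lambda$ is, by Theorem~\ref{thm1b} and Egorov's theorem, a $\Psi$DO near $(x_0,\xi^0{}')$ with principal symbol $(p_0\circ\mathcal{L})\,\lambda_0$, where $\lambda_0=\lambda_0(x,\xi',g)$ is the principal symbol of $\Lambda\Lambda^*$; for $P=\partial/\partial y^j$ one gets order-one operators and an extra factor linear in the fiber variable, which is why the norms in \eqref{53} are tuned to $H^2\to L^2$ and $H^3\to L^2$. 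The key point is that the principal symbol of a $\Psi$DO is recovered from its operator norm between the appropriate Sobolev spaces in a stable (Lipschitz-type) way: if $\|\Lambda^* P\Lambda - \tilde\Lambda^* P\tilde\Lambda\|\le\delta$ in the relevant norm, then the difference of the principal symbols is $O(\delta)$ pointwise on the cosphere bundle, uniformly on compact conic sets — this is the same mechanism used in the proof of Theorem~\ref{bdrystability} via \cite[Ch.~VIII.7]{Taylor-book0}.

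Carrying this out with the specific choices in \eqref{Pj}, I would write $\mathcal{L}(x,\xi')=(y(x,\xi'),\eta'(x,\xi'))$ and extract: from $P=1$ the quantity $\lambda_0$; from $P=y^j$ the products $y^j(x,\xi')\,\lambda_0(x,\xi')$, $j=0,\dots,n-1$; and from $P=\partial/\partial y^j$ the products $\eta'_j(x,\xi')\,\lambda_0(x,\xi')$ up to lower-order corrections coming from the subprincipal terms (these corrections involve $A$, $q$ and lower derivatives of $g$, all already controlled to order $\delta^{\mu'}$ by Theorem~\ref{bdrystability} on $U_0$, near both $x_0$ and $y_0$). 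Since $\lambda_0$ is elliptic and depends continuously and stably on the boundary jet of $g$ — which by Theorem~\ref{bdrystability} is $\delta^{\mu'}$-close to $\tilde\lambda_0$ — dividing the recovered products by $\lambda_0$ incurs only an $O(\delta^{\mu'})$ error and leaves $y^j(x,\xi')$ and $\eta'_j(x,\xi')$ determined up to $O(\delta^{\mu})$ for a slightly smaller exponent $\mu<\mu'$. Homogeneity of degree one of $\mathcal{L}$ in $\xi'$ then upgrades the cosphere-bundle estimate to the homogeneous bound \eqref{QF}, with the factor $\sqrt{-g(\xi',\xi')}$ accounting for the scaling; the closeness hypothesis to the fixed triple $(g_0,A_0,q_0)$ guarantees all constants are uniform and that the relevant covectors stay timelike in a fixed neighborhood.

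The main obstacle I anticipate is the bookkeeping of the lower-order (subprincipal) contributions in $\Lambda^*P\Lambda$ for $P=\partial/\partial y^j$: one must show that everything beyond the leading term $\eta'_j\lambda_0$ is already controlled by the boundary-jet stability of Theorem~\ref{bdrystability} (including the dependence of the Egorov conjugation on the full canonical relation, not just its principal part), so that no circularity arises — we are, after all, trying to recover $\mathcal{L}$, which is what governs the conjugation. The clean way around this is to work purely at the level of principal symbols, invoking the Egorov theorem only to the order needed, and to treat all subleading terms as a perturbation bounded by the already-established $\delta^{\mu'}$ boundary estimates together with the $\epsilon$-closeness to $(g_0,A_0,q_0)$. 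A secondary technical point is the ellipticity and nonvanishing of $\lambda_0$ (needed to divide): this follows from Theorem~\ref{thm1b} since $\Lambda^{\rm gl}$ is an elliptic FIO, hence $\Lambda\Lambda^*$ is an elliptic $\Psi$DO on the relevant timelike cone, so $\lambda_0$ is bounded away from zero on the compact sets in play once $\mathcal{U}$ is small. Assembling these pieces and applying the Implicit Function Theorem to the map $(x,\xi')\mapsto(y^j\lambda_0,\eta'_j\lambda_0,\lambda_0)$, whose differential is nondegenerate because $\mathcal{L}$ is a local diffeomorphism, yields \eqref{QF}.
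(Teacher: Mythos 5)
Your proposal follows essentially the same route as the paper: Egorov to identify the principal symbol of $\Lambda^* P\Lambda$ as $(p_0\circ\mathcal L)\lambda_0$, recovery of $\lambda_0$ from $P=1$ followed by division, principal-symbol stability from the $H^m\to L^2$ operator norm (this is Lemma~\ref{lemma_P}, proved via exponential test functions and $|\xi|\to\infty$), and interpolation to pass from $L^2$ to $C^0$ at the cost of $\mu<1$. One remark: the obstacle you flag about subprincipal contributions for $P=\partial/\partial y^j$, and your proposed fix of invoking Theorem~\ref{bdrystability}, is unnecessary and slightly off. The principal symbol of $\Lambda^*P\Lambda$ is exactly $(p_0\circ\mathcal L)\lambda_0$ with no subprincipal correction entering, and in the Lemma~\ref{lemma_P}-type extraction the lower-order terms are killed in the $|\xi|\to\infty$ limit, so no separate control over them is needed. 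Moreover, the $\delta$ in Theorem~\ref{bdrystability} measures the \emph{local} DN maps, not the quantity in \eqref{53} for the semi-global ones, so appealing to that theorem here would require an extra argument relating the two quantities that your sketch does not supply; the paper's argument sidesteps this entirely by working purely with principal symbols.
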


A few remarks: 
\begin{enumerate}[{(a)}] 
\item 
 The square root term is just a homogeneity factor. 
\item  The cotangent bundle $T^*\bo$ is not a linear space, therefore the difference $\mathcal{L} - \tilde{\mathcal{L}}$ makes sense in fixed coordinates only.  
\item The norms in \r{53} are the natural one since the operators we subtract there are \PDO s of order two and three, respectively. 
\item  The norms in \r{53} are equivalent to studying the quadratic form $(\Lambda f,P_j\Lambda f)- (\tilde \Lambda f,P\tilde \Lambda f)$. 

\item One could reduce the number of the $P_j$'s to $2n-2$; in fact, $P_0=1$ in \r{53} is not needed, as it follow from Remark~\ref{rem2}, since we can recover $\eta'/\eta_n$ and use the fact $\eta=(\eta',\eta_n)$ is a null covector. 
\end{enumerate}

We prove  Theorem~\ref{thm_LR_stab} in the end of this section. 

\subsection{Stable recovery of the light ray transforms of $A$ and $q$} 
Let, as in the Introduction, $\xi^0\in T_{x_0}M\setminus  0$ be the future-pointing lightlike co-vector whose projection on $T^\ast\partial M\setminus 0$ is the timelike co-vector $\xi^0{}'$ as in the definition of the semi-global DN map. Let $\gamma_0:= \gamma_{x_0,\xi^0{}'}$ be the lightlike geodesic issued from $(x_0,{\xi^0})$ which intersects $\partial M$ at another point $y_0$. 
Let $V$ be a neighborhood of  $y_0$ containing all endpoints of future pointing geodesics issued  from $\bar{\mathcal{U}}$.  
Choose and fix any parameterization of the lighlike geodesics close to  $\gamma_0$   by normalizing  $\xi{}'$. This defines a  hypersurface $\mathcal{U}_0$ in $\mathcal{U}$. 
The theorem below holds if $\mathcal{U}$ is a small enough neighborhood of $(x_0,\xi^0{}')$,  and therefore $\mathcal{U}_0$ is small enough, as well. Then $L_1$ and $L_0$ are well defined on $\mathcal{U}_0$.

\begin{thm} \label{intstability} Fix a Lorentzian metric $g$, and  $(x_0,\xi^0)$ satisfying the assumptions above. Let $(A,q)$ and $(\tilde A,\tilde q)$ be two pairs of magnetic and electric potentials. 
 Denote $\delta:=\|\Lambda_{g,A,q}^{\rm gl}-\Lambda_{ g,\tilde A,\tilde q}^{\rm gl}\|_{H^1(U)\to L^2(V)}$. Then  

\begin{enumerate}
	\item[(a)]  for any $\mu<1$ and $m\geq 0$, the following estimates are valid for some integer $N$ whenever $g, A, \tilde{A}, q, \tilde{q}$ are bounded in a certain $C^{k}$ norm
\[
\|L_1(A-\tilde A)-2\pi N \|_{C^m(\bar{\mathcal{U}}_0)} \leq C\delta^\mu.
\]

	\item[(b)] Under the a-priori condition $\|A-\tilde{A}\|_{C^1(\bar{\Omega})}\leq \delta_1$ for some $\delta_1>0$, 
	for any $0<\mu<\mu'$ and $m\geq 0$, the following estimate is valid whenever $g, A, \tilde{A}, q, \tilde{q}$ are bounded in a certain  $C^{k}$ norm
	$$
\| L_0 (q-\tilde{q}) \|_{C^m(\bar{\mathcal{U}_0})}  \leq C (\delta^\mu+ \delta_1^\mu).
  $$
\end{enumerate}
\end{thm}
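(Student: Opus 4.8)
The plan is to combine the global geometric optics construction of $\Lambda_{g,A,q}^{\rm gl}$ with Egorov's theorem, in the spirit of Theorem~\ref{thm_LR_stab}, but now extracting the \emph{principal and subprincipal symbols} of the FIO rather than its canonical relation. Recall from Theorem~\ref{thm1b} that $\Lambda_{g,A,q}^{\rm gl}$ is an elliptic FIO of order one associated with the graph of $\mathcal L$, and from the parametrix \r{ansatz-h} that its full amplitude is governed by the transport equations \r{transporta0}, \r{transportaj} with the operator $T$ in \r{transportoperator}. The key observation is that the dependence of the amplitude $a_0$ on $A$ enters \emph{only} through the integrating factor $\mu$ in \r{mu}, whose phase is exactly $\exp\{i\int_0^s \langle A\circ\gamma,\dot\gamma\rangle\,d\sigma\} = e^{iL_1A(\gamma_{[0,s]})/\,\cdot}$ evaluated along the segment of the geodesic. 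Thus, after composing $\Lambda_{g,A,q}^{\rm gl}$ with its adjoint (or with $(\Lambda_{\tilde g,\tilde A,\tilde q}^{\rm gl})^*$) and using that $g$ is \emph{fixed}, the canonical relations cancel and one is left with a \PDO\ on $\mathcal U_0$ whose principal symbol is $|\text{(amplitude factor depending on }g)|^2 e^{i(L_1A - L_1\tilde A)}$ — an elliptic zeroth-order \PDO\ whose principal symbol has modulus one and argument $L_1(A-\tilde A)$ (modulo $2\pi\mathbb Z$, hence the integer $N$).

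For part (a) I would proceed as follows. First, fix the parametrix normalization along $\gamma_0$ as in the statement (normalizing $\xi'$ defines $\mathcal U_0$). Compute $(\Lambda_{g,A,q}^{\rm gl})^*\Lambda_{g,\tilde A,\tilde q}^{\rm gl}$; by Egorov's theorem and the composition calculus for FIOs with canonical relations that are graphs of the \emph{same} canonical transformation $\mathcal L$ (since $g$ is the same), this is a classical \PDO\ of order zero on a neighborhood of $(x_0,\xi^0{}')$ in $\mathcal U_0$, with principal symbol $c(x,\xi')\, e^{i(L_1A-L_1\tilde A)(\gamma_{x,\xi'})}$ where $c$ depends only on $g$ and is elliptic. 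Subtracting the corresponding expression with $A$ in place of $\tilde A$ (namely $(\Lambda_{g,A,q}^{\rm gl})^*\Lambda_{g,A,q}^{\rm gl}$, which has principal symbol $c$) gives a \PDO\ whose $H^1\to L^2$, or more precisely whose principal symbol, is controlled by $\delta$. One then reads off $\bigl|e^{i(L_1A-L_1\tilde A)} - 1\bigr| \le C\delta$ pointwise on $\mathcal U_0$, which by the standard argument (the exponent is a smooth function, continuity in the parameters, connectedness of $\mathcal U_0$) forces a single branch $2\pi N$ and yields $\|L_1(A-\tilde A) - 2\pi N\|_{L^2(\mathcal U_0)}\le C\delta$; interpolation with the a-priori $C^k$ bound and Sobolev embedding upgrade this to the $C^m$ estimate with the loss $\delta\mapsto\delta^\mu$, exactly as in Theorem~\ref{bdrystability}. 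The honest bookkeeping here — that $\Lambda$ and $\tilde\Lambda$ live on possibly different geometries so one must first invoke Theorem~\ref{bdrystability} and the boundary-jet matching to make the composition calculus clean, and that the principal symbol of a \PDO\ is controlled by its $H^s\to L^2$ norm via the usual oscillatory-testing argument (cf.\ the proof of Theorem~\ref{bdrystability})—is the bulk of the work but is routine.

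For part (b), having already determined $A$ up to $L_1$-equivalence (and, crucially, under the a-priori assumption $\|A-\tilde A\|_{C^1}\le\delta_1$, which lets us treat the magnetic terms as a small, explicitly $\delta_1$-controlled perturbation in the transport equations), I would pass to the \emph{next} transport equation \r{transportaj} for $a_1$. Its inhomogeneous term is $-Pa_{0}$, and the symbol of order $-1$ in the amplitude of $\Lambda_{g,A,q}^{\rm gl}$ — equivalently the subprincipal symbol of the FIO — picks up $\int_0^s q\circ\gamma\,d\sigma = L_0q$ along the ray (together with $A$-dependent and $g$-dependent terms which, because $g$ is fixed and $A-\tilde A$ is $\delta_1$-small, contribute only $O(\delta_1)$ after subtraction). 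Composing with the adjoint as before and comparing the order-$(-1)$ part of the resulting \PDO, one gets $\|L_0(q-\tilde q)\|_{L^2(\mathcal U_0)}\le C(\delta+\delta_1)$, then interpolation and Sobolev embedding give the stated $C^m$ bound with $\delta+\delta_1$ replaced by $\delta^\mu+\delta_1^\mu$. The main obstacle — and the place where care is genuinely needed — is the second one: isolating the subprincipal symbol cleanly requires controlling how the choice of parametrix (the freedom in $a_1$, the reparameterization freedom of lightlike geodesics, the normalization via $Z$) affects $L_0q$, and verifying that all $A$-dependent contaminations in the $a_1$-transport equation are indeed absorbed into the $O(\delta_1)$ term rather than into an uncontrolled $O(\delta_1^0)$ error; this is why the a-priori hypothesis on $\|A-\tilde A\|_{C^1}$ is indispensable and why part (b) is stated conditionally.
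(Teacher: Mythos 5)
Your approach is in the same spirit as the paper's — convert the semiglobal FIO to a \PDO\ by composing with an operator whose canonical relation is $\mathcal{L}^{-1}$, then read off the symbol, which encodes $L_1A$ at order zero and $L_0q$ at order $-1$ via the integrating factor of the transport equation — but the concrete composition you use is different. You apply the $\Lambda^* \cdot \Lambda$ strategy the paper uses for Theorem~\ref{thm_LR_stab} (the lens relation), whereas the paper's Lemma~\ref{lemma6} and Lemma~\ref{lemma7} instead factorize $\Lambda^{\rm gl}_{g,A,q}=2N_0D$ modulo lower order (with $D$ a Dirichlet-to-Dirichlet map and $N_0$ the $A=q=0$ local DN map on $V$), and then study $D_0^{-1}N_0^{-1}\Lambda^{\rm gl}_{g,A,q}$, so that the $A$- and $q$-dependence is isolated entirely in the operator being probed rather than being present in both factors. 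By Remark~\ref{rem2} ($\Lambda^*\cong 2N_{\mathcal{U}}D^{-1}$) the two compositions are closely related, so your route is genuinely viable, but the paper's choice makes the transport-equation bookkeeping cleaner: the principal symbol of $D_0^{-1}D$ falls out of a scalar ODE comparison (the $y'+ay=0$ toy model in the paper), and the conjugate-point case is handled by chaining short geodesic segments with repeated Egorov, which you would also need to spell out.

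Two small inaccuracies to flag. First, $(\Lambda^{\rm gl}_{g,A,q})^*\Lambda^{\rm gl}_{g,\tilde A,\tilde q}$ is a \PDO\ of order two, not zero; the subsequent oscillatory-testing step then gives a worse exponent than the paper's $\delta^{1/2}$ (roughly $\delta^{1/3}$ for (a), still a positive power, so the conclusion survives). Second, for part (b) the ``order $(-1)$ part'' you invoke is the subprincipal level of a composition of two FIOs, and the composition calculus produces cross terms (derivative corrections, and the $a_1$-part of $\Lambda^*$ multiplying the $a_0$-part of $\tilde\Lambda-\Lambda$) that are not merely the difference of the two $a_1$'s; your appeal to the a-priori $\|A-\tilde A\|_{C^1}\le\delta_1$ bound does absorb these contaminations, matching the paper's explicit reduction to $\tilde A=A$ first, but it deserves to be written out rather than asserted. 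Finally, your suggestion to ``invoke Theorem~\ref{bdrystability} and the boundary-jet matching'' is unnecessary here: the metric $g$ is fixed in this theorem, so the canonical relations of $\Lambda$ and $\tilde\Lambda$ agree identically and no jet-matching is required.
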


If there are no conjugate points along $\gamma_0$, the proof can be done using a geometric optics construction of the kind \r{u_N} but with a different phase in \r{u_N} all the way along that geodesic and taking the normal derivative in $V$. Since we do not want to assume the no-conjugate points assumption, we will  proceed in a somewhat different way. 

The fact that we cannot rule out the case $N\not=0$ based on those arguments can be considered as a manifestation of the Aharonov-Bohm effect. If $\tilde A$ and $A$ are a priori close, then $N=0$. 

We start with a preparation for the proof of the theorem. 
Consider first the geometric optics parametrix of the kind \r{ansatz-h} of the outgoing solution $u$ like in the previous section. We assume that  the boundary condition $f$ has a wave front set in the timelike cone on the boundary, and for simplicity, assume that it is in the future pointing one ($\tau<0$ in local coordinates for which $\partial/\partial t$ is future pointing).  Assume at this point that the construction is valid in some neighborhood of the maximal $\gamma_0$. We microlocalize all calculations below there. All inverses like $D^{-1}$, etc., below are microlocal parametrices and the equalities between operators are modulo smoothing operators in the corresponding conic microlocal neighborhoods  depending on the context.

The construction is the same  to that in the previous section, but this time the outgoing solution $u$ is constructed near the bicharacteristic issued from $(x_0,\xi^0{}')$   all the way to $y_0$. Since the solution can reach the other side of the boundary, we need to reflect it at the boundary to satisfy the zero boundary condition. We write the solution $u$ as the sum of the incident wave $u^{\rm inc}$ and the reflected wave $u^{\rm ref}$: $u=u^{\rm inc}+u^{\rm ref}$ where
\be{u_inc}
\begin{split}
u^{\rm inc}(x) &=  (2\pi)^{-n}\int e^{\i\phi(x,\xi')}  \left( a^{\rm inc}_{0}+   a^{\rm inc}_{1} + R^{\rm inc} \right) \! (x,\xi')       \hat f(\xi')\, \d\xi',\\
u^{\rm ref}(x) &= (2\pi)^{-n}\int e^{\i\phi^{\rm ref} (x,\xi')}  \left( a^{\rm ref}_{0}+   a^{\rm ref}_{1} + R^{\rm ref} \right) \! (x,\xi')       \hat f(\xi')\, \d\xi'. 
\end{split}
\ee
Here the phase function $\phi^{\rm ref}$ solves the same eikonal equation as $\phi$ does but satisfies the boundary condition $\phi^{\rm ref}|_{V}=\phi$. It differs from $\phi$ by the sign of its normal derivative $\partial \phi^{\rm ref}/ \partial x^n = -\partial \phi^{\rm ref}/ \partial x^n>0$ on $x^n=0$. 
The amplitudes are of order $0$ and $-1$, respectively, and  satisfy
\begin{align*}
T^{\rm inc} a^{\rm inc}_{0} = 0, & \quad\quad a^{\rm inc}_{0}|_{U} = \chi, \\
T^{\rm ref} a^{\rm ref}_{0} = 0, & \quad\quad a^{\rm ref}_{0}|_{V} = -a^{\rm inc}_{0}|_{V}, \\
iT^{\rm inc} a^{\rm inc}_{1} = -P a^{\rm inc}_{0}, & \quad\quad a^{\rm inc}_{1}|_{U} = 0, \\
iT^{\rm ref} a^{\rm ref}_{1} = -P a^{\rm ref}_{0}, & \quad\quad a^{\rm ref}_{1}|_{V} = - a^{\rm inc}_{1}|_{V},
\end{align*} 
where $T^{\rm inc}$ and $T^{\rm ref}$  are the transport operators defined in \eqref{transportoperator}, related to the corresponding phase function, 
and the remainder terms  are of order  $-2$.

Replace $A$ and $\tilde A$ with their gauge equivalent field satisfying (M3) on $V$. This does not change their light ray transforms. 
  A direct computation, which can be justified as \r{DNmap}, yields
\begin{equation} \label{Lambda_f}
\begin{split}
\Lambda^{\rm gl}_{g,A,q}f &= (2\pi)^{-n}\int e^{\i \phi(x,\xi')} \left( 2\i (\partial_\nu \phi) a^{\rm inc}_{0}  2\i (\partial_\nu \phi) a^{\rm inc}_{1} + \partial_\nu (a^{\rm inc}_{0} + a^{\rm ref}_{0})+ a_{-1} \right) \hat f(\xi')\,\d\xi' ,
\end{split}
\end{equation}
where $a_{-1}$ is of order $-1$ and $\phi$ and the amplitudes are restricted to $x\in V$. 

The expression \r{Lambda_f} allows us to  factorize $\Lambda_{g,A,q}^{\rm gl}$ as $\Lambda_{g,A,q}^{\rm gl}= 2 N_0D$ modulo FIOs of order $0$ associated with the same canonical relation,   where  $Df$ is the trace of $u^{\rm inc}$ on $V$ (a ``Dirichlet-to-Dirichlet map'') and $ N_0$ is the DN map $\Lambda_{g,0,0}^{\rm loc}$ but localized in $V$. Note that replacing $A$ and $q$ in $N_0$ by zeros or not contributes to lower order error terms. 
Let $D_0$ be the operator $D$ related to $A=0$, $q=0$. Let $ N_0^{-1}$ and $D_0^{-1}$ be microlocal parametrices of those operators which are actually parametrices of the local Neumann-to-Dirichlet map and the incoming Dirichlet-to-Dirichlet one from $V$ to $U$. Then
\be{d1}
  D_0^{-1} N_0^{-1}  \Lambda^{\rm gl}_{g,A,q} = 2 D_0^{-1}D \quad \text{mod $S^{-1}$}
\ee
is a \PDO\ of order $0$. 

In the next lemma, we do not assume that the geometric optics construction is valid along the whole $\gamma_0$. 
\begin{lemma}\label{lemma6}
The operator   $ D_0^{-1}N_0^{-1} \Lambda^{\rm gl}_{g,A,q} $ is a \PDO\ of order zero   in $\mathcal{U}$ with principal symbol 
\be{l6}
2\exp\left\{ \i L_1A (\gamma_{x',\xi'}) \right\},
\ee
where $\gamma_{x',\xi'}$ is the future pointing lightlike geodesic issued from $x'$ in direction $\xi$ with projection $\xi'$. 
\end{lemma}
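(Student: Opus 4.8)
The plan is to verify that $D_0^{-1}N_0^{-1}\Lambda_{g,A,q}^{\rm gl}$ is a \PDO\ of order zero by composing the microlocal representations already available, and then to read off its principal symbol from the geometric optics amplitudes. By the factorization $\Lambda_{g,A,q}^{\rm gl} = 2N_0 D \bmod \Psi\text{DO's/FIO's of order }0$ established just above via \r{Lambda_f}, it suffices to show that $D_0^{-1}D$ is a \PDO\ of order zero and compute its principal symbol; the remainder terms of order $-1$ contribute nothing to the leading symbol. The key point is that $D$ and $D_0$ are FIO's with \emph{the same} canonical relation --- the lens relation (Dirichlet-to-Dirichlet from $U$ to $V$) --- since both are built from the same phase function $\phi$ solving the same eikonal equation \r{eikonal}; only the amplitudes (through the transport equations for $a_0^{\rm inc}$, which involve $A$ via $T^{\rm inc}$) differ. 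Therefore $D_0^{-1}D$ is a composition of an FIO with canonical relation $\mathcal L$ and one with canonical relation $\mathcal L^{-1}$, hence a \PDO.

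Next I would compute the principal symbol. Writing $Df(x) = (2\pi)^{-n}\int e^{\i\phi(x,\xi')}a_0^{\rm inc}(x,\xi')\hat f(\xi')\,\d\xi' \bmod S^{-1}$ restricted to $x\in V$, and likewise for $D_0$ with amplitude $a_{0,0}^{\rm inc}$ (the solution of the transport equation with $A=0$), the parametrix $D_0^{-1}$ has a representation with the reciprocal amplitude along the same characteristics. Composing, the principal symbol of $D_0^{-1}D$ at $(x',\xi')\in\mathcal U$ is the ratio $a_0^{\rm inc}/a_{0,0}^{\rm inc}$ evaluated at the endpoint $y = \pi\mathcal L(x',\xi')$ on $V$, pulled back to $(x',\xi')$. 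Now solve the two transport equations \r{transporta0} explicitly via the integrating factor \r{mu}: since $T^{\rm inc} = 2\mu\partial_s\mu^{-1}$ along the null geodesic $\gamma_{x',\xi'}$ with $a_0^{\rm inc}|_U = \chi$, one gets $a_0^{\rm inc}(\gamma(s)) = \chi\,\mu(\Gamma(s))/\mu(\Gamma(0))$, and the $A$-dependent part of $\mu$ in \r{mu} is exactly $\exp\{\i\int_0^s\langle A\circ\gamma,\dot\gamma\rangle\,\d\sigma\}$. The conformal/Laplacian factor $\exp\{-\tfrac12\int_0^s(\Box_g\phi)\,\d\sigma\}$ is common to both $a_0^{\rm inc}$ and $a_{0,0}^{\rm inc}$ and cancels in the ratio, leaving precisely $\exp\{\i\int_0^{s(y)}\langle A\circ\gamma,\dot\gamma\rangle\,\d\sigma\} = \exp\{\i L_1A(\gamma_{x',\xi'})\}$, where $s(y)$ is the parameter value at the second boundary hit. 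Multiplying by the factor $2$ from $\Lambda_{g,A,q}^{\rm gl} = 2N_0 D$ gives \r{l6}.

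Two points require care. First, one must check that $L_1A(\gamma_{x',\xi'})$ is well defined despite the reparameterization freedom of lightlike geodesics: this is exactly the observation in the Introduction that $L_1$ is reparameterization-invariant (it integrates $\langle A,\d\gamma\rangle$), while the particular parameter here is the one induced by the bicharacteristic flow, which is fixed once the initial covector $\xi = (\xi',\xi_n(x',\xi'))$ is fixed by \r{xin}. Second, and this is the main obstacle, one must handle the case where the geometric optics construction \r{ansatz-h} is \emph{not} globally valid along $\gamma_0$ (conjugate points), since the lemma explicitly drops that hypothesis. The remedy, as in the proof of Theorem~\ref{thm1b}, is to write $D$ as a finite composition of local FIO's $D = F_m\circ\cdots\circ F_1$, each $F_i$ a local geometric-optics map between nearby timelike hypersurfaces $S_{i-1},S_i$ transverse to $\gamma_0$ chosen so that \r{ansatz-h} is valid across each, and similarly $D_0 = F_{0,m}\circ\cdots\circ F_{0,1}$; then $D_0^{-1}D = F_{0,1}^{-1}\circ\cdots\circ F_{0,m}^{-1}\circ F_m\circ\cdots\circ F_1$, and the symbol ratios telescope because the $A$-independent amplitude factors match step by step and cancel, while the $A$-dependent factors multiply to give $\exp\{\i L_1A\}$ by additivity of the line integral over the concatenated segments. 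Keeping track of the order-$0$ FIO errors from the factorization and from each composition, and checking they do not pollute the principal symbol, is routine but is where the bookkeeping is heaviest.
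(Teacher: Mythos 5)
Your proposal is correct and follows essentially the same path as the paper's proof: factor $\Lambda^{\rm gl}_{g,A,q} = 2N_0D$ modulo lower order, reduce to the $\Psi$DO $D_0^{-1}D$, compute its principal symbol by comparing the transport equations with and without $A$ along the null bicharacteristic (the $\Box_g\phi$ factors cancel and $\exp\{\i L_1A\}$ survives), and handle conjugate points by factoring $D$ and $D_0$ into local Dirichlet-to-Dirichlet maps whose symbols are composed via Egorov's theorem. The paper phrases the symbol computation as an explicit ODE for the amplitude $b$ of a parametrix for $D_0^{-1}$, with initial data $b|_V=a^{\rm inc}|_V$, rather than as a ``reciprocal-amplitude ratio,'' but this is the same calculation.
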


\begin{proof}
By \r{d1}, we need to find the principal symbol of $ D_0^{-1}D$. 

The transport equation for $a^{\rm inc}_{0}$ is
$$
\left[2g^{jk} (\partial_j \phi) (\partial_k - \i A_k ) + \Box_g \phi \right] a^{\rm inc}_{0} = 0, \quad\quad a^{\rm inc}_{0}|_{U}=1.
$$
As explained right after \r{Ham}, $g^{jk} (\partial_j \phi)\partial_k$ is the tangent vector field along 
the null geodesic $\gamma_{x',\xi'}$.  Therefore, with $\Gamma(s) := (\gamma_{x',\xi'}(s), g\dot \gamma_{x',\xi'}(s))$, as before, on the set $\chi=1$ we get $a^{\rm inc}_{0}=\mu$, see \r{mu}, i.e., 
\begin{equation} \label{azeroinc}
\begin{split}
a^{\rm inc}_{0}(\Gamma(s) ) &=  \exp\bigg\{-\frac{1}{2} \int_0^s (\Box_g\phi) (\Gamma(\sigma) )\, \d \sigma \bigg\}\\
& \qquad \times\exp\bigg\{\i \int_0^s A_k \circ\gamma_{x',\xi'}(\sigma) \dot \gamma^k_{x',\xi'}(\sigma)  \,\d \sigma\bigg\}.
\end{split}
\end{equation}
Take $s=s(x,\xi)$ so that $\gamma_{x',\xi'}(s)\in V$ to get 
\[
(a_0^{\rm inc}\circ\mathcal{L})(x',\xi') = \exp\left\{-\frac12 L(\Box_g\phi)(x',\xi') +\i L_1A(x',\xi') \right\},
\]
where we use the coordinates $(x',\xi')$ to parameterize the lightlike geodesics locally, and the definition of  $L(\Box_g\phi)$ is clear from \r{azeroinc}. 

To construct a representation for $ D_0^{-1}$, note first that when $A=0$, the term involving $L_1A$ is missing above. We look for a parametrix of the incoming solution of $\Box_gu=0$ with boundary data $u=h$ on $V$ with $\WF(h)\subset \mathcal{V}$ of the form  
\be{ansatz-h0}
u (x)= (2\pi)^{-n}\int e^{\i\phi(x,\xi')} b(x,\xi') \hat f(\xi')\, \d\xi',
\ee
where $\phi$ is the same phase as in the first equation in \r{u_inc} and $f$ (not related to \r{f}) depending on $h$ as below. The amplitude $b$ solves the transport equation along the same bicharacteristics (with different coefficients since $A=0$, $q=0$) with the initial condition:
\[
b|_V = a^{\rm inc}|_V,
\]
where $a^{\rm inc}$ is the full amplitude in the first equation in \r{u_inc}. Restricted to $V$, the map $f\to u|_V$ is just $Df$. 
Then to satisfy $u=h$ on $V$, we need to solve $Df=h$, i.e., to take $f=D^{-1}h$ microlocally. 

To illustrate the argument below better, suppose that we are solving the ODE
\[
y'+ay=0, \quad y(0)=1
\]
from $t=0$ to $t=1$, where $a=a(t)$. Then we solve
\[
y_1'+a_1y_1=0, \quad y_1(1)=y(1),
\]
where $a_1=a_1(t)$. A direct calculation yields
\[
y(t) = \exp\Big\{ -\int_0^t a(s)\, \d s \Big\}, \quad y_1(t) = \exp\Big\{ -\int_1^t a_1(s)\, \d s \Big\}\, y(1).
\]
In particular,
\[
  y_1(0) = \exp\Big\{ -\int_0^1 ( a_1(s)-a(s))\, \d s \Big\}.
\]

We apply those argument to the transport equation to get
\[
b|_U =  \exp\big\{ \i L_1A(\gamma_{x',\xi'})\big\}.
\]
Then 
\[
D_0^{-1}Df = (2\pi)^{-n}\int e^{x'\cdot\xi'}  \exp\big\{ \i L_1A(\gamma_{x',\xi'})\big\} \hat f(\xi')\, \d\xi'.
\]
This proves the lemma under the assumption that the geometric optics construction is valid in a neighborhood of  $\gamma_0$.

	To prove the theorem in the general case, let $S_1, \dots, S_k$ be small timelike surfaces intersecting $\gamma_0$ in increasing order, from $U$ to $V$ so that the geometric optics construction is valid in a neighborhood of each segment of $\gamma_0$ cut by two consecutive surfaces of the sequence $\{U,S_1,\dots,S_k,V\}$. This determines Dirichlet-to-Dirichlet maps $D_1$, from $U$ to $S_1$; then $D_2$, from $S_1$ to $S_2$, etc., until $D_{k+1}$ from $S_k$ to $V$. Then $D=D_{k+1}D_k\dots D_1$. Similarly, $ D_0= D_{0,k+1} D_{0,k}\dots D_{0,1}$. Then \r{d1} is still valid and takes the form 
\[
  D_0^{-1} N_0^{-1}  \Lambda^{\rm gl}_{g,A,q} = 2 D_{0,1}^{-1}\dots   D_{0,k}^{-1}  D_{0,k+1}^{-1} D_{k+1} D_k\dots D_1 \quad \text{mod $S^{-1}$}.
 \]
By  Lemma~\ref{lemma6}, $D_{0,k+1}^{-1} D_{k+1} $ is a \PDO\ on $V$ with principal symbol $\exp\{\i L_1^{(k+1)}A\}$, where $L_1^{(k+1)}$ is the light ray transform $L_1$ restricted to geodesics between $S_k$ and $V$. Then we apply    Egorov theorem, see \cite[Theorem~25.2.5]{Hormander4}, to conclude that 
$  D_{0,k}^{-1} \big( D_{0,k+1}^{-1} D_{k+1}\big) D_k$ is a \PDO\ with a principal symbol that of $D_{0,k+1}^{-1} D_{k+1} $, pulled back by $\mathcal{L}_{k+1}$, the canonical relation between $S_k$ and $V$, multiplied by the principal symbol of $  D_{0,k}^{-1}  D_k$. The result is then \r{l6} without the factor $2$ with the integration between $S_k$ (through $S_{k+1}$) to $V$. Repeating this argument several times, we complete the proof of the lemma. 
\end{proof}

\subsection{Stability of the light ray transform of the magnetic field}
\begin{proof}[Proof of Theorem~\ref{intstability}(a)] 
We have 
\be{sa1}
\begin{split}
\big\| D_0^{-1} N_0^{-1}&\big( \Lambda^{\rm gl}_{g,A,q} -   \Lambda^{\rm gl}_{g,\tilde A,\tilde q}\big)\big\|_{H^1(U)}\\
& \le 
C\big\|\Lambda_{g,A,q}^{\rm gl}-\Lambda_{ g,\tilde A,\tilde q}^{\rm gl}\big\|_{H^1(U)\to L^2(V)}=C\delta.
\end{split}
\ee
Set $R:=  D_0^{-1} N_0^{-1} \big(\Lambda^{\rm gl}_{g,A,q} -   \Lambda^{\rm gl}_{g,\tilde A,\tilde q}\big)$. By Lemma~\ref{lemma6}, $R$ is a \PDO\ in $\mathcal{U}$ of order $0$ with principal symbol
\[
r_0(x',\xi') = 2\exp\left\{\i L_1(\tilde A - A)(\gamma_{x',\xi'})\right\},
\]
and we have $\|R\|_{ H^1(V)}\le C\delta$, by \r{sa1}. We need to derive that $r_0$ is ``small'' in $\mathcal{U}$, as well. We essentially did that in the proof of Theorem~\ref{bdrystability}. Choose $f$ as in \r{f}.  By \cite[Ch.~VIII.7]{Taylor-book0}, on the set $\chi=1$,   $  e^{-i\lambda x'\cdot \xi'} R f$ is equal to the full symbol of $\Lambda_{g,A,q}^{\rm loc}$ with $\lambda=|\xi|$  and $\xi$ in \r{DNmap} bounded, say, unit.  Therefore,
\be{q0}
r_0(x',\xi') = e^{-i\lambda x'\cdot \xi'}Rf + O(1/\lambda)
\ee
in $C^k$ for every $k$. 
Since $\|f\|_{L^2}=C$ and $\|f\|_{H^1}\sim\lambda$, \r{sa1} yields
\[
\|r_0(\cdot,\xi')\|_{H^1(U)}\le C\lambda\delta +C/\lambda,
\]
uniformly for $\xi$ in some neighborhood of $\xi^0{}'$. 
With a little more efforts one can remove $\lambda$ from $C\lambda\delta$ but this is not needed. Take $\lambda=\delta^{-1/2}$ to get 
\[
\Big\|\exp\left\{\i L_1(\tilde A - A)(\gamma_{x',\xi'})\right\}\Big\|_{H^1(\bar{\mathcal{U}}')}\le C\delta^{1/2}.
\]
Using interpolation estimates, we can replace the $H^1$ norm by any other one at the expense of lowering the exponent on the right from $1/2$ to another positive one, if $k$ in Theorem~\ref{intstability} is large enough. Since $|e^{iz}-1|<\eps$ implies $|z-2\pi  N|<C\eps$ for some integer $N$, this proves part (a) of the theorem. 
\end{proof}

\subsection{Stability of the light ray transform of the potential}
\begin{proof}[Proof of Theorem~\ref{intstability}(b)] 
First, we will reduce the problem to the case $\tilde A=A$. For $\Lambda^{\rm gl}_{g,\tilde A,\tilde q}- \Lambda^{\rm gl}_{g,A,\tilde q}$, we get a representation as in \r{Lambda_f} with a principal symbol with seminorms $O(\delta_1^{\mu'})$, since we can use interpolation estimates to estimate the higher derivatives of $\tilde A-A$. 
Apply a parametrix $\left(\Lambda^{\rm gl}_{g,A,\tilde q}\right)^{-1}$ to that difference to get a \PDO\ $Q$ of order $0$ microlocally supported in $\mathcal{U}$. If the geometric optics construction is valid all the way from $U$ to $V$, we get as in the proof of (a) that $Qf = O(\delta_1^{\mu'})+ O(1/\lambda)$ in $H^1$. This implies the same estimate for $\big\|\big(\Lambda^{\rm gl}_{g,\tilde A,\tilde q}- \Lambda^{\rm gl}_{g,A,\tilde q}\big)f\big\|_{L^2}$. 
In the general case, we can prove the same estimate as in the proof of (a). We will use this later and for now, we assume $\tilde A=A$.

\begin{lemma}\label{lemma7}
The operator   $   D^{-1} N_0^{-1}\left( \Lambda^{\rm gl}_{g,A,\tilde q} - \Lambda^{\rm gl}_{g,A,q}\right) $ is a \PDO\ of order $-1$ on $U$ with principal symbol 
\be{20}
2\left[ L_0(\tilde q-q)\right]\circ \gamma_{x',\xi'} ,
\ee
where $\gamma_{x',\xi'}$ is the future pointing lightlike geodesic issued from $x'$ in direction $\xi$ with projection $\xi'$. 
\end{lemma}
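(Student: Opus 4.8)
\textbf{Proof plan for Lemma~\ref{lemma7}.}

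The plan is to imitate the structure of the proof of Lemma~\ref{lemma6}, but now tracking the subprincipal (order $-1$) contribution coming from the amplitude $a_1$ rather than the principal one from $a_0$. First I would use the factorization $\Lambda^{\rm gl}_{g,A,q}=2N_0D$ modulo FIOs of order $0$, established in the discussion preceding Lemma~\ref{lemma6}, where $D$ is the incident Dirichlet-to-Dirichlet map carrying the full incident amplitude $a^{\rm inc}=a^{\rm inc}_0+a^{\rm inc}_1+\dots$. Applying the microlocal parametrices $N_0^{-1}$ and $D^{-1}$ (the latter built from the $A=A$, $q=q$ transport equation, i.e.\ the solution operator with the \emph{same} $A$ but reference potential, so that only the $q$-dependence survives) reduces the statement to computing the principal symbol of $D^{-1}\big(\tilde D - D\big)$ modulo $S^{-2}$, where $\tilde D$ is the Dirichlet-to-Dirichlet map associated to $(g,A,\tilde q)$. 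Note that since $A$ is the same on both sides, the $a_0$-transport equations \eqref{transporta0} coincide, so $a^{\rm inc}_0=\tilde a^{\rm inc}_0$ and the difference $\tilde D - D$ is a \PDO\ (composed with $D$) of order $-1$, governed entirely by the difference $\tilde a^{\rm inc}_1 - a^{\rm inc}_1$.

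Next I would solve the transport equations \eqref{transportaj} for $a^{\rm inc}_1$ and $\tilde a^{\rm inc}_1$ along the bicharacteristics. Using the form \eqref{T} of the transport operator, $iT a^{\rm inc}_1 = -Pa^{\rm inc}_0$ integrates explicitly: along $\Gamma(s)$ one gets $a^{\rm inc}_1(\Gamma(s)) = \tfrac{i}{2}\mu(\Gamma(s))\int_0^s \mu^{-1}(\Gamma(\sigma))\,(Pa^{\rm inc}_0)(\Gamma(\sigma))\,\d\sigma$, and similarly for $\tilde a^{\rm inc}_1$ with $q$ replaced by $\tilde q$. Since $P_{g,A,q}a^{\rm inc}_0 = P_{g,A,\tilde q}a^{\rm inc}_0 + (q-\tilde q)a^{\rm inc}_0$ and all other terms agree, the difference telescopes to $\tilde a^{\rm inc}_1(\Gamma(s)) - a^{\rm inc}_1(\Gamma(s)) = \tfrac{i}{2}\mu(\Gamma(s))\int_0^s \mu^{-1}(\Gamma(\sigma))(\tilde q - q)(\gamma(\sigma))a^{\rm inc}_0(\Gamma(\sigma))\,\d\sigma$. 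On the set $\chi=1$ we have $a^{\rm inc}_0=\mu$, so the integrand simplifies and, evaluating at $\Gamma(s)\in V$ and composing with the lens relation, the accumulated difference becomes $\tfrac{i}{2}\mu|_V \, L_0(\tilde q - q)(\gamma_{x',\xi'})$. Then I would run the ``model ODE'' argument exactly as in Lemma~\ref{lemma6}: applying $D^{-1}$ (which runs the $q$-free transport back from $V$ to $U$ with initial data the full amplitude $\tilde a^{\rm inc}|_V$, resp.\ $a^{\rm inc}|_V$) transports this difference back to $U$ without further change at leading order, yielding the principal symbol $2[L_0(\tilde q - q)]\circ\gamma_{x',\xi'}$ of $D^{-1}(\tilde D - D)$; the extra factor $2$ comes from the $N_0$ factor as in \eqref{d1}. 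Finally, for the general case where the geometric optics construction is only valid piecewise, I would subdivide $\gamma_0$ by timelike surfaces $S_1,\dots,S_k$ as in the end of the proof of Lemma~\ref{lemma6}, write $D=D_{k+1}\cdots D_1$, and apply Egorov's theorem \cite[Theorem~25.2.5]{Hormander4} repeatedly: each factor $D_{0,j}^{-1}D_j$ contributes (additively, since we are at the subprincipal level and composing order-$0$ \PDO s that are $\Id$ at the principal level up to the $a_0$ phase, which cancels) the light ray transform of $\tilde q - q$ over the corresponding segment, and the pullbacks by the intermediate canonical relations $\mathcal{L}_j$ reassemble these into $L_0(\tilde q - q)$ over the full geodesic.

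The main obstacle I anticipate is bookkeeping the order-$0$ elliptic factors correctly: the operators $D_{0,j}^{-1}D_j$ appearing in Lemma~\ref{lemma6} have principal symbol $\exp\{iL_1^{(j)}A\}$ which is \emph{not} $1$, so one must be careful that in the present lemma (where $A$ is the same on both sides) these elliptic factors cancel between $\tilde D$ and $D$ and do not contaminate the order-$(-1)$ symbol — this requires expanding $D^{-1}\tilde D - \Id = D^{-1}(\tilde D - D)$ and checking that the leading elliptic symbols agree so that the product lands in $S^{-1}$ with the claimed symbol, rather than in $S^0$. A secondary technical point is justifying that the $O(\delta^{1/2})$-type remainder terms and the reflected-wave contributions $a^{\rm ref}$ (which also differ between the two potentials) only affect lower-order symbols; this is handled because the reflection at $V$ and the $N_0$ factor contribute FIOs of order $0$ whose $q$-dependence is again one order down, exactly as remarked after \eqref{d1}.
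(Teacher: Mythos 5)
Your proposal follows the paper's proof essentially step for step: factor $\Lambda^{\rm gl} \cong 2N_0 D$, reduce to the principal symbol of $D^{-1}(\tilde D - D)$, integrate the transport equation for $a_1^{\rm inc}$ so that the $q$-independent parts (involving $P_{g,A,0}a_0^{\rm inc}$) cancel in the difference and leave only $\tfrac{\i}{2}\,a_0^{\rm inc}\,L_0(\tilde q - q)$, transport back with $D^{-1}$ to kill the elliptic $a_0^{\rm inc}$ factor, and handle the general (possibly conjugate-point) case by cutting $\gamma_0$ with intermediate timelike surfaces and applying Egorov's theorem to the telescoping decomposition $D = D_{k+1}\cdots D_1$. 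The one small slip is in your general-case sentence, where you write ``each factor $D_{0,j}^{-1}D_j$ contributes additively'': that is the decomposition from Lemma~\ref{lemma6}, whose pieces have nontrivial principal symbol $\exp\{\i L_1^{(j)}A\}$; here one must use $D_j^{-1}(\tilde D_j - D_j)$ as the paper does (so the $A$-dependent elliptic factors cancel between $\tilde D_j$ and $D_j$), which is exactly what your own ``main obstacle'' paragraph then correctly identifies and resolves. With that self-correction taken into account, the argument matches the paper's.
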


\begin{proof}
Assume first that the geometric optics construction is valid in a neighborhood of the whole $\gamma_0$. In the amplitude 
\[
 -2\i (\partial_\nu \phi) a^{\rm inc}_{0} - 2\i (\partial_\nu \phi) a^{\rm inc}_{1} + \partial_\nu (a^{\rm inc}_{0} + a^{\rm ref}_{0})+ a_{-1} 
\]
in \eqref{Lambda_f},  the terms  $-2\i (\partial_\nu \phi) a^{\rm inc}_{0}$ and  $  \partial_\nu (a^{\rm inc}_{0} + a^{\rm ref}_{0})$ do not depend on $q$, see \r{azeroinc}. The other two terms depend on $q$ but they are of different orders. Therefore,
\be{sa2}
\begin{split}
\Big(\Lambda^{\rm gl}_{g, A,\tilde q}- \Lambda^{\rm gl}_{g,A,q}\Big)f& =  (2\pi)^{-n}\int e^{\i \phi(x,\xi')} \Big( - 2\i (\partial_\nu \phi) (\tilde a^{\rm inc}_{1} - a^{\rm inc}_{1} )  +(a_{-1}- \tilde a_{-1}) \Big) \hat f(\xi')\,\d\xi'|_V .
\end{split}
\ee
The order of the FIO above is zero. 
As in the previous proof, we can represent this as a composition of $2 N_0$ with the operator $\tilde D-D$ (the difference of two such Dirichlet-to-Dirichlet maps):
\be{a3aa}
\Lambda^{\rm gl}_{g, A,\tilde q}- \Lambda^{\rm gl}_{g,A,q}= 2 N_0(\tilde D-D)
\ee
modulo FIOs of order $-2$. 
That operator $\tilde D-D$ is an FIO with a symbol, compare with \r{sa2},
\be{a3a}
\sigma(\tilde D-D) = - 2\i (\tilde a^{\rm inc}_{1} - a^{\rm inc}_{1} )  +a_{-2},
\ee
with $a_{-2}$  of order $-2$. 

To compute $a^{\rm inc}_{1}$, recall  the transport equation for $a^{\rm inc}_{1}$ 
\be{a4}
\left[2g^{jk} \partial_j \phi (\partial_k-\i A_k ) + \Box_g \phi \right] a^{\rm inc}_{1} = \i Pa^{\rm inc}_{0}, \quad\quad a^{\rm inc}_{1}|_{U}=0
\ee
where
\[
\i Pa^{\rm inc}_{0}= \i P_{g,A,0} a^{\rm inc}_{0} + \i qa^{\rm inc}_{0}. 
\]
The first term on the right is independent of $q$. By \r{T}, \r{mu}, with  $\Gamma(s)$ as in \r{azeroinc}, we get 
\be{a5}
\begin{split}
a^{\rm inc}_{1}\left(\Gamma(s)\right) &= \frac{\i a^{\rm inc}_{0}}{2} \int_0^s\frac1{a^{\rm inc}_{0}} \left[ P_{g,A,0} a^{\rm inc}_{0}+ qa^{\rm inc}_{0}\right]\circ\Gamma(\sigma) \,\d \sigma\\
& =\frac{\i a^{\rm inc}_{0}}{2} \int_0^s\Big[ \frac1{a^{\rm inc}_{0}}  P_{g,A,0} a^{\rm inc}_{0}+ q\Big]\circ\Gamma(\sigma) \,\d \sigma . 
\end{split}
\ee
The potential $q$ depends on $x$ only,  so $q\circ\Gamma(s)= q\circ\gamma(s)$. In \r{a5}, only the last term depends on $q$ and is an integral of $q$ over lightlike geodesics multiplied by an elliptic factor. Note that the integral, as well as $a^{\rm inc}_{1}$, are homogeneous of order $-1$ in $\xi'$, as they should be. 

We go back to \r{a3a} now. Using \r{a5},  the terms involving $P_{g,A,0}$ and $P_{g,\tilde A,0}$ cancel below and we get 
\be{a6}
\sigma(\tilde D-D)\circ \mathcal{L} = \i  a^{\rm inc}_{0}  L_0(\tilde q-q)   +a_{-2},
\ee
where $a_{-2}$ is a symbol of order $-2$, different form the one above. 

Similarly to \r{d1}, we have
\be{a7}
  D^{-1} N_0^{-1}\left( \Lambda^{\rm gl}_{g,A,\tilde q} - \Lambda^{\rm gl}_{g,A,q}\right) = 2 D^{-1}(\tilde D-D) \quad \text{mod $S^{-2}$}. 
\ee
%
Therefore, we need to compute the principal symbol of $2D^{-1}(\tilde D-D)$. Let $R$ be a \PDO\ in $U$ with principal symbol $r_{-1}$ given by  \r{20}. Then, in $\mathcal{U}$, $DR$ is an FIO of the type \r{ansatz-h0} with $x\in V$ with the same phase function and a principal amplitude $b_0$ solving $Tb_0=0$, $b_0|_U  = r_{-1}$. By \r{T}, the solution restricted to $x\in V$ is given by $\mu r_{-1}\circ \mathcal{L}^{-1}|_{V}$. Recall that $\mu=a_0^{\rm inc}$.  By \r{a6}, this is $2\sigma(\tilde D-D)$ modulo symbols of order $-2$. Therefore, $DR= 2(\tilde D-D)$ modulo FIOs of order $-2$. This proves the lemma under the assumption that the geometric optic construction is valid along the whole $\gamma_0$. 

In the general case, we repeat the arguments of Lemma~\ref{lemma6}. We represent $ D$ and $\tilde D$ as a composition $ D=D_{k+1}\dots D_1$, and similarly for $\tilde D$. We will do the first step. Consider $2(D_2D_1)^{-1}(\tilde D_2\tilde D_1 -D_2D_1)$. We have
\[
\begin{split}
2(D_2D_1)^{-1}&(\tilde D_2\tilde D_1 -D_2D_1)\\
&= 2D_1^{-1}D_2^{-1}\left(  (\tilde D_2 - D_2)\tilde D_1 + D_2(\tilde D_1-D_1)\right)\\
& = D_1^{-1}R_2\tilde D_1 + R_1 = D_1^{-1}R_2 D_1 + R_1
\end{split}
\]
modulo FIOs of order $-2$, where $R_j =2D_j^{-1}(\tilde D_j-D_j)$, $j=1,2$.  We apply Egorov's theorem to $D_1^{-1}R_2 D_1 $ to conclude that it is a \PDO\ on $U$ with a principal symbol equal to the sum of two terms as in \r{20} with $L_0$ taken over the geodesic segments between $U$ and $S_1$ first, and $S_1$ and $S_2$ second. The sum is equal to \r{20} with $L_0$ taken over the union of those segments. Repeating this arguments to include $D_2$, etc., completes the proof of the lemma.
\end{proof}

We finish the proof of part (b) as we did that for part (a). Set $R=  D^{-1} N_0^{-1}\left( \Lambda^{\rm gl}_{g,A,\tilde q} - \Lambda^{\rm gl}_{g,A,q}\right)$. It is a \PDO\ of order $-1$ rather than of order $0$ as in (a). The analog of \r{sa1} is still true. If, as above, $r_{-1}$ is the principal symbol of $R$, then by Lemma~\ref{lemma7},
\[
r_{-1}(x',\xi') =  -2\i [L_0(\tilde q-q)]\circ\gamma_{x',\xi'} =\lambda e^{-\i\lambda x'\cdot\xi'}Rf+O(1/\lambda)
\]
with $f$ as in \r{f}, compare with \r{q0}. Then
\[
\|r_{-1}(\cdot,\xi')\|_{H^1(U)}\le C\lambda^2\delta+C/\lambda.
\]
Choose $\lambda=\delta^{-1/3}$ to get $\|r_{-1}(\cdot,\xi')\|_{H^1(U)}\le C\delta^{1/3}$. This completes the proof of the theorem. 
\end{proof}
\subsection{Proof of the stable recovery of  the lens relation}

\begin{proof}[Proof of Theorem~\ref{thm_LR_stab}]
We use the notation above. 
Recall the remark preceding Theorem~\ref{thm_LR_stab} above. 
The operator $\Lambda^* P \Lambda$ is a \PDO\ with a principal symbol $(p_0\circ\mathcal{L})\lambda_0 $. 
Take $p = p_0=1$ as in \r{Pj} to recover $\lambda_0$ first. Knowing the latter, we recover $p_j\circ\mathcal{L}$ for $j=1,\dots 2n-1$, see \r{Pj}.  That gives us $(y,\eta')$ in \r{L} as functions of $(x,\xi')$. 
 Therefore, we reduce the stability problem to the following: show that the principal symbol of a \PDO\ $A$ of order $m$ is determined by $A:H^m\to L^2$ in a stable way which is resolved by the lemma below, see also \r{estimate0}, \r{xinstability}. Note that the lemma is a bit more general than what we need since $\{P_j\}$ are simple multiplication and differentiation operators. 
 
\begin{lemma}\label{lemma_P}
Let $A$ be  \PDO\ in $\R^n$ with kernel supported in $K\times K$, where $K\subset\R^n$ is compact. Let $p_m$ be its principal symbol homogeneous of order $m$. Then
\[
|p_m(x,\xi)|\le C|\xi|^{m}\|P\|_{H^m\to L^2} 
\]
with $C>0$ depending on $K$ only. 
\end{lemma}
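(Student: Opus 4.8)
The plan is to extract the principal symbol by testing $A$ against highly oscillatory inputs concentrated near a point, a standard device (the same one used already in this paper to read off the symbol of $\Lambda_{g,A,q}^{\rm loc}$ from \r{DNmap}, and in \cite[Ch.~VIII.7]{Taylor-book0}). Concretely, fix $(x_0,\xi_0)$ with $|\xi_0|=1$, pick a cutoff $\chi\in C_0^\infty(\R^n)$ supported near $x_0$ with $\chi\equiv 1$ on a smaller neighborhood and with $x_0\in K$ (if $x_0\notin K$ both sides vanish and there is nothing to prove), and set
\[
f_\lambda(x) = \lambda^{n/2}e^{\i\lambda x\cdot\xi_0}\,\chi(x).
\]
The normalization is chosen so that $\|f_\lambda\|_{L^2}$ is bounded independently of $\lambda$, while $\|f_\lambda\|_{H^m}\le C\lambda^m$ for large $\lambda$. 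The key stationary-phase/symbol-calculus fact is that
\[
\big(Af_\lambda\big)(x) = \lambda^{n/2}e^{\i\lambda x\cdot\xi_0}\,\chi(x)\,\lambda^m p_m(x,\xi_0)\big(1+O(1/\lambda)\big)
\]
in, say, $L^\infty$, where the error is uniform over $x$ in any compact set and $\lambda\ge 1$; this is just the expansion of a \PDO\ applied to a WKB state, with the leading term coming from the principal symbol and the $O(\lambda^{m-1})$ corrections absorbing the lower-order symbols and the derivatives of $\chi$ (here one uses that $\chi\equiv 1$ near $x_0$ so the $\nabla\chi$ terms do not see the point $x_0$).

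Given this, **the execution** is immediate. Evaluate the displayed asymptotic at $x=x_0$ (or integrate $|Af_\lambda|^2$ against a bump at $x_0$), take absolute values, and use $\|Af_\lambda\|_{L^2}\le \|A\|_{H^m\to L^2}\|f_\lambda\|_{H^m}\le C\lambda^m\|A\|_{H^m\to L^2}$ to obtain
\[
\lambda^{n/2}\lambda^m |p_m(x_0,\xi_0)| \le C\lambda^m\|A\|_{H^m\to L^2}\cdot\lambda^{n/2} + C\lambda^{m-1}\cdot(\text{bounded}),
\]
where the first term on the right comes from a localized $L^2$ lower bound for $Af_\lambda$ near $x_0$ matching the $L^2$ upper bound, and the last term is the WKB remainder. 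Dividing by $\lambda^{n/2+m}$ and letting $\lambda\to\infty$ kills the remainder and yields $|p_m(x_0,\xi_0)|\le C\|A\|_{H^m\to L^2}$ with $C$ depending only on $K$ (through the fixed choice of $\chi$, which can be taken uniformly for $x_0$ ranging over a compact neighborhood of $K$). Finally, homogeneity of $p_m$ of degree $m$ upgrades this to $|p_m(x,\xi)|\le C|\xi|^m\|A\|_{H^m\to L^2}$ for all $\xi\ne0$, which is the claim.

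**The only real subtlety** — and it is minor — is making the $L^2$ lower bound for $Af_\lambda$ near $x_0$ rigorous and uniform, i.e., passing from ``$(Af_\lambda)(x)\approx \lambda^{m+n/2}p_m(x,\xi_0)$ pointwise'' to a genuine lower bound $\|Af_\lambda\|_{L^2(B(x_0,r))}\ge c\lambda^{m+n/2}|p_m(x_0,\xi_0)|$; this follows by continuity of $p_m$ in $x$ (shrinking $r$) together with the uniform-in-$x$ remainder estimate in the WKB expansion. One should also check that the constant in the WKB remainder depends only on finitely many derivatives of the symbol on the support of $\chi$ and on $\chi$ itself, hence only on $K$; this is routine from the standard expansion $Au(x)=(2\pi)^{-n}\int e^{\i x\cdot\xi}a(x,\xi)\hat u(\xi)\,\d\xi$ with $u=f_\lambda$, changing variables $\xi\mapsto\lambda\xi_0+\xi$ and Taylor-expanding $a$ in the frequency variable. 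No global ellipticity or further structure of $A$ is used, so the argument applies verbatim to the operators $\Lambda^*P_j\Lambda$ appearing in the proof of Theorem~\ref{thm_LR_stab}.
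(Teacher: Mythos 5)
Your proposal is the same device the paper uses --- test $A$ against $e^{\i\lambda x\cdot\xi_0}\chi$ and compare the symbol expansion of $Af_\lambda$ with the operator norm bound $\|Af_\lambda\|_{L^2}\le \|A\|_{H^m\to L^2}\|f_\lambda\|_{H^m}$ --- but there is a real difference in what each argument actually delivers. The paper's own proof takes a single cutoff $\chi$ that is $\equiv 1$ on a neighborhood of $K$, gets $\|Af_\lambda\|_{L^2}\ge \|p_m(\cdot,\xi_0)\|_{L^2}\lambda^m - C\lambda^{m-1}$, and sends $\lambda\to\infty$. This yields only an $L^2$-in-$x$ bound, $\|p_m(\cdot,\xi)/|\xi|^m\|_{L^2}\le C\|A\|_{H^m\to L^2}$, not the pointwise bound that the lemma literally states; the pointwise bound is only obtained later (and with a worse exponent) by interpolating against the a priori $C^k$ bounds, outside the lemma. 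Your version goes further: by localizing $\chi$ near the point $x_0$ you aim for the genuine pointwise estimate $|p_m(x_0,\xi_0)|\le C\|A\|_{H^m\to L^2}$ with the exact power, which is stronger than what the paper proves and, strictly speaking, what the lemma claims.

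The ideas behind your localization step are correct, but as written they are loose in a few places that should be tightened. First, the $\lambda^{n/2}$ normalization remark is inverted: $\|e^{\i\lambda x\cdot\xi_0}\chi\|_{L^2}=\|\chi\|_{L^2}$ is already $\lambda$-independent, and inserting $\lambda^{n/2}$ makes it grow; this cancels on both sides and so is harmless, but the stated motivation is wrong. Second, the displayed inequality compares a pointwise quantity $\lambda^{n/2+m}|p_m(x_0,\xi_0)|$ on the left with an $L^2$-norm estimate on the right; the correct statement is an $L^2$ inequality over $B(x_0,r/2)$ (where the shrunken cutoff is $1$), which then feeds into continuity of $p_m(\cdot,\xi_0)$. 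Third, the order of limits matters and is only implicit: one must first fix $r$ (so all constants in the WKB remainder and in $\|f_\lambda\|_{H^m}$, which involve derivatives of the shrunken $\chi$, are finite), take $\lambda\to\infty$ to kill the $O(\lambda^{m-1})$ remainder, and only then take $r\to 0$. One also needs to note that the ratio $\|\chi\|_{L^2(B(x_0,r))}/\|\chi\|_{L^2(B(x_0,r/2))}$ can be kept bounded by a dimensional constant uniformly in $r$ (e.g.\ $\le 2^{n/2}$); otherwise the constant in the conclusion would blow up as $r\to 0$. With these points spelled out, your argument gives the stated pointwise bound with $C$ depending only on $K$, which is actually a small improvement over what the paper's own proof establishes.
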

\begin{proof}
Take $f=e^{\i x\cdot\xi}\chi(x)$, where $\chi\in C_0^\infty$ equals $1$ in a neighborhood   $K$. Then for $x$ in a neighborhood of  $K$, $Pf(x) = e^{\i x\cdot\xi}(p_m(x,\xi)+r(x,\xi))$  with $r\in S^{m-1}$.  We have $|\xi|^m/C\le \|f\|_{H^m}\le C|\xi|^m$ for $|\xi|\ge1$. Therefore, for such $\xi$,
\[
C_1\|Pf\|_{L^2}/\|f\|_{H^m}\ge \|p_m(\cdot,\xi)/|\xi|^m\|_{L^2}  - C_2/|\xi|.
\]
Take the limit $|\xi|\to\infty$ to complete the proof. 
\end{proof}
We complete the proof of Theorem~\ref{thm_LR_stab} with the aid of Lemma~\ref{lemma_P}. We recover first the $L^2$-norms w.r.t.\ $x$ of $\mathcal{L}(x,\xi)$ uniformly in  $\xi$ (in fixed coordinates); we can choose $\mu=1$ then. Using standard interpolation estimates, we can estimate the $L^\infty$ norm of $\mathcal{L}(x,\xi)- \tilde{\mathcal{L}}(x,\xi)$ with $\mu<1$ in \r{QF}, using the a priori bounds on $g$ and $\tilde g$ in some $C^k$, $k\gg1$, which imply similar bounds on $\mathcal{L}$ and $\tilde{\mathcal{L}}$.
\end{proof}

\begin{remark}\label{rem2}
The symbol $\lambda_0$ can be computed. Since we do not use this formula, we will sketch the proof only. Using Green's formula, as in the proof of \cite[Prop.~2.1]{SU-JFA}, we can show that $2N_\mathcal{V}\cong D^*\Lambda$, where $\cong$ stands for equality  modulo lower order terms, and $N_\mathcal{V}$ is $N$ above with the subscript $\mathcal{V}$ indicating that it acts microlocally in that set.  
The same proof implies that $\Lambda^*$ is the DN map associated with the incoming solution, i.e., the one which starts from $\mathcal{U}$ microlocally and hist $\mathcal{U}$. Therefore, $\Lambda^*\cong 2N_\mathcal{U}D^{-1}$, where $N_\mathcal{U}$ now acts in $\mathcal{U}$. Those two identities and the Egorov's theorem imply  $\lambda_0= -4(\xi_n \circ\mathcal L)\xi_n$, where $\xi_n$ is the function defined in \r{N0}. 
\end{remark}


\section{Applications and Examples} \label{sec_ex}

We start with a partial but still general enough case. 
We follow \cite[\S 24.1]{Hormander3}. Let $M$ be a Lorentzian manifolds with a timelike boundary $\bo$. Assume that $t$ is a real valued smooth function on $M$ so that the level surfaces $t=\text{const.}$ are compact and spacelike. For every $a<b$,   the (compact) ``cylinder'' $M_{ab}=\{a\le t\le b\}$ (assuming $[a,b]$ is in the range of $t$) has a boundary consisting of the spacelike surfaces $t^{-1}(a)$, $t^{-1}(b)$ and $\bo\cap M_{ab}$ which intersect transversely. This is a generalization of $[0,T]\times\Omega$  in the Riemannian case. By \cite[Theorem~24.1.1]{Hormander3}, the following problem is well posed
\[
Pu=0\; \text{in $M$}, \quad u|_{t<0}=0,\quad  u|_{\bo}=f 
\]
with $f\in H^s(\bo)$, $s\ge1$, $f=0$ for $t<0$;  with a unique solution   $u\in H^s(M)$ vanishing for $t<0$. Moreover, the map $f\mapsto u$ is continuous. Then the Dirichlet-to-Neumann map $\Lambda_{g,A,q}$ defined as in \r{DN}, is well defined. 

Let $x_0\in U_0\Subset U\subset\bo$ be as in Theorem~\ref{bdrystability}. Let $\chi$ be a properly supported \PDO\ cutoff of order zero localizing near some timelike covector over $x_0\in U_0$. Since there is a globally defined time function, there are no periodic lightlike geodesics. Then $\chi\Lambda_{g,A,q}\chi$ can be taken as $\Lambda_{g,A,q}^{\rm loc}$ and Theorem~\ref{bdrystability} applies. If we know a priori that $\Lambda_{g,A,q}: H_{(0)}^1(\bo)\to L^2(\bo)$ is continuous, where the subscript $(0)$ indicates functions vanishing for $t=0$, then we can replace $\Lambda_{g,A,q}^{\rm loc}$ by $\Lambda_{g,A,q}$ in \r{delta} and therefore, in Theorem~\ref{bdrystability}.

Similarly, with suitable \PDO\ cutoffs $\chi_1$ and $\chi_2$, we can take $\Lambda_{g,A,q}^{\rm loc} = \chi_1 \Lambda_{g,A,q}\chi_2$, under the assumptions of Theorem~\ref{intstability}. And again, if we know that $\Lambda_{g,A,q}: H_{(0)}^1(\bo)\to L^2(\bo)$ is continuous, we can remove the cutoffs. The results with the cutoffs are actually stronger.

Some special subcases are discussed below. They recover and extend the uniqueness results in 
\cite{MR1004174,Ramm-Sj, Ramm_Rakesh_91,waters2014stable,  Salazar_13, Aicha_15, Bellassoued_BA_16}, and some of the stability results there. 
 Using the results in this paper with the support theorems about the light ray transform  in \cite{S-support2014, Siamak2016}, we can get new partial data results.

\begin{example}
Let $q$ be a unknown  potential but assume that the metric and the magnetic fields are known. Restrict the DN map to $M_{ab}$ for some $a<b$. Then we can recover $L_0q$ in a stable way as in Theorem~\ref{bdrystability} over all timelike geodesics intersecting the lateral boundary transversely at their endpoints. If $g$ is real-analytic, then we can apply the results in \cite{S-support2014} to recover $q$ in the set covered by those geodesics under an additional foliation condition. Note that in contrast, the results in \cite{Eskin2015} require $A$ and $q$ to be analytic in time. 
\end{example}

\begin{example}\label{ex2}
In the example above, assume that $g$ is Minkowski, and $M_{ab}= [0,T]\times\bar\Omega$ for  some bounded smooth $\Omega\subset\R^n$. By Theorem~\ref{bdrystability}, we can recover $L_1A$ and $L_0q$ over all lightlike geodesics (lines) $l_{z,\theta} = \{(t,x) = (s,z+s\theta);\; s\in\R\}$, $(z,\theta)\in \R^n\times S^{n-1}$, not intersecting the top and the bottom of the cylinder. By \cite{S-support2014}, we can recover $q$ in the set covered by those lines. By \cite{Siamak2016}, we can recover $A$ up to $\d\phi$, $\phi=0$ on $[0,T]\times\bo$ in that set as well. 

For example, if $\Omega$ is the ball $B(0,1)= \{x;\;|x|<1\}$, the DN map recovers uniquely $q$ and $A$, up to a gauge transform, in the cylinder  $[0,T]\times \bar B(0,1)$ with the upward characteristic cone with base $\{0\}\times B(0,1)$ and the downward with base $\{T\}\times B(0,1)$ removed, see Figure~\ref{DN_Lorentz_fig2}. If $T\le 2$, those two cones intersect; otherwise they do not but the result holds in both cases. This is the possibly reachable region from $[0,T]\times\bo$, thus the results are sharp since no information about the complement can be obtained by the finite speed of propagation. 

\begin{figure}[h!] 
  \centering
  \includegraphics[trim = 20mm 130mm 100mm 35mm, clip, scale=0.5]{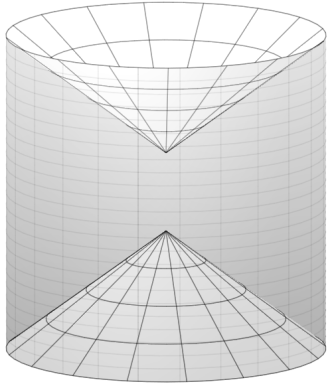}
\caption{\small The DN map, with $g$ Minkowski, on the lateral boundary of the cylinder determines a potential and a magnetic field up to $\d\phi$ inside the cylinder but outside  the two characteristic cones.}
\label{DN_Lorentz_fig2}
\end{figure}

This extends further the uniqueness part of the results in \cite{MR1004174,Ramm-Sj, Ramm_Rakesh_91,waters2014stable,  Salazar_13, Aicha_15, Bellassoued_BA_16}. Using the stability estimate in \cite{Begmatov01} about $L_0$, and the logarithmic estimate for $L_1$ in \cite{Salazar2014}, we can use Theorem~\ref{intstability} to recover the results in \cite{Salazar2014}. One important improvement however is that for uniqueness,   we do not assume that $A$ and $q$ are known outside $[0,T]$; or that $T=\infty$ because the uniqueness results in \cite{S-support2014, Siamak2016} do not require the function or the vector field to be compactly supported in time. 
\end{example}

\begin{example}
A partial data case of Example~\ref{ex2} is the following. Let $\Gamma\subset \bo$ be relatively open, and assume that $\bo$ is strictly convex.  Assume that we know the DN map for $f$ supported in $[0,T]\times\Gamma$, and we measure $\Lambda f$ there, as well. Then we can recover $q$ (for all $n\ge2$) and $A$ for $n\ge3$, up to a gauge transform, in the set covered by the lightlike lines hitting $[0,T]\times\bo$ in $[0,T]\times\Gamma$ at their both endpoints. When $n=2$, the recovery of $A$ up to a potential $\d\phi$ requires that if we know $L_1A$ for all some lightlike $l_{z,\theta}$, we also know it for $l_{z,-\theta}$, see  \cite{Siamak2016}, and this is the reason we restricted $n$ to $n\ge3$. Those local uniqueness results for the DN maps are new. 
\end{example}

\begin{example}
In a recent work \cite{Bellassoued_BA_16}, the authors study the inverse problem for the wave operator
\[
P := \partial_t^2 +a(t,x)\partial_t - \Delta+ b(t,x)
\]
with real valued $a$, $b$. The coefficient $b$ causes absorption. We do not restrict $A$ and $q$ to be real valued, so we can take $A=(\frac\i2 a(t,x),0,\dots,0)$ , $q=-\frac\i2\partial_t a(t,x)+b(t,x)$, then $P$ in \r{P} is the same as the one above. Then Theorem~\ref{intstability} proves unique recovery of $A$, $q$ up to the gauge transform $A\mapsto A-\d\psi$ with $\psi=0$ on $[0,T]\times\partial\Omega$. Since $A$ is restricted to the class of covector fields with spatial components zero, we must have $\psi=\psi(t)$. However, then $\psi=0$ for $x\in\partial\Omega$ implies $\psi\equiv0$. Therefore, the logarithmic and the double logarithmic stability estimates in  \cite{Bellassoued_BA_16} for $a$ and for $b$ which are about the DN map can be obtained by Theorem~\ref{intstability} combined with the stability estimates in  \cite{Begmatov01, Salazar2014}. We can get new uniqueness results however with partial data as in the previous examples. In the Riemannian case studied by Montalto \cite{Carlos_12} we can allow an absorption term as well to obtain, up to a gauge transform, stable recovery of a Riemannian simple metric in a generic class, a magnetic field, a potential and an absorption term from the DN map. 
\end{example}

\appendix

\section{A Linear Algebra Lemma}

In the appendix we prove a linear algebra lemma which is used several times in the proof above. Let $V$ be an $(n+1)$-dimensional vector space, $g$ be a Lorentzian metric on $V$. Fix a basis of $V$, the following coordinates are with respect to this basis. Let $h$ be a bilinear form on $V$, which can be identified with a matrix $(h^{jk})_{j,k=0}^{n}$. Let $A$ be a linear functional on $V$, which can be identified with a row vector $(A^0,\dots,A^n)$. Define a linear function $L$ by
$$L(\xi):=h(\xi,\xi)+A(\xi)= h^{jk}\xi_{j}\xi_{k} + A^j\xi_j, \quad\quad \xi=(\xi_0,\dots,\xi_n)\in V.$$
\begin{lemma}
For any open subset $W$ of $\{\xi\in V: g(\xi,\xi)=-1\}$, there exists $N:=\frac{(n+1)(n+2)}{2}+(n+1)$ vectors $\xi^{(1)},\dots,\xi^{(N)}$ such that
$$\sum^{n}_{j,k=0}|h^{jk}|^2 + \sum^{n}_{j=0}|A^j|^2 \leq C \max_{\xi\in \{\xi^{(1)},\dots,\xi^{(N)}\}}L(\xi)$$
for some constant $C>0$. 
\end{lemma}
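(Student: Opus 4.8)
The plan is to exploit the fact that the map $(h,A)\mapsto (L(\xi^{(1)}),\dots,L(\xi^{(N)}))$ is linear in the $N' := \frac{(n+1)(n+2)}{2}+(n+1)$ coefficients $(h^{jk})_{j\le k}$ and $(A^j)$, so that the claimed inequality is nothing but the statement that a suitable $N\times N'$ matrix has full column rank, together with the elementary observation that all norms on a finite-dimensional space are equivalent. Concretely, first I would note that $h$ enters $L$ only through its symmetric part, since $h^{jk}\xi_j\xi_k = h^{kj}\xi_j\xi_k$; so without loss of generality we may take $h$ symmetric, and then $(h,A)$ ranges over a space of dimension exactly $N'$. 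The evaluation functionals $\ell_i : (h,A)\mapsto L(\xi^{(i)})$ are linear on this $N'$-dimensional space, and the quantity $\max_i L(\xi^{(i)})$ controls $\max_i |L(\xi^{(i)})|$ up to replacing the list $\{\xi^{(i)}\}$ by $\{\xi^{(i)}\}\cup\{-\xi^{(i)}\}$ — but note $L(-\xi)$ differs from $L(\xi)$ only in the linear term, so it is cleaner to instead argue that $\max_i L(\xi^{(i)}) \ge \tfrac12\max_i |L(\xi^{(i)})|$ is false in general; rather, I would split $L$ into its quadratic part $Q(\xi)=h(\xi,\xi)$ and linear part $A(\xi)$, use $\xi$ and $-\xi$ to separate them ($L(\xi)+L(-\xi)=2Q(\xi)$, $L(\xi)-L(-\xi)=2A(\xi)$), and thereby bound $\max(|Q(\xi^{(i)})|,|A(\xi^{(i)})|)$ by $\max_{\pm}L(\pm\xi^{(i)})$ after doubling the number of vectors. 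This is exactly why the count $N$ in the lemma is (presumably) twice the dimension $N'$ up to constants, or handled by allowing both signs.

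Next I would construct the vectors explicitly. The key sub-step is: show that one can choose $\xi^{(1)},\dots,\xi^{(N')}$ inside the prescribed open set $W\subset\{g(\xi,\xi)=-1\}$ such that the linear functionals $\ell_i$ span the dual of the $N'$-dimensional coefficient space; equivalently, no nonzero $(h,A)$ can satisfy $Q(\xi^{(i)})=0$ and $A(\xi^{(i)})=0$ for all $i$. For the quadratic part this is the statement that a nonzero symmetric bilinear form cannot vanish on an open subset of the (nondegenerate) quadric $\{g=-1\}$ — which follows because the restriction of $Q$ to this quadric is a real-analytic function, and the quadric is connected (or at least each component is a real-analytic manifold), so vanishing on an open set forces vanishing identically on that component, hence $Q\equiv cg$ for a constant $c$ by polarization/homogeneity arguments; combined with $Q(\xi)=-c$ on the quadric this gives $c=0$. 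For the linear part $A$, the same analyticity/Zariski-density argument on $W$ shows $A$ restricted to the quadric cannot vanish on an open set unless $A=0$. So a generic choice of finitely many points of $W$ — and $N'$ of them suffice by the standard "pick points one at a time to cut down the common kernel" argument — yields functionals with trivial common kernel, i.e. spanning.

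Once the $\ell_i$ span, the linear map $(h,A)\mapsto (\ell_1,\dots,\ell_{N'})$ is an isomorphism of the coefficient space onto $\R^{N'}$, so its inverse is bounded: $\sum|h^{jk}|^2+\sum|A^j|^2 \le C\sum_i |L(\xi^{(i)})|^2 \le C N' \max_i |L(\xi^{(i)})|^2$. Finally, I would upgrade $\max_i|L(\xi^{(i)})|$ to $\max$ over $L$ itself (no absolute value) by including both $\xi^{(i)}$ and $-\xi^{(i)}$ in the list and using the identities $2Q(\xi^{(i)}) = L(\xi^{(i)})+L(-\xi^{(i)})$, $2A(\xi^{(i)}) = L(\xi^{(i)})-L(-\xi^{(i)})$, each of which is $\le 2\max_\pm L(\pm\xi^{(i)})$ in absolute value — more precisely $|L(\xi)| \le \max(L(\xi),L(-\xi)) + |L(-\xi)|$ is circular, so instead I would simply observe $|a|\le \max(a,-a)$ and $|b|\le\max(b,-b)$ applied to $a=Q$, $b=A$, each expressed via the sum/difference above, giving $|Q(\xi^{(i)})|,|A(\xi^{(i)})| \le 2\max(L(\xi^{(i)}),L(-\xi^{(i)}))$; then $|L(\xi^{(i)})|\le |Q|+|A|\le 4\max(L(\xi^{(i)}),L(-\xi^{(i)}))$, which suffices after relabeling. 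The doubled list has $2N' = (n+1)(n+2)+2(n+1) = 2N$... which is slightly more than the stated $N$; I would reconcile this by noting $P_0=1$-type redundancies or by a sharper choice, but for the purposes of the inequality the precise constant $N$ is immaterial and I would simply state that $N$ vectors (allowing $\le$) suffice.

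The main obstacle I expect is the density/analyticity step — establishing rigorously that a nonzero pair $(h,A)$ (with $h$ symmetric) cannot have $L$ vanishing on a nonempty open subset $W$ of the quadric, and that therefore $N'$ generically-chosen points of $W$ give functionals with trivial common kernel. Everything else is linear algebra and equivalence of norms on a finite-dimensional space. The cleanest way to handle the obstacle is via the real-analyticity of the quadric and of $Q|_{\{g=-1\}}$, $A|_{\{g=-1\}}$, plus the identity principle on connected real-analytic manifolds, followed by homogeneity to conclude $Q\in\R g$ and hence $Q=0$, $A=0$.
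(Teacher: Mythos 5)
Your proposal follows the same route as the paper: treat each $L(\xi)$ as a linear functional on the $N$-dimensional space of pairs $(h,A)$ with $h$ symmetric, choose finitely many $\xi^{(i)}\in W$ whose functionals span the dual space, and invert the resulting linear isomorphism using equivalence of norms in finite dimensions. What you add is the justification of the spanning ("completeness") claim via analyticity/Zariski-density of $W$ in the irreducible quadric $\{g=-1\}$ together with a degree count forcing $L\in(g+1)\cdot\mathbb{R}[\xi]$ to vanish -- a step the paper simply asserts with ``As $\{L(\xi):\xi\in W\}$ is a complete set.'' Two small points: the condition for spanning is $L(\xi^{(i)})=0\ \forall i\ \Rightarrow\ (h,A)=0$, not the stronger pair of conditions $Q(\xi^{(i)})=0$ and $A(\xi^{(i)})=0$ you write; and the detour through $\pm\xi^{(i)}$ is both unnecessary and problematic (since $-\xi^{(i)}\notin W$ in general) -- the right reading is that the lemma's right-hand side should carry $\max_i|L(\xi^{(i)})|$, which is what inverting $\mathbf L$ actually gives and what every application in the paper uses.
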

\begin{proof}
For each $\xi\in V$, we may regard $L(\xi)$ as a linear functional of $(h,A)$:
$$
L(\xi): (h,A) \mapsto h^{jk}\xi_j\xi_k + A^j\xi_j.
$$
The pair $(h,A)$ belongs to a linear space that can be identified with $\mathbb{R}^N$, thus $L(\xi)$ belongs to $(\mathbb{R}^N)'\equiv\mathbb{R}^N$. As $\{L(\xi):\xi\in W\}$ is a complete set, there exists a basis $L(\xi^{(1)}), \dots, L(\xi^{(N)})$ in it. This basis of linear functionals forms an invertible linear transformation $\mathbf{L}$ on $\mathbb{R}^N$:
$$
\mathbf{L}: (h,A) \mapsto (L(\xi^{(1)}), \dots, L(\xi^{(N)})).
$$
Inverting $\mathbf{L}$ yields the estimate.
\end{proof}

Now we are ready to prove a similar lemma for general Lorentzian manifolds. 
\begin{lemma} \label{mainlemma}
Let $(M,g)$ be an $(n+1)$-dimensional Lorentzian manifolds with continuous metric. Fix $x_0\in M$ and local coordinates $(x^0,\dots,x^n)$ near $x_0$. Let $h=h^{jk}(x)\frac{\partial}{\partial x^j}\otimes\frac{\partial}{\partial x^k}$ be a continuous symmetric 2-tensor field; let $A=A^j(x)\frac{\partial}{\partial x^j}$ be a continuous vector field. Define a functional on $T_xM$:
$$
L_x(\xi): (h(x),A(x)) \mapsto h^{jk}(x)\xi_j\xi_k + A^j(x)\xi_j, \quad\quad \xi\in T^{\ast}_{x}M.
$$
Then for any open subset $W$ of $\{\xi:g_x(\xi,\xi)=-1\}\subset T^{\ast}M$, there exists a neighborhood $U$ of $x_0$ in $M$ and $N:=\frac{(n+1)(n+2)}{2}+(n+1)$ codirections $\xi^{(1)}(x), \dots, \xi^{(N)}(x)\in W$ such that
\begin{equation} \label{algebra}
\sum^{n}_{j,k=0}|h^{jk}(x)|^2 + \sum^{n}_{j=0}|A^j(x)|^2 \leq C \max_{\xi(x)\in \{\xi^{(1)}(x),\dots,\xi^{(N)}(x)\}}L_x(\xi(x)), \quad\quad \text{ for all } x\in U.
\end{equation}
Here $C>0$ is a positive constant independent of $x$.
\end{lemma}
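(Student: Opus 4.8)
\noindent The plan is to reduce everything to the pointwise linear algebra lemma proved just above; the only additional work is to make the choice of codirections \emph{uniform} for $x$ in a neighborhood of $x_0$, which is a short openness and continuity argument.

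First I would apply the preceding (pointwise) lemma at $x=x_0$, using the Lorentzian form $g_{x_0}$ on $T_{x_0}^{\ast}M$ in the fixed coordinates and the open set $W_{x_0}:=\{\xi:(x_0,\xi)\in W\}$, which we may assume nonempty. This produces $N=\tfrac{(n+1)(n+2)}{2}+(n+1)$ covectors $\xi^{(1)}(x_0),\dots,\xi^{(N)}(x_0)\in W_{x_0}$ for which the linear map
\[
\mathbf{L}_{x_0}\colon (h,A)\longmapsto\bigl(L_{x_0}(\xi^{(1)}(x_0)),\dots,L_{x_0}(\xi^{(N)}(x_0))\bigr)
\]
on the $N$-dimensional space of pairs $(h,A)$ is invertible, i.e.\ $\det\mathbf{L}_{x_0}\neq0$. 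Next I would extend each $\xi^{(j)}(x_0)$ to a continuous section near $x_0$ that stays on the unit pseudosphere and inside $W$: keep the coordinate components frozen and renormalize,
\[
\xi^{(j)}(x):=\frac{\xi^{(j)}(x_0)}{\sqrt{-\,g_x\bigl(\xi^{(j)}(x_0),\xi^{(j)}(x_0)\bigr)}}.
\]
Since $g$ is continuous and $g_{x_0}(\xi^{(j)}(x_0),\xi^{(j)}(x_0))=-1<0$, the radicand is positive for $x$ in a neighborhood of $x_0$, so $\xi^{(j)}(x)$ is well defined and continuous there, lies on $\{g_x(\xi,\xi)=-1\}$, and converges to $\xi^{(j)}(x_0)$ as $x\to x_0$; because $W$ is open and $(x_0,\xi^{(j)}(x_0))\in W$, shrinking the neighborhood ensures $(x,\xi^{(j)}(x))\in W$ as well.

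Finally, the entries of $\mathbf{L}_x$ are polynomials in the components of $\xi^{(1)}(x),\dots,\xi^{(N)}(x)$, so $x\mapsto\det\mathbf{L}_x$ is continuous near $x_0$ and nonzero at $x_0$; hence on some neighborhood $U$ of $x_0$ one has $|\det\mathbf{L}_x|\ge c_0>0$, and therefore $\|\mathbf{L}_x^{-1}\|\le C$ uniformly for $x\in U$. Inverting,
\[
\bigl(h(x),A(x)\bigr)=\mathbf{L}_x^{-1}\bigl(L_x(\xi^{(1)}(x)),\dots,L_x(\xi^{(N)}(x))\bigr),
\]
gives $\bigl|(h(x),A(x))\bigr|\le C\sqrt{N}\,\max_{1\le j\le N}|L_x(\xi^{(j)}(x))|$ for all $x\in U$, which is \eqref{algebra} (with the absolute value understood on the right, as is the quantity actually controlled in the applications). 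There is no real obstacle here: the only thing needing care is precisely the uniformity — keeping the $\xi^{(j)}$ inside the open set $W$ while $x$ varies, handled by the renormalization above, and the persistence of invertibility of $\mathbf{L}_x$ on a whole neighborhood, immediate from continuity of the determinant. All genuine content sits in the already-established pointwise lemma.
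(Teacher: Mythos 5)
Your proof is correct and takes the same route as the paper: apply the pointwise linear algebra lemma at $x_0$, then use continuity of $\det\mathbf{L}_x$ to propagate invertibility to a neighborhood. Where you are somewhat more careful than the paper is in making the covector choice genuinely a continuous section: the paper writes ``there exist $\xi^{(1)}(x),\dots,\xi^{(N)}(x)\in T_xM\cap W$'' and then asserts continuity in $x$ without explaining how the $\xi^{(j)}(x)$ vary with $x$ so that they stay both on $\{g_x(\xi,\xi)=-1\}$ and inside $W$. Your renormalization $\xi^{(j)}(x)=\xi^{(j)}(x_0)/\sqrt{-g_x(\xi^{(j)}(x_0),\xi^{(j)}(x_0))}$ is exactly the explicit construction that fills this small gap, and the observation that the right-hand side of \eqref{algebra} should be read with an absolute value (since the left side is a sum of squares) is a legitimate correction to the paper's statement rather than an error in your argument. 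Same approach, written more precisely.
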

\begin{proof}
For any $x\in M$, the proof of the previous lemma shows there exist $\xi^{(1)}(x), \dots, \xi^{(N)}(x)$ in $T_xM\cap W$ such that $L(\xi^{(1)}(x)), \dots, L(\xi^{(1)}(x))$ is a basis of linear functionals on $\mathbb{R}^N$, and the estimate \eqref{algebra} holds at $x$. In particular this is true at $x=x_0$. Since each $L(\xi^{(k)}(x))$ depends on $x$ continuously, we conclude that the linear transformation
$$\mathbf{L}_x: (h(x),A(x)) \mapsto (L_x(\xi^{(1)}(x)), \dots, L_x(\xi^{(N)}(x)))$$
is continuously invertible in a neighborhood $U$ of $x_0$. Shrinking $U$ if necessary, we may assume the closure $\overline{U}$ is compact. Finally taking $C=\sup_{x\in\overline{U}}\|\mathbf{L}^{-1}\|$ completes the proof.
\end{proof}

\bibliographystyle{plain}

\end{document}